\newcommand{\Rl}{\mathbb{R}}
\newcommand{\Cplx}{\mathbb{C}}
\newcommand{\Itgr}{\mathbb{Z}}
\newcommand{\Ntrl}{\mathbb{N}}
\newcommand{\Circ}{\mathbb{T}}
\newcommand{\Ac}{\mathcal{A}}
\newcommand{\Kc}{\mathcal{K}}
\newcommand{\Lc}{\mathcal{L}}
\newcommand{\Sc}{\mathcal{S}}
\newcommand{\Tr}{\mathrm{Tr}}
\newcommand{\Vol}{\mathrm{Vol}}
\newcommand{\diag}{\mathrm{diag}}
\def\Xint#1{\mathchoice
{\XXint\displaystyle\textstyle{#1}}%
{\XXint\textstyle\scriptstyle{#1}}%
{\XXint\scriptstyle\scriptscriptstyle{#1}}%
{\XXint\scriptscriptstyle\scriptscriptstyle{#1}}%
\!\int}
\def\XXint#1#2#3{{\setbox0=\hbox{$#1{#2#3}{\int}$ }
\vcenter{\hbox{$#2#3$ }}\kern-.6\wd0}}
\def\qint{\Xint-}
\newcommand{\re}{\mathrm{Re}}
\newcommand{\res}{\mathrm{Res}}
\newtheorem{theorem}{Theorem}[section]
\newtheorem{proposition}[theorem]{Proposition}
\newtheorem{corollary}[theorem]{Corollary}
\newtheorem{definition}[theorem]{Definition}
\newtheorem{lemma}[theorem]{Lemma}
\newtheorem{condition}[theorem]{Condition}
\theoremstyle{remark}
\newtheorem{example}[theorem]{Example}
\newtheorem{remark}[theorem]{Remark}
\title[Semiclassical Weyl law for noncommutative tori]{Semiclassical Weyl law and exact spectral asymptotics in noncommutative geometry}
\date{\today}
\author{E.~McDonald}
\author{F.~Sukochev}
\author{D.~Zanin}
\begin{document}
\maketitle{}

\begin{abstract}
    We prove a Tauberian theorem for singular values of noncommuting operators which allows us to prove exact asymptotic formulas in noncommutative geometry at
    a high degree of generality. We explain how, via the Birman--Schwinger principle, these asymptotics imply that a semiclassical Weyl law holds
    for many interesting noncommutative examples. In Connes' notation for quantized calculus, we prove that for a wide class
    of $p$-summable spectral triples $(\Ac,H,D)$ and self-adjoint $V \in \Ac$, there holds
    \[
        \lim_{h\downarrow 0} h^p\Tr(\chi_{(-\infty,0)}(h^2D^2+V)) = \qint V_-^{\frac{p}{2}}|ds|^p.
    \]
\end{abstract}

\section{Introduction}  
If $H$ is a Hilbert space, denote by $\Lc_{\infty}(H)$ the algebra of all bounded linear endomorphisms of $H$, and by $\Kc(H)$ the ideal of all compact
operators. The weak-trace class ideal $\Lc_{1,\infty}(H)$ consists of those compact operators whose singular value sequence $\mu(T)$ belongs to the weak $\ell_1$-space $\ell_{1,\infty}.$ That is,
\[
    \Lc_{1,\infty}(H) := \{T\in \Kc(H)\;:\; \sup_{t\geq 0} t\mu(t,T) < \infty\}.
\]
We recall definitions and notations relating to operator ideals and singular values in Section \ref{notation_section} below. For brevity we denote $\Lc_{\infty}=\Lc_{\infty}(H)$, $\Kc = \Kc(H),$ and so on,
when there is no possibility of confusion.

A trace on $\Lc_{1,\infty}$ is a functional $\varphi:\Lc_{1,\infty}\to \Cplx$ such that $\varphi(UBU^*) = \varphi(B)$
for all unitary operators $U$ and $B\in \Lc_{1,\infty}.$ Equivalently, $\varphi(AB) = \varphi(BA)$ for all $A \in \Lc_{\infty}$
and $B \in \Lc_{1,\infty}.$ A trace $\varphi$ is said to be normalised if $\varphi(\diag\{\frac{1}{n+1}\}_{n=0}^\infty) = 1$
and positive if $\varphi(A)\geq 0$ for all $0\leq A \in \Lc_{1,\infty}.$ The most famous traces on $\Lc_{1,\infty}$ are the Dixmier traces $\Tr_\omega$ \cite[Chapter 6]{LSZ2012}. All traces on $\Lc_{1,\infty}$ are singular, meaning
that they vanish on finite rank operators.

One of the seminal applications of singular traces is Connes' integration formula \cite[Chapter VI, Formula 2]{Connes1994}, of which one version is the following:
\begin{theorem}[Connes integration formula]\label{cif_manifold}
    Let $d > 1$ and let $(X,g)$ be a $d$-dimensional closed Riemannian manifold with volume form $\nu_g.$ If $f \in C(X)$ and if $\varphi$ is a positive normalised trace on $\Lc_{1,\infty}$, then
    \[
        \varphi((1-\Delta_g)^{-\frac{d}{4}}M_f(1-\Delta_g)^{-\frac{d}{4}}) = \frac{\Vol(S^{d-1})}{d(2\pi)^d}\int_{X} f\,d\nu_g.
    \]
    Here, $M_f$ denotes the operator on $L_2(X,\nu_g)$ of pointwise multiplication by the function $f,$ and $\Delta_g$ is the Laplace-Beltrami operator associated to $g.$
\end{theorem}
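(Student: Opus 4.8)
The plan is to reduce the asserted identity to the singular value asymptotics of a single positive operator and then feed those asymptotics into a Tauberian argument. Both sides are $\Cplx$-linear in $f$, and since every positive trace on $\Lc_{1,\infty}$ is bounded, the left-hand side depends continuously on $f\in C(X)$ (using $(1-\Delta_g)^{-d/4}\in\Lc_{2,\infty}$, which follows from Weyl's law for $\Delta_g$, together with $\Lc_{2,\infty}\cdot\Lc_{2,\infty}\subseteq\Lc_{1,\infty}$); the right-hand side is visibly continuous. So it suffices to prove the formula for $f$ in $C^\infty(X)$ (dense in $C(X)$), and since any real $h\in C^\infty(X)$ can be written as $\tfrac12|\sqrt{c+h}|^2-\tfrac12|\sqrt{c-h}|^2$ for $c>\|h\|_\infty$, by further linearity it suffices to treat $f=|g|^2$ with $g\in C^\infty(X)$. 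For such $f$ one has $M_f=M_g^*M_g$, so with $S:=M_g(1-\Delta_g)^{-d/4}$ the operator in question is $S^*S$, and the trace property gives $\varphi(S^*S)=\varphi(SS^*)=\varphi\big(M_g(1-\Delta_g)^{-d/2}M_g^*\big)\geq0$; in particular the quantity to be computed equals $\varphi(T_f)$ with $T_f:=(1-\Delta_g)^{-d/4}M_f(1-\Delta_g)^{-d/4}\geq 0$.

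The next step is to pin down the singular value asymptotics of $T_f$. The operator $T_f$ is a classical, positive pseudodifferential operator of order $-d$ on the closed $d$-manifold $X$ (complex powers of elliptic operators and multiplications are classical, and the class is closed under composition), with leading symbol $f(x)|\xi|_g^{-d}$, where $|\xi|_g^2=g^{ij}(x)\xi_i\xi_j$. The Weyl asymptotics for such operators (H\"ormander; equivalently the Birman--Solomyak asymptotics for the spectrum of pseudodifferential operators) give, as $\lambda\downarrow 0$,
\[
  \#\{\,n:\mu(n,T_f)>\lambda\,\}=\frac{1}{(2\pi)^d}\int_X \mathrm{vol}\{\xi\in T_x^*X:f(x)|\xi|_g^{-d}>\lambda\}\,d\nu_g(x)+o(\lambda^{-1}),
\]
where $\mathrm{vol}$ is the fibrewise volume normalised so that the unit $g$-ball has volume $\Vol(B^d)$. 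Since $\{f(x)|\xi|_g^{-d}>\lambda\}=\{|\xi|_g<(f(x)/\lambda)^{1/d}\}$ has $\mathrm{vol}$ equal to $\Vol(B^d)f(x)\lambda^{-1}$, and $\Vol(B^d)=\Vol(S^{d-1})/d$, the right-hand side equals $c_f\lambda^{-1}+o(\lambda^{-1})$ with
\[
  c_f=\frac{\Vol(S^{d-1})}{d(2\pi)^d}\int_X f\,d\nu_g ,
\]
and inverting this counting asymptotic gives $\mu(n,T_f)=c_f/n+o(1/n)$. If one prefers not to quote the sharp Weyl law, the same conclusion follows from a partition-of-unity argument: localise $f$ into small coordinate charts, freeze the metric, compare with the explicit flat model on a torus (where $(1-\Delta)^{-d/2}$ has eigenvalues $(1+|m|^2)^{-d/2}$, $m\in\Itgr^d$, whose counting function is manifestly $\sim\Vol(B^d)\lambda^{-1}$ times the chart volume), and control the commutator and frozen-coefficient errors by Dirichlet--Neumann bracketing.

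Finally I would convert the eigenvalue asymptotics into the trace value. Write $T_f=\sum_n\lambda_nP_n$ with $\lambda_n=\mu(n,T_f)$ and $P_n$ rank-one spectral projections, and set $B:=c_f\sum_n\frac1{n+1}P_n$; then $B$ is unitarily equivalent to $c_f\diag\{\frac1{n+1}\}$, so $\varphi(B)=c_f$ by positivity and normalisation. The difference $T_f-B$ is diagonal in the same basis with eigenvalues $\lambda_n-c_f/(n+1)=o(1/n)$, so its singular values are $o(1/n)$ as well (a decreasing rearrangement of an $o(1/n)$ sequence is again $o(1/n)$), i.e.\ $T_f-B$ lies in the closure of the finite-rank operators in $\Lc_{1,\infty}$; being positive, $\varphi$ is continuous, and being singular it annihilates the finite-rank operators, hence $\varphi(T_f-B)=0$. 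Therefore $\varphi(T_f)=\varphi(B)=c_f=\frac{\Vol(S^{d-1})}{d(2\pi)^d}\int_X f\,d\nu_g$, which is the asserted formula. This last step is exactly the Tauberian principle that a positive operator with $\mu(n,\cdot)\sim c/n$ has $\varphi$-value $c$ for every normalised trace.

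I expect the main obstacle to be the second step --- obtaining the asymptotics with the \emph{exact} constant $c_f$. Boundedness of $n\mu(n,T_f)$ is immediate from operator-ideal H\"older estimates, but identifying the limit $c_f$ requires either the full semiclassical Weyl law for the variable-coefficient elliptic operator $T_f$, or a careful localisation-and-bracketing argument in which one must verify that the commutators $[M_{\chi_j},(1-\Delta_g)^{-d/4}]$ and the discrepancies between $g$ and its frozen approximations contribute only $o(1/n)$ to the singular values. If one is content to invoke Connes' trace theorem for Dixmier traces, then only the upgrade from Dixmier traces to all normalised traces remains, which is precisely where a Tauberian theorem of the kind developed in this paper is used.
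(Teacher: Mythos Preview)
The paper does not give its own proof of Theorem~\ref{cif_manifold}: it is quoted as a classical background result with a reference to \cite{Connes1994}, and the discussion immediately following it remarks that the stronger exact asymptotic $\lim_{t\to\infty}t\mu(t,(1-\Delta_g)^{-d/4}M_f(1-\Delta_g)^{-d/4})=c_f$ is already known from Birman--Solomyak \cite{BS1971} and ``is stronger than $\mathcal{PT}$-measurability and implies Theorem~\ref{cif_manifold}''. Your argument is correct and follows exactly this route: establish the sharp singular-value asymptotic for the order~$-d$ pseudodifferential operator $T_f$ via classical Weyl theory, then deduce $\varphi(T_f)=c_f$ from the elementary fact that a positive operator with $\mu(n,T)\sim c/n$ has $\varphi(T)=c$ for every positive normalised trace on $\Lc_{1,\infty}$.

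Two minor comments. First, your reduction to $f=|g|^2$ with smooth $g$ is harmless but unnecessary: for smooth $f\geq0$ one already has $T_f\geq0$, and linearity plus density in $C(X)$ handles the rest. Second, your closing remark slightly misidentifies the role of this paper's Tauberian machinery. The noncommutative Tauberian theorem proved here (Theorem~\ref{main_analytic_theorem} and Corollary~\ref{tauberian_corollary}) is designed for the situation where one must pass from the zeta function $\Tr(A^zB^z)$ to the asymptotics of $B^{p/2}A^pB^{p/2}$ with $A,B$ \emph{not} commuting; it is precisely what replaces the Birman--Solomyak pseudodifferential argument in the abstract spectral-triple setting of Theorem~\ref{main_spectral_triple_theorem}. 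For the commutative Theorem~\ref{cif_manifold} no such device is needed, and the short argument you give in your final step (comparing $T_f$ with $c_f\diag\{1/(n+1)\}$ in its own eigenbasis) is the full Tauberian content required.
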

In particular, this holds when $\varphi$ is a Dixmier trace.
We caution the reader that the statement of Theorem \ref{cif_manifold} is visually different to Connes' integration formula as typically stated,
which is that
\[
    \varphi(M_f(1-\Delta_g)^{-\frac{d}{2}}) = \frac{\Vol(S^{d-1})}{d(2\pi)^d}\int_{X} f\,d\nu_g.
\]
However, the statement in Theorem \ref{cif_manifold} is equivalent. We use the theorem in ``symmetrised" form because then the operator inside
the trace is positive when $f$ is positive which has certain advantages for our purposes here, and because this form of the theorem is more easily generalised to less regular $f,$ see e.g. \cite{LSZ2020}.

A trivial consequence of Connes' integration formula is that all positive normalised traces on $\Lc_{1,\infty}$ take the same value on the operator 
$(1-\Delta_g)^{-\frac{d}{4}}M_f(1-\Delta_g)^{-\frac{d}{4}}.$ In the terminology of \cite{SSUZ2015}, we say that the operator $(1-\Delta_g)^{-\frac{d}{4}}M_f(1-\Delta_g)^{-\frac{d}{4}}$ is $\mathcal{PT}$-measurable.

It is proved in \cite[Proposition 7.2]{SSUZ2015} that a positive operator $T\in \Lc_{1,\infty}$ is $\mathcal{PT}$-measurable with $\varphi(T)=c$ for all positive normalised traces $\varphi$
if and only if
\[
    \lim_{n\to\infty}\frac{1}{n+1}\sum_{k=2^{m}}^{2^{n+m+1}-2} \mu(k,T) = c\cdot\log(2),\quad \text{ uniformly in }m\geq 0.
\]
Hence, when $f\geq 0$, Theorem \ref{cif_manifold} is equivalent to a certain statement on the asymptotic properties of the spectrum of the operator 
\[
    T = (1-\Delta_g)^{-\frac{d}{4}}M_f(1-\Delta_g)^{-\frac{d}{4}}.
\]
For this particular operator even more precise spectral asymptotics are known. For smooth positive
$f$ there holds the exact asymptotic formula,
\[
    \lim_{t\to\infty} t\mu(t,(1-\Delta_g)^{-\frac{d}{4}}M_f(1-\Delta_g)^{-\frac{d}{4}}) = \frac{\Vol(S^{d-1})}{d(2\pi)^d}\int_{X} f\,d\nu_g.
\]
Asymptotics of this type have been known for some time. See, for example, the work of Birman and Solomyak \cite{BS1971}. This is stronger than $\mathcal{PT}$-measurability and implies Theorem \ref{cif_manifold}.
For more recent work, see Rozenblum and Shargorodsky \cite{RozenblumShargorodsky2020}. In a recent preprint, Rozenblum \cite{Rozenblum2021} pointed out the connection between these Weyl-type asymptotics and Connes' integration formula.

\subsection{The integration functional on spectral triples}
In noncommutative geometry, Theorem \ref{cif_manifold} motivates the definition of the integration functional associated to an $\Lc_{p,\infty}$-summable spectral triple $(\Ac,H,D)$.
Here, $H$ is a Hilbert space, $D$ is a self-adjoint unbounded operator on $H$ and $\Ac$ is a unital sub-$*$-algebra of $\Lc_{\infty}(H).$
obeying certain conditions (see Definition \ref{spectral_triple_definition} below). The terminology is due to Connes \cite{Connes1994,Connes1995}. A spectral triple $(\Ac,H,D)$ is said to be $\Lc_{p,\infty}$-summable for $p>0$ if
\[
    (D+i)^{-1} \in \Lc_{p,\infty}(H).
\]
In the notation of Connes, $\qint$ denotes a positive normalised trace and $|ds| := (1+D^2)^{-\frac12}$ \cite{Connes1995,Connes2000}.
Given a normalised trace $\varphi$ on $\Lc_{1,\infty}$, an integration functional is defined by
\[
    \qint a |ds|^p := \varphi(a(1+D^2)^{-\frac{p}{2}}).
\] 
Unlike in the commutative case, there is no guarantee that the limit
\[
    \lim_{t\to\infty} t\mu(t,(1+D^2)^{-\frac{p}{4}}a(1+D^2)^{-\frac{p}{4}})
\]
even exists, let alone provides an additive functional on the cone of positive elements of $\Ac.$
The following theorem, which is one of the main results of this paper, is that the existence of the above limit is not particular to the commutative case, and in fact follows from a natural condition
on the spectral triple $(\Ac,H,D)$ which is easily verified in many interesting noncommutative examples.
\begin{theorem}\label{main_spectral_triple_theorem}
    Let $(\Ac,H,D)$ be an $\Lc_{p,\infty}$-summable spectral triple, where $p>2.$ Assume that $(\Ac,H,D)$ is $QC^1$ (see Definition \ref{spectral_triple_definition} below). 
    
    If, for every $0\leq a \in \Ac$ there exists $c \in \Rl$ such that the function
    \[
        z\mapsto \Tr(a^z(1+D^2)^{-\frac{z}{2}})-\frac{c}{z-p},\quad \re(z)>p
    \]
    extends continuously to the closed half-plane $\re(z)\geq p$, then for every $0\leq b \in \overline{\Ac}$ (the $C^*$-closure of $\Ac$) there exists the limit
    \[
        \lim_{t\to\infty} t\mu(t,(1+D^2)^{-\frac{p}{4}}b(1+D^2)^{-\frac{p}{4}}).
    \]
\end{theorem}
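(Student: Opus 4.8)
The idea is to view the hypothesis as a statement about the behaviour of a Dirichlet-type series and to feed it into a Tauberian theorem. Concretely, fix $0 \le a \in \Ac$ and set $T_a := (1+D^2)^{-p/4} a (1+D^2)^{-p/4}$, a positive operator in $\Lc_{1,\infty}$ by the summability hypothesis. The quantity $\Tr(a^z (1+D^2)^{-z/2})$ is, up to the noncommutativity issue, a regularised $\zeta$-function of $T_a$; the hypothesis says it has a meromorphic-type continuation with a simple pole at $z=p$ of residue $c$ and is otherwise continuous up to $\re(z)=p$. I would first show that $c \ge 0$ (it is a residue of a function that is positive for real $z > p$) and then invoke a Tauberian theorem — the abstract one the abstract promises, in the style of the Hardy--Littlewood/Wiener--Ikehara theorems adapted to singular values — to conclude that
\[
    \lim_{t\to\infty} t\,\mu(t, T_a) = \frac{c}{p}\cdot(\text{normalising constant}),
\]
i.e.\ that the limit exists \emph{for $a \in \Ac$ itself}. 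The $QC^1$ hypothesis enters here: it is what lets one replace $\Tr(a^z(1+D^2)^{-z/2})$, whose $z$-dependence mixes $a$ and $D$ noncommutatively, by something comparable to $\Tr(T_a^{z/p})$ up to a term holomorphic across $\re(z)=p$ — commutator estimates $[a,(1+D^2)^{-1}] \in \Lc_{p/2,\infty}$ (or a smoothing variant) control the error, since for $p>2$ the relevant ideal is strictly inside $\Lc_{1,\infty}$ and contributes nothing to the leading singular-value asymptotics.

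The second half is the passage from $a \in \Ac$ to $b \in \overline{\Ac}$. Here I would argue by a density/continuity argument at the level of the functional $\omega(b) := \varlimsup_{t\to\infty} t\,\mu(t, (1+D^2)^{-p/4} b (1+D^2)^{-p/4})$ and its $\varliminf$ counterpart. For $0 \le b \in \overline{\Ac}$ choose $0 \le a_n \in \Ac$ with $\|b - a_n\|_\infty \to 0$; we may arrange $0 \le a_n \le b + \varepsilon_n$. The key estimate is the Hölder-type bound
\[
    t\,\mu(t, (1+D^2)^{-p/4} c\, (1+D^2)^{-p/4}) \le \|c\|_\infty \cdot \|(1+D^2)^{-1}\|_{\Lc_{p,\infty}}^{p},
\]
valid for $0 \le c \in \Lc_\infty$, which shows the functionals $\omega$ and $\underline{\omega}$ are Lipschitz in $\|\cdot\|_\infty$ on the positive cone with a uniform constant. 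Combined with monotonicity ($0 \le c_1 \le c_2 \Rightarrow \mu(t,(1+D^2)^{-p/4}c_1(1+D^2)^{-p/4}) \le \mu(t,(1+D^2)^{-p/4}c_2(1+D^2)^{-p/4})$) and the fact that $\omega = \underline{\omega}$ on $\Ac$ by the first part, a standard $3\varepsilon$-argument gives $\omega(b) = \underline{\omega}(b)$, i.e.\ the limit exists for every such $b$.

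The main obstacle I anticipate is the first step: extracting genuine singular-value asymptotics, not merely measurability, from the hypothesis on the $\zeta$-function — an honest Tauberian theorem with a remainder, rather than an Abelian/Cesàro statement. Ikehara-type theorems are delicate precisely because they require the continuation to hold \emph{up to and including} the critical line and need the error term to be continuous (not just bounded) there; this is exactly why the hypothesis is phrased as ``extends continuously to $\re(z) \ge p$'' rather than merely ``is bounded''. Making this rigorous for the possibly-noncommutative object $\Tr(a^z(1+D^2)^{-z/2})$ is where the real work lies, and I expect it is the content of the ``Tauberian theorem for singular values of noncommuting operators'' advertised in the abstract; the reduction of that object to a true spectral $\zeta$-function of $T_a$ via $QC^1$ commutator bounds is the technical glue. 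The restriction $p>2$ is what gives this glue room to work, since the commutator error lands in an ideal strictly smaller than $\Lc_{1,\infty}$.
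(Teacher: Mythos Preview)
Your plan is correct and essentially matches the paper's approach: the paper proves a noncommutative Wiener--Ikehara theorem (Theorem~\ref{main_analytic_theorem} and Corollary~\ref{tauberian_corollary}) showing that under the commutator hypothesis $[B,A^{1/2}]\in\Lc_{p/2,\infty}$ the difference $\Tr((A^{1/2}BA^{1/2})^z)-\Tr(B^zA^z)$ extends continuously to $\re(z)\ge p$, which combined with the zeta hypothesis and classical Wiener--Ikehara yields the singular-value asymptotic; the passage to $\overline{\Ac}$ is exactly your Lipschitz/density argument, packaged as Lemma~\ref{bs_perturbation_lemma_sequential}.

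One technical wrinkle you should watch for: to verify the commutator condition $[(1+D^2)^{-1/2},A^{1/2}]\in\Lc_{p/2,\infty}$ via $QC^1$, you need $A^{1/2}\in\Ac$, but $\Ac$ is not closed under square roots. The paper handles this by \emph{not} taking $A=a$ directly: instead it sets $A=x^2$ for $0\le x\in\Ac$, applies the zeta hypothesis to the element $x^2\in\Ac$, and obtains the asymptotic for $(1+D^2)^{-p/4}x^{2p}(1+D^2)^{-p/4}$. Since every $0\le b\in\overline{\Ac}$ can be written as $x^{2p}$ with $x=b^{1/2p}\in\overline{\Ac}$, the density step then covers all of $\overline{\Ac}$. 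Your formulation ``comparable to $\Tr(T_a^{z/p})$'' is morally right but slightly off target; the intermediate object is $\Tr((A^{1/2}BA^{1/2})^z)$ with $A^{1/2}\in\Ac$, and the further passage from $(A^{1/2}BA^{1/2})^p$ to $A^{p/2}B^pA^{p/2}=B^{p/2}A^pB^{p/2}$ (in singular values) is a separate lemma (Lemma~\ref{from_csz_lemma}) using the same commutator estimate.
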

It follows from Theorem \ref{main_spectral_triple_theorem} that the operator $(1+D^2)^{-\frac{p}{4}}b(1+D^2)^{-\frac{p}{4}}$ is $\mathcal{PT}$-measurable, and hence
in Connes' notation,
\[
    \lim_{t\to\infty} t\mu(t,(1+D^2)^{-\frac{p}{4}}b(1+D^2)^{-\frac{p}{4}}) = \qint b\,|ds|^p.
\]
The choice of positive normalised trace to define $\qint$ is immaterial.
Theorem \ref{main_spectral_triple_theorem} is proved via a new mildly noncommutative variant of the Wiener-Ikehara Tauberian theorem, which is of interest in its own right.
In the cases of noncommutative tori, the conditions of the theorem are readily verified and one can even compute directly the limit, without any need for singular traces.

It is also possible to extend Theorem \ref{main_spectral_triple_theorem} to non-positive $a$ in the following way:
\begin{theorem}\label{secondary_spectral_triple_theorem}
    Let $(\Ac,H,D)$ be a spectral triple as in Theorem \ref{main_spectral_triple_theorem}. For all $a = a^* \in \overline{\Ac},$ we have the existence and coincidence of the limits
    \begin{equation*}
        \lim_{t\to\infty} t\mu\left(t,\left((1+D^2)^{-\frac{p}{4}}a(1+D^2)^{-\frac{p}{4}}\right)_+\right) = \lim_{t\to\infty} t\mu(t,(1+D^2)^{-\frac{p}{4}}a_+(1+D^2)^{-\frac{p}{4}})
    \end{equation*}
    and
    \begin{equation*}
        \lim_{t\to\infty} t\mu\left(t,\left((1+D^2)^{-\frac{p}{4}}a(1+D^2)^{-\frac{p}{4}}\right)_-\right) = \lim_{t\to\infty} t\mu(t,(1+D^2)^{-\frac{p}{4}}a_-(1+D^2)^{-\frac{p}{4}}).
    \end{equation*}
    Here, $(\cdot)_+$ and $(\cdot)_-$ denote the positive and negative parts of an operator respectively.
\end{theorem}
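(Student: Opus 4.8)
The plan is to reduce Theorem~\ref{secondary_spectral_triple_theorem} to Theorem~\ref{main_spectral_triple_theorem} applied to the positive elements $a_+,a_-\in\overline{\Ac}$, and to do so we need the following general principle: if $S,T\in\Lc_{p,\infty}$ are self-adjoint, then the difference between the eigenvalue counting functions of $S_+$ and of $(S+T)_+$ is controlled by the singular values of $T$. More precisely, I would first establish a perturbation lemma of the form
\[
    \mu(2t, S_+) - \mu(t,T_+)\ \leq\ \mu(t,(S+T)_+)\ \leq\ \mu(t/2, S_+) + \mu(t/2, T_+),
\]
which follows from the standard inequalities $\mu(s+t, A+B)\leq \mu(s,A)+\mu(t,B)$ together with $A_+\leq (A+B)_+ + (B_-)$ for self-adjoint $A,B$ (the operator inequality $(A+B)_+\geq A_+ - B_-$ in the sense of quadratic forms, then Weyl's inequality on the resulting counting functions). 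Here the role of $T$ will be played by a compact "cross term'' that we now identify.

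Write $X = (1+D^2)^{-p/4}$, so that the operator in question is $XaX$ with $a = a_+ - a_-$. Then
\[
    XaX = Xa_+X - Xa_-X,
\]
and since $Xa_+X\geq 0$ and $Xa_-X\geq 0$, the issue is precisely that $XaX$ is not simply the difference of these two positive operators in a way that splits the spectrum cleanly. The key observation is that $Xa_\pm X$ have, by Theorem~\ref{main_spectral_triple_theorem}, exact Weyl asymptotics $t\mu(t, Xa_\pm X)\to C_\pm$; what must be shown is that the \emph{cross interaction} does not affect the leading term. The natural device is the Birman--Schwinger-type identity or, more elementarily, the fact that $(XaX)_+$ and $Xa_+X$ differ by an operator that is $o(t^{-1})$ in the sense of singular value asymptotics. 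To see this I would use the factorisation $a_+ = a_+^{1/2}a_+^{1/2}$, write $Xa_+X = (a_+^{1/2}X)^*(a_+^{1/2}X)$, and compare with $XaX$ via the polar-type decomposition; the cross terms involve products like $Xa_+^{1/2}a_-^{1/2}X$, which vanish because $a_+a_- = 0$ forces $a_+^{1/2}a_-^{1/2}=0$. Thus in fact $Xa X = Xa_+X - Xa_-X$ with the two summands having \emph{orthogonal} ranges only up to the conjugation by $X$ — and it is exactly the commutator $[X,a_\pm^{1/2}]$, controlled by the $QC^1$ hypothesis, that measures the failure of orthogonality.

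Concretely, the $QC^1$ assumption gives $[D,a]\in\Lc_\infty$ for $a\in\Ac$ (and hence, by the standard functional-calculus estimates, $[(1+D^2)^{-p/4}, a]\in\Lc_{p,\infty}$ with room to spare, in fact in a smaller ideal $\Lc_{p',\infty}$ with $p'<p$ after interpolation, or at least $\mu(t,[X,a]) = o(t^{-1/p})$). I would first prove the theorem for $a\in\Ac$, where the commutator estimates are available directly, and then pass to $a=a^*\in\overline{\Ac}$ by a density/approximation argument: approximate $a$ in $C^*$-norm by self-adjoint $a_n\in\Ac$, use that $\|a_n - a\|\to 0$ implies $\|X(a_n-a)X\|_{p,\infty}\to 0$ (since $X\in\Lc_{p,\infty}$ and multiplication by a small-norm bounded operator is small in the quasi-norm), and invoke continuity of the Weyl limit functional $T\mapsto \lim_t t\mu(t,T_\pm)$ with respect to the $\Lc_{p,\infty}$-quasinorm on the set where the limit exists — this last continuity is itself a consequence of the $\mu(s+t,A+B)$ inequality applied with $A = X a_n X$ and $B = X(a-a_n)X$. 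The main obstacle will be the second step: showing that the cross term $Xa_+X - (XaX)_+$ (equivalently, the discrepancy between the positive part of the difference and the difference of the positive parts) lies in a strictly smaller ideal than $\Lc_{p,\infty}$, so that it contributes nothing to the leading Weyl asymptotics. This requires combining the orthogonality $a_+a_-=0$ with the $QC^1$ commutator bound quantitatively; the cleanest route is probably to write $(XaX)_+ = X a_+^{1/2}(1 + R)a_+^{1/2} X + (\text{lower order})$ where $R$ is built from commutators, and then apply the asymptotic stability of $t\mu(t,\cdot)$ under such relatively-compact perturbations, a fact which should follow from the Tauberian machinery underlying Theorem~\ref{main_spectral_triple_theorem} or, more directly, from the Ky Fan / Weyl inequalities once one knows the perturbation is $o(t^{-1})$ in singular values.
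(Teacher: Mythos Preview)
Your overall strategy --- reduce to Theorem \ref{main_spectral_triple_theorem} applied to $a_\pm\in\overline{\Ac}$, and show that the discrepancy $(XaX)_+-Xa_+X$ (with $X=(1+D^2)^{-p/4}$) contributes nothing to the leading asymptotic --- is exactly the paper's. But the route you propose for the discrepancy estimate is more complicated than the paper's and contains a real gap.

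The paper does not attempt any factorisation via $a_\pm^{1/2}$ or any exploitation of the orthogonality $a_+a_-=0$. It simply quotes a ready-made operator-theoretic lemma (Lemma \ref{SZ_lemma}, from \cite{SZ-asymptotics}): if $B\in\Lc_{2,\infty}$ and $A=A^*\in\Lc_\infty$ satisfy $[A,B]\in(\Lc_{2,\infty})_0$, then $(BAB)_+-BA_+B\in(\Lc_{1,\infty})_0$. With $B=X$ and $A=a$ this is precisely the needed statement, \emph{once one knows} $[X,a]\in(\Lc_{2,\infty})_0$. That commutator bound is obtained in two steps. First, Lemma \ref{section 5 cts lemma} shows $[(1+D^2)^{-1/2},a]\in(\Lc_{p,\infty})_0$ for \emph{every} $a\in\overline{\Ac}$, by density from the $\Lc_{p/2,\infty}$ estimate valid when $a\in\Ac$. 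Second, Lemma \ref{HSZ_modified_inequality} lifts this to $[(1+D^2)^{-p/4},a]\in(\Lc_{2,\infty})_0$. One finishes with Corollary \ref{simplified_bs_perturbation_lemma}. Note that only the commutator of $X$ with $a$ itself is needed, never with $a_\pm$ or $a_\pm^{1/2}$; and there is no splitting into ``$a\in\Ac$ first, then density.''

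Your route has a genuine obstacle. You plan to ``first prove the theorem for $a\in\Ac$, where the commutator estimates are available directly,'' manipulating $[X,a_\pm^{1/2}]$. But even when $a\in\Ac$, the elements $a_\pm$ and $a_\pm^{1/2}$ lie only in $\overline{\Ac}$, so the $QC^1$ hypothesis tells you nothing direct about $[(1+D^2)^{1/2},a_\pm^{1/2}]$; you are forced into a density argument at exactly this point, which yields only the weaker $(\Lc_{p,\infty})_0$ membership. Moreover, the heuristic formula $(XaX)_+=Xa_+^{1/2}(1+R)a_+^{1/2}X+(\text{lower order})$ is not an established identity: the positive part of a self-adjoint operator has no such algebraic expression, and the orthogonality $a_+a_-=0$ is lost after sandwiching by $X$, since the ranges of $Xa_+X$ and $Xa_-X$ need not be orthogonal. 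Making this precise would amount to reproving Lemma \ref{SZ_lemma} from scratch. The cleanest fix is to adopt the paper's abstract lemma, which requires only $[X,a]\in(\Lc_{2,\infty})_0$ and sidesteps all of these issues.
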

Employing Connes' notation, Theorem \ref{secondary_spectral_triple_theorem} implies that for all $a = a^* \in \overline{\Ac}$ we have
\[
    \lim_{t\to\infty} t\mu\left(t,\left((1+D^2)^{-\frac{p}{4}}a(1+D^2)^{-\frac{p}{4}}\right)_{\pm}\right) = \qint a_{\pm} |ds|^p.
\]

\subsection{Semiclassical Weyl asymptotics for spectral triples}
Let $(X,g)$ be a closed Riemannian $d$-manifold. For a real valued function $V \in C^\infty(X)$ and a positive parameter $h>0$, we consider the Sch\"rodinger operator
\[
    H_V := -h^2\Delta_g+M_V.
\]
Here, $\Delta_g$ is the Laplace-Beltrami operator on $X$ associated to $g.$
For $\lambda\in \Rl,$ the spectral counting function $N(\lambda,H_V)$ is the dimension of the span of the eigenvectors of $H_V$ with corresponding eigenvalue less than $\lambda.$ That is,
\[
    N(\lambda,H_V) := \Tr(\chi_{(-\infty,\lambda)}(H_V)).
\]
For further discussion of this concept and its physical relevance, see \cite{Simon1976,BS1989}.

The semiclassical Weyl law gives an asymptotic formula for $N(\lambda,H_V)$ as $h\downarrow 0.$ We denote by $t_-$ the negative part of $t\in \mathbb{R}$. That is, $t_- = \frac12(|t|-t).$
The semiclassical Weyl law for closed manifolds is as follows:
\begin{theorem}[Semiclassical Weyl law]\label{scwl_manifolds}
    Let $(X,g)$ be a closed $d$-dimensional Riemannian manifold with volume form $\nu_g,$ and let $V \in C(X)$ be real-valued.
    For all $\lambda\in \Rl$ we have
    \[
        \lim_{h\downarrow 0} h^dN(\lambda,-h^2\Delta_g+M_V) = \frac{\Vol(S^{d-1})}{d(2\pi)^d}\int_{X} (V-\lambda)_-^{\frac{d}{2}}\,d\nu_g.
    \]
\end{theorem}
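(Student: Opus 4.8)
The plan is to combine the Birman--Schwinger principle with the exact spectral asymptotics of the operators $(1-\Delta_g)^{-s}M_f(1-\Delta_g)^{-s}$ recalled in the introduction. Abbreviate $c_d := \frac{\Vol(S^{d-1})}{d(2\pi)^d}$. First I would reduce to $\lambda = 0$ by replacing $V$ with $V-\lambda \in C(X)$, so that it suffices to prove
\[
    \lim_{h\downarrow 0} h^d\,\Tr(\chi_{(-\infty,0)}(-h^2\Delta_g+M_V)) = c_d\int_X V_-^{\frac{d}{2}}\,d\nu_g .
\]
Set $A_h := h^2(1-\Delta_g) + M_{V_+}$, which is strictly positive and invertible, and $B_h := h^2 + M_{V_-} \geq h^2$. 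Then $A_h - B_h = -h^2\Delta_g + M_V$, and the Birman--Schwinger principle gives
\[
    N(0,-h^2\Delta_g+M_V) = \Tr\big(\chi_{(1,\infty)}(B_h^{1/2}A_h^{-1}B_h^{1/2})\big) .
\]
Since $A_h = h^2(1-\Delta_g+h^{-2}M_{V_+})$, rescaling the threshold identifies the right-hand side with $\#\{t : \mu(t,G_h) > h^2\}$, where $G_h := B_h^{1/2}(1-\Delta_g+h^{-2}M_{V_+})^{-1}B_h^{1/2}$. So the goal becomes $h^d\,\#\{t : \mu(t,G_h) > h^2\} \to c_d\int_X V_-^{d/2}\,d\nu_g$.

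Second, I would decouple the positive part $V_+$. Operator monotonicity of the resolvent gives $0 \leq (1-\Delta_g+h^{-2}M_{V_+})^{-1} \leq (1-\Delta_g)^{-1}$, hence $G_h \leq C_h$ for $C_h := B_h^{1/2}(1-\Delta_g)^{-1}B_h^{1/2}$, and therefore $\#\{t:\mu(t,G_h)>h^2\} \leq \#\{t:\mu(t,C_h)>h^2\}$; this handles $\limsup_{h\downarrow0} h^d N(0,-h^2\Delta_g+M_V)$. For the matching lower bound I would use Dirichlet bracketing: restricting the quadratic form of $-h^2\Delta_g + M_V$ to functions supported in an open set $U$ whose closure lies in $\{x : V(x) < 0\}$ only decreases the counting function, and on such $U$ the potential is $-V_-$, so the restricted operator is $-h^2\Delta_{g,U}^{\mathrm{Dir}} - M_{V_-|_U}$; applying the same chain of reductions (or classical results) to the Dirichlet Laplacian on $U$ and then letting $U$ exhaust $\{V<0\}$ recovers $c_d\int_X V_-^{d/2}\,d\nu_g$ from below. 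Thus the problem reduces to
\[
    \lim_{h\downarrow 0} h^d\,\#\{t : \mu(t,C_h) > h^2\} = c_d\int_X V_-^{\frac{d}{2}}\,d\nu_g .
\]

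Third, since $\|(h^2+V_-)^{1/2} - V_-^{1/2}\|_\infty \leq h$ and $(1-\Delta_g)^{-1} \in \Lc_{d/2,\infty}$, one has $\|C_h - C_0\|_{\Lc_{d/2,\infty}} = O(h)$ as $h\downarrow 0$, where $C_0 := M_{V_-^{1/2}}(1-\Delta_g)^{-1}M_{V_-^{1/2}}$; a standard stability estimate for $\limsup t^{2/d}\mu(t,\cdot)$ and $\liminf t^{2/d}\mu(t,\cdot)$ under small $\Lc_{d/2,\infty}$-perturbations then reduces the displayed limit to the single assertion that $t^{2/d}\mu(t,C_0) \to \big(c_d\int_X V_-^{d/2}\,d\nu_g\big)^{2/d}$ as $t\to\infty$. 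Using $\mu(t,C_0)^{d/2} = \mu(t,C_0^{d/2})$, this is equivalent to $\lim_{t\to\infty} t\mu(t,C_0^{d/2}) = c_d\int_X V_-^{d/2}\,d\nu_g = \qint V_-^{\frac{d}{2}}|ds|^d$. For smooth $V$ it follows from the exact Connes integration formula recalled in the introduction: $C_0^{d/2}$ is a classical pseudodifferential operator of order $-d$ with the same principal symbol $V_-^{d/2}(x)(1+|\xi|_g^2)^{-d/2}$ as $(1-\Delta_g)^{-d/4}M_{V_-^{d/2}}(1-\Delta_g)^{-d/4}$, so their difference lies in $\Lc_{q,\infty}$ for some $q<1$ and is invisible to $\lim t\mu(t,\cdot)$, while the latter operator has the required limit by Theorem~\ref{cif_manifold} in its exact (Birman--Solomyak) form. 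For merely continuous $V$ one approximates $V_-^{1/2}$ uniformly by smooth functions and uses the Lipschitz dependence of $\lim t\mu(t,(M_f(1-\Delta_g)^{-1}M_f)^{d/2})$ on $\|f\|_\infty$, together with continuity of $f\mapsto\int_X f^d\,d\nu_g$; alternatively one invokes the Tauberian Theorem~\ref{main_spectral_triple_theorem} directly for the manifold spectral triple.

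The hard part will be the two comparison steps: (i) producing the matching lower bound in the $V_+$-decoupling, i.e. carrying out the Dirichlet bracketing so that the boundary contributions are genuinely $o(h^{-d})$ and the bulk terms converge to $\int_X V_-^{d/2}\,d\nu_g$; and (ii) the passage from smooth to continuous $V$, i.e. establishing stability of the exact asymptotic $t\mu(t,C_0^{d/2})\to c_d\int_X V_-^{d/2}\,d\nu_g$ under uniform perturbation of the coefficient. Both are meant to be handled by soft operator-ideal estimates rather than microlocal analysis, which is precisely the advantage of the Tauberian route; once they are in place the constant $c_d$ emerges automatically from the elementary phase-space volume computation $\frac{1}{(2\pi)^d}\,\Vol\{(x,\xi)\in T^*X : V_-(x)|\xi|_g^{-2} > \lambda\} = c_d\,\lambda^{-d/2}\int_X V_-^{d/2}\,d\nu_g$.
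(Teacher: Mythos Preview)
The paper does not prove Theorem~\ref{scwl_manifolds}; it is quoted as a classical result with references to Birman--Solomyak, Rozenblum, Tamura, and others. What the paper actually proves is the noncommutative analogue, Theorem~\ref{scwl_spectral_triple_thm}, and the commutative Weyl law then becomes a special case of that. So there is no ``paper's own proof'' of this specific statement to compare against.

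That said, your outline differs substantially from the route the paper takes to the spectral-triple version. The paper never uses Dirichlet bracketing or any localisation to $\{V<0\}$. Instead, after the Birman--Schwinger step (Theorem~\ref{bs_principle} and Corollary~\ref{bs_principle_in_terms_of_mu}), it passes from $\big((1+T)^{-1/2}V(1+T)^{-1/2}\big)_-$ to $V_-^{1/2}(1+T)^{-1}V_-^{1/2}$ in one stroke using the Potapov--Sukochev operator-Lipschitz result \eqref{positive_part_trick} (Lemma~\ref{last section commutator lemma}), then replaces this by $V_-^{p/4}(1+T)^{-p/2}V_-^{p/4}$ via Lemma~\ref{from_csz_lemma}, and finally invokes the Tauberian machinery for the limit and its value. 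Your monotonicity/bracketing split is more geometric and closer in spirit to the classical literature, but it does not transfer to the noncommutative setting, which is the whole point of the paper; the paper's commutator route handles both cases uniformly.

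There is also a genuine gap in one branch of your argument: the claim that for smooth $V$ the operator $C_0^{d/2}$ is a classical pseudodifferential operator of order $-d$ is false in general. Even when $V\in C^\infty(X)$, the function $V_-^{1/2}$ is only H\"older continuous across $\partial\{V<0\}$, so $C_0 = M_{V_-^{1/2}}(1-\Delta_g)^{-1}M_{V_-^{1/2}}$ is not a classical $\Psi$DO and there is no symbol calculus available to compare $C_0^{d/2}$ with $(1-\Delta_g)^{-d/4}M_{V_-^{d/2}}(1-\Delta_g)^{-d/4}$. Your fallback (``alternatively one invokes Theorem~\ref{main_spectral_triple_theorem}'') does work, but at that point you are simply specialising the paper's noncommutative proof to the manifold triple, and the Dirichlet bracketing and $\Psi$DO discussion become superfluous.
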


Various forms of this theorem (in particular for functions on $\Rl^d$ rather than a closed manifold $X$) have a long history in the literature. See for example the work of Birman 
and Solomyak \cite{BS1971,BS1973,BS1980,BS1989}, Birman and Borzov \cite{BB1971}, Rozenblum \cite{Rozenblum1972}, Martin \cite{Martin1972} and Tamura \cite{Tamura1974}. The condition that $V$ be continuous
is far stronger than necessary, and when $d > 2$ can be weakened to $V\in L_{\frac{d}{2}}(X)$ \cite{Cwikel1977}.

It is well-known that the Birman--Schwinger principle can be used to infer semiclassical Weyl laws of the above type from asymptotic formulae
for the eigenvalues of negative order pseudodifferential operators.
We employ Theorem \ref{main_spectral_triple_theorem} to prove a semiclassical Weyl law for spectral triples:
\begin{theorem}\label{scwl_spectral_triple_thm}
    Let $(\Ac,H,D)$ be a spectral triple obeying the same conditions as in Theorem \ref{main_spectral_triple_theorem}, with $p>2,$ and let $V=V^* \in \overline{\Ac}.$ For all $\lambda\in \Rl,$ we have the existence and coincidence of the following limits:
    \[
        \lim_{h\downarrow 0} h^p N(\lambda,h^2D^2+V) = \lim_{t\to\infty} t\mu(t,(1+D^2)^{-\frac{p}{4}}(V-\lambda)_-^{\frac{p}{2}}(1+D^2)^{-\frac{p}{4}}).
    \]
\end{theorem}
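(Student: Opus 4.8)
The plan is to derive the theorem from Theorem~\ref{main_spectral_triple_theorem} by means of the Birman--Schwinger principle. Write $G := 1+D^2$ and $b := (V-\lambda)_- \geq 0$; since $V = V^* \in \overline{\Ac}$ and $t \mapsto (t-\lambda)_-^{p/2}$ is continuous, $b^{p/2}$ is a positive element of $\overline{\Ac}$, so Theorem~\ref{main_spectral_triple_theorem} ensures that $c := \lim_{t\to\infty} t\mu(t, G^{-p/4}b^{p/2}G^{-p/4})$ exists. It suffices to prove $\lim_{h\downarrow 0} h^p N(\lambda, h^2 D^2 + V) = c$; by $\mathcal{PT}$-measurability this limit then equals $\qint (V-\lambda)_-^{p/2}|ds|^p$, matching Theorem~\ref{scwl_manifolds} in the classical case. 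Throughout, regularity issues (that $V$ need only lie in $\overline{\Ac}$, and that $t\mapsto(t-\lambda)_+$ is not smooth) are absorbed by norm-approximation together with the continuity of $\lambda' \mapsto \qint (V-\lambda')_-^{p/2}|ds|^p$, which is immediate from norm-continuity of $\lambda'\mapsto (V-\lambda')_-^{p/2}$ and continuity of traces on $\Lc_{1,\infty}$.

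The crux is a \emph{comparison claim}: for every $0 \leq a \in \overline{\Ac}$, the positive operators $(a^{1/2}G^{-1}a^{1/2})^{p/2}$ and $G^{-p/4}a^{p/2}G^{-p/4}$ both lie in $\Lc_{1,\infty}$ and differ by an operator in an ideal strictly inside $\Lc_{1,\infty}$ (one with singular value function $o(1/t)$); in particular
\[
    \lim_{t\to\infty} t\mu\big(t, (a^{1/2}G^{-1}a^{1/2})^{p/2}\big) = \lim_{t\to\infty} t\mu\big(t, G^{-p/4}a^{p/2}G^{-p/4}\big).
\]
Were $a$ to commute with $G$ the two operators would coincide, so this is an assertion about commutator remainders, and I would establish it by: (i) approximating $a$ in operator norm by elements of $\Ac$, reducing, via continuity of traces on $\Lc_{1,\infty}$, to the $QC^1$-regular case; (ii) expanding the fractional powers with the formula $X^{s} = c_s\int_0^\infty t^{s-1}(X - X(X+t)^{-1}X)\,dt$ and the resolvent identity, so that every remainder becomes a finite sum of products each containing a factor $[G^{-1},a]$ or $[(G+t)^{-1},a]$; (iii) estimating these products using boundedness of $[D,a]$, the membership $(D\pm i)^{-1} \in \Lc_{p,\infty}$ and boundedness of $DG^{-1}$, and the H\"older inequality for weak Schatten ideals, so that each commutator gains an order and the remainders land in $\Lc_q$ for some $q < 1$, hence in the kernel of every Dixmier trace. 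The bookkeeping in (iii), and making the approximation in (i) quantitative, is the principal technical obstacle of the proof.

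For the upper bound, drop the positive part: $h^2 D^2 + V - \lambda \geq h^2 G - (b+h^2)$, and since $G$ is boundedly invertible the Birman--Schwinger principle gives the exact inequality
\[
    N(\lambda, h^2 D^2 + V) \leq \#\{j : \mu(j, G^{-1/2}(b+h^2)G^{-1/2}) > h^2\}.
\]
As $G^{-1} \in \Lc_{p/2,\infty}$, the operator $G^{-1/2}(b+h^2)G^{-1/2}$ differs from $G^{-1/2}bG^{-1/2}$ by a norm-$O(h^2)$ perturbation whose bulk has fixed finite rank, so a routine use of $\mu(j+k,X+Y) \leq \mu(j,X) + \mu(k,Y)$ bounds the count by $\#\{j : \mu(j, G^{-1/2}bG^{-1/2}) > (1-\varepsilon)h^2\} + O(1)$ for each $\varepsilon > 0$. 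Writing $\mu(j, G^{-1/2}bG^{-1/2}) = \mu(j, b^{1/2}G^{-1}b^{1/2})$, using $\#\{j : \mu(j,T) > s\} = \#\{j : \mu(j, T^{p/2}) > s^{p/2}\}$, and invoking the comparison claim with $a = b$ to evaluate $\lim_j j\mu(j, (b^{1/2}G^{-1}b^{1/2})^{p/2}) = c$, we get $h^p N(\lambda, h^2 D^2 + V) \leq c(1-\varepsilon)^{-p/2}(1+o(1))$; letting $\varepsilon \downarrow 0$ yields $\limsup_{h\downarrow 0} h^p N(\lambda, h^2 D^2 + V) \leq c$.

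For the matching lower bound, set $H_0 := h^2 D^2 + (V-\lambda)_+ \geq 0$, so that $N(\lambda, h^2 D^2 + V) = N_-(H_0 - b)$ with $N_-(T) := \Tr(\chi_{(-\infty,0)}(T))$. Since adding $h^2$ can only decrease $N_-$, we have $N_-(H_0 - b) \geq N_-((H_0 + h^2) - b) = \#\{j : \mu(j, b^{1/2}(H_0+h^2)^{-1}b^{1/2}) > 1\}$, as $H_0 + h^2 \geq h^2 G$ is invertible. The key structural point is that $b^{1/2}(V-\lambda)_+ = 0$, because $(V-\lambda)_-$ and $(V-\lambda)_+$ have disjoint supports as functions of $V$; hence the resolvent identity $(H_0+h^2)^{-1} = (h^2 G)^{-1} - (h^2 G)^{-1}(V-\lambda)_+(H_0+h^2)^{-1}$ gives $b^{1/2}(h^2 G)^{-1}(V-\lambda)_+ = h^{-2}b^{1/2}[G^{-1},(V-\lambda)_+]$, so that $b^{1/2}(H_0+h^2)^{-1}b^{1/2} = h^{-2}b^{1/2}G^{-1}b^{1/2} - E_h$, where $E_h$ carries a factor $[G^{-1},(V-\lambda)_+]$ (after smoothing $(\cdot-\lambda)_+$, the replacement controlled as in the first paragraph). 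By the commutator estimates of (iii), $E_h$ lies in $\Lc_{r,\infty}$ for some $r < p/2$ with quasinorm polynomially bounded in $h^{-1}$, whence $\#\{j : \mu(j, E_h) > \varepsilon\} = o(h^{-p})$ for each fixed $\varepsilon > 0$. A perturbation argument (again $\mu(j+k, X+Y) \leq \mu(j,X)+\mu(k,Y)$) together with the comparison claim then yields
\[
    N(\lambda, h^2 D^2 + V) \geq \#\{j : \mu(j, b^{1/2}G^{-1}b^{1/2}) > (1+\varepsilon)h^2\} - o(h^{-p}) = c\,(1+\varepsilon)^{-p/2}h^{-p}(1+o(1)) - o(h^{-p}).
\]
Letting $\varepsilon \downarrow 0$ gives $\liminf_{h\downarrow 0} h^p N(\lambda, h^2 D^2 + V) \geq c$, which with the upper bound completes the proof.
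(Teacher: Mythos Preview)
Your comparison claim is essentially Lemma~\ref{from_csz_lemma} of the paper, and your upper bound via $h^2D^2+V-\lambda\geq h^2G-(b+h^2)$ together with Birman--Schwinger is correct and close in spirit to the paper's Theorem~\ref{bs_principle}. The difficulty is in your lower bound, where the quantitative claim about $E_h$ does not follow from what you wrote.

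Concretely, $E_h=h^{-2}b^{1/2}K R_h b^{1/2}$ with $K=[G^{-1},(V-\lambda)_+]$ and $R_h=(H_0+h^2)^{-1}$. The only general control available is $K\in(\Lc_{p/2,\infty})_0$ (from Lemma~\ref{section 5 cts lemma}, since $(V-\lambda)_+\in\overline{\Ac}$) and $0\leq R_h\leq h^{-2}G^{-1}\in h^{-2}\Lc_{p/2,\infty}$. H\"older then gives $\|E_h\|_{p/4,\infty}=O(h^{-4})$, i.e.\ $r=p/4$ and $m=4$; but $\#\{j:\mu(j,E_h)>\varepsilon\}\leq (C/\varepsilon)^r h^{-mr}$, and $mr=p$ exactly, so you only obtain $O(h^{-p})$, not $o(h^{-p})$. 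Your ``smoothing $(\cdot-\lambda)_+$'' does not help: replacing the function by a smooth one still leaves $f(V)\in\overline{\Ac}\setminus\Ac$, since $\Ac$ is not assumed closed under continuous (or even smooth) functional calculus, so the commutator gains no extra ideal. The claim is nonetheless salvageable: use that $K\in(\Lc_{p/2,\infty})_0$, split $K=K_\delta+F_\delta$ with $\|K_\delta\|_{p/2,\infty}<\delta$ and $F_\delta$ finite rank, and note $h^p\#\{j:\mu(j,E_h)>\varepsilon\}\leq (2C\delta/\varepsilon)^{p/4}+h^p\,\mathrm{rank}(F_\delta)\to (2C\delta/\varepsilon)^{p/4}$, which vanishes as $\delta\downarrow 0$. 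This is the missing step.

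The paper avoids this $h$-dependent resolvent analysis altogether. It applies Birman--Schwinger once to the $h$-independent operator $G^{-1/2}VG^{-1/2}$ (Corollary~\ref{bs_principle_in_terms_of_mu}), then uses the Potapov--Sukochev operator-Lipschitz theorem \eqref{positive_part_trick} in Lemma~\ref{last section commutator lemma} to show
\[
\big(G^{-1/2}VG^{-1/2}\big)_- - V_-^{1/2}G^{-1}V_-^{1/2}\in(\Lc_{p/2,\infty})_0,
\]
and finally invokes Lemma~\ref{from_csz_lemma} (your comparison claim) to pass to $G^{-p/4}V_-^{p/2}G^{-p/4}$. This is cleaner precisely because no powers of $h^{-1}$ accumulate: the only operator being perturbed is fixed, and the ``little-o'' ideal membership suffices directly. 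Your route can be made to work, but the paper's replaces an iterated resolvent expansion by a single appeal to \eqref{positive_part_trick}.
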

The fact that these limits coincide if either exists is not really novel, being a straightforward application of the Birman--Schwinger principle as we describe below. However, what is novel is the existence of the limit on the right
hand side in a high degree of generality.

Again employing the notation of Connes, we have that
\begin{equation}\label{scwl_spectral_triple}
    \lim_{h\downarrow 0} h^pN(\lambda,h^2D^2+V) = \qint (V-\lambda)_-^{\frac{p}{2}}\,|ds|^p.
\end{equation}
Once again, the choice of positive normalized trace to define $\qint$ is irrelevant.
As with Theorem \ref{main_spectral_triple_theorem}, \eqref{scwl_spectral_triple} holds in particular for noncommutative tori.
The similarity between \eqref{scwl_spectral_triple} and Theorem \ref{scwl_manifolds} provides an interesting new interpretation of the quantised
integral $\qint a \,|ds|^p$ when $a\in \Ac$ is positive, relating the value of the integral with $N(0,h^2D^2-a^{\frac{2}{p}})$ in the limit $h\downarrow 0.$

Theorem \ref{scwl_spectral_triple_thm} holds for noncommutative tori (Corollary \ref{scwl_torus_continuous} below) and may be extended to a much wider class of $V$
using the Cwikel type estimates from \cite{MP2021} (Corollary \ref{conjecture_confirmation}). This confirms a special case of Conjecture 8.8 of \cite{MP2021}.

The authors wish to thank Prof.~Alain Connes who made many helpful comments and suggestions following a presentation on this work given by the first named author, and whose question at a 2017 conference in Fudan prompted this line of investigation. 
We also extend our gratitude to Prof.~Grigory Rozenblum, whose numerous suggestions following a careful reading of this text greatly improved the presentation. Discussions and collaboration with Rapha\"el Ponge are also gratefully acknowledged.

\section{Preliminary material on singular values}\label{notation_section}
Recall that we denote by $\Kc = \Kc(H)$ the $C^*$-subalgebra of all compact endomorphisms of a Hilbert space $H,$
 and $\mathcal{L}_\infty = \mathcal{L}_\infty(H)$ is the algebra of all bounded linear endomorphisms of $H$. The operator
norm on $\mathcal{L}_\infty$ is $\|\cdot\|.$
If $T\in \Kc$ is a compact operator on $H$, the sinuglar value function $t\mapsto \mu(t,T)$
may be defined as
\[
    \mu(t,T) := \inf\{\|T-R\|\;:\;\mathrm{rank}(T)\leq t\},\quad t \geq 0.
\]
We denote $\mu(T)$ for the sequence $\{\mu(n,T)\}_{n\geq 0}$. Equivalently, $\mu(T)$ is the sequence of eigenvalues of the absolute value $|T|$ arranged in non-increasing order with multiplicities.
The Schatten ideal $\Lc_p=\Lc_p(H)$ is the ideal of $T \in \Kc$ such that
\[
    \|T\|_p := \|\mu(T)\|_{\ell_p} = \left(\sum_{n=0}^\infty \mu(n,T)^p\right)^{\frac{1}{p}}.
\]    
The ideal $\Lc_{1,\infty}$ is defined as the quasi-Banach space of compact operators $T$ with $\mu(t,T) = O(t^{-1})$ as $t\to\infty.$ To be precise, for $p>0$ we define
\[
    \Lc_{p,\infty} := \{T \in \Kc\;:\;\|T\|_{p,\infty} := \sup_{t\geq 0} t^{\frac{1}{p}}\mu(t,T) < \infty\}.
\]
This is a quasi-Banach ideal of $\Lc_{\infty}.$ The set of finite rank operators is not dense in $\Lc_{p,\infty}.$ Instead, the closure
of the set of finite rank operators in the $\Lc_{p,\infty}$-quasinorm is denoted $(\Lc_{p,\infty})_0$, and coincides
with the set of $T\in \Kc$ such that $\lim_{t\to\infty} t^{\frac{1}{p}}\mu(t,T) =0.$
Similarly, the ideal $\Lc_{p,1}$ is defined as
\[
    \Lc_{p,1} := \{T\in \Kc(H)\;:\;\|T\|_{p,1} := \sum_{n=0}^\infty (n+1)^{\frac{1}{p}-1}\mu(n,T) < \infty.
\]
We have the H\"older-type inequality
\begin{equation}\label{weird_holder}
    \|TS\|_{1} \leq \|T\|_{p,\infty}\|S\|_{q,1},\quad \frac{1}{p}+\frac{1}{q} = 1.
\end{equation}

\subsection{Outline of the proofs}\label{overview_section}
The main workhorse for our arguments is a mildly noncommutative version of the Wiener-Ikehara theorem, which we explain briefly now before
going into technical details in the next section.

In the language of compact operators, the Wiener-Ikehara theorem \cite[Chapter III, Theorem 5.1]{Korevaar-tauberian-2004} can be stated as follows.
If $0 \leq V \in \Kc$ and there exists $c \in \Rl$ and $p > 0$ such that the function
\begin{equation*}
    F_V(z) := \Tr(V^z)-\frac{c}{z-p},\quad \re(z)>p
\end{equation*}
is well-defined\footnote{In particular, $V^r \in \mathcal{L}_1$ for every $r>p.$} and admits a continuous extension to the closed half-plane $\re(z)\geq p$, it follows that there exists the limit
\[
    \lim_{t\to\infty} t\mu(t,V^p) = \frac{c}{p}.
\]

We prove a new Tauberian theorem involving two positive bounded operators $A$ and $B$ which do not necessarily commute and which satisfy the following two conditions:
\begin{enumerate}[{\rm (i)}]
    \item{} $B \in \Lc_{p,\infty},$ where $p>2,$
    \item{} $[A^{\frac12},B] \in \Lc_{\frac{p}{2},\infty}.$
\end{enumerate}
(Later we will refer to these assumptions as Condition \ref{compact conditions for analyticity}).
If there exists $c\in \Rl$ such that the function
\[
    F_{A,B}(z) := \Tr(A^zB^{z})-\frac{c}{z-p},\quad \re(z)>p.
\]
admits a continuous extension to the closed half-plane $\re(z)\geq p$, then there exists the limit
\[
    \lim_{t\to\infty} t\mu(t,B^{\frac{p}{2}}A^pB^{\frac{p}{2}}) = \frac{c}{p}.
\]
(see Corollary \ref{tauberian_corollary}).
In the case that $A$ and $B$ commute, this recovers the Wiener-Ikehara theorem, save for the extra \emph{a priori} assumptions that $B\in \Lc_{p,\infty}$ and the restriction $p>2,$ which are redundant in
the Wiener-Ikehara theorem. The restriction $p>2$ is made to simplify the exposition and is not likely to be essential.

This Tauberian theorem is the basis for the proof of Theorem \ref{main_spectral_triple_theorem}, where we take $B = (1+D^2)^{-\frac12}$ and $A = 
a^{2},$ and the assumptions
on $A$ and $B$ are restated in terms of the spectral triple.

\section{A noncommutative Tauberian theorem}
We will work with operators $A$ and $B$ and a parameter $p$ which obey the following:
\begin{condition}\label{compact conditions for analyticity} 
    Let $p>2$ and let $0\leq A,B\in\mathcal{L}_{\infty}$ satisfy the conditions
    \begin{enumerate}[{\rm (i)}]
        \item\label{ccond1} $B\in\mathcal{L}_{p,\infty}$
        \item\label{ccond2} $[B,A^{\frac12}]\in\mathcal{L}_{\frac{p}{2},\infty}.$
    \end{enumerate}
\end{condition}

This section is primarily devoted to the proof of the following assertion:
\begin{theorem}\label{main_analytic_theorem}
    If $A,B$ and $p$ satisfy Condition \ref{compact conditions for analyticity}, then
    \[
        f_{A,B}(z) := \Tr((A^{\frac12}BA^{\frac12})^z-B^zA^z),\quad \re(z)>p
    \]
    extends continuously to the closed half-plane $\{z\in \Cplx\;:\;\re(z)\geq p\}.$
    More specifically, there exists $n\in \Ntrl$ and an absolute constant $c_{{\rm abs}}$ such that
    \[
        |f_{A,B}(z_1)-f_{A,B}(z_2)|\leq c_{{\rm abs}}|z_1-z_2|\cdot (1+|z_1|)^n\cdot (1+|z_2|)^n,\quad p < \re(z_1), \re(z_2) \leq p+1.
    \]
\end{theorem}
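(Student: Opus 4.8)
The plan is to interpolate between $(A^{1/2}BA^{1/2})^z$ and $B^zA^z$ along a family of operators, differentiate in the interpolation parameter, and estimate the resulting trace using the Hölder inequality \eqref{weird_holder} together with a careful bookkeeping of the commutator $[B,A^{1/2}]$ in $\Lc_{p/2,\infty}$. Concretely, write $C := A^{1/2}BA^{1/2}$, so that $C$ is positive and, using Condition \ref{compact conditions for analyticity}\eqref{ccond2}, $C - A^{1/2}\cdot A^{1/2}B = A^{1/2}[A^{1/2},B] \in \Lc_{p/2,\infty}$; more usefully, $C$ differs from $B A$ (and from $A B$) by an element of $\Lc_{p/2,\infty}$. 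The key analytic input is a holomorphic functional calculus representation of the fractional power: for $\re(z)>0$ and a positive operator $X$ with $\|X\|<R$, one has $X^z = \frac{1}{2\pi i}\oint_{\Gamma} \zeta^z (\zeta - X)^{-1}\,d\zeta$ for a suitable contour $\Gamma$ around $\sigma(X)\subset(0,R)$. Substituting $X = C$ and $X$ built from $B,A$, and using the second resolvent identity, reduces the difference $C^z - B^z A^z$ to contour integrals of resolvents sandwiching the "defect" operator $E := C - BA = A^{1/2}[A^{1/2},B]+[A^{1/2},B]B^{?}$—in any case an element of $\Lc_{p/2,\infty}$ built from one factor of $[B,A^{1/2}]$ and bounded factors.

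The first main step is therefore to express $f_{A,B}(z)$ as a finite sum of traces of the form $\Tr\big(G_1(z)\,E\,G_2(z)\big)$ plus lower-order remainders, where $E\in\Lc_{p/2,\infty}$ as above, and $G_1(z),G_2(z)$ are operator-valued holomorphic functions of $z$ built from resolvents $(\zeta-C)^{-1}$, $(\zeta-BA)^{-1}$ and powers $B^{z_1}, A^{z_2}$ with $\re(z_1)+\re(z_2)=\re(z)$; crucially these can be arranged so that one of them lies in $\Lc_{p/2,1}$ (or better) when $\re(z)>p$, with quasinorm controlled polynomially in $(1+|z|)$. The second step is to apply the Hölder inequality $\|TS\|_1 \le \|T\|_{p/2,\infty}\|S\|_{(p/2)',1}$—or rather the version with $\tfrac1{p/2}+\tfrac1{q}=1$—to each such term, using $E\in\Lc_{p/2,\infty}$ against the complementary factor in $\Lc_{q,1}$. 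Since $\Lc_{p,\infty}\cdot\Lc_{p,\infty}\subset\Lc_{p/2,\infty}$ and, more delicately, products like $B^{z_1}$ with $\re(z_1)$ slightly below $p/2$-worth of smoothing land in $\Lc_{q,1}$ for the relevant $q$ because $p>2$ forces $q<\infty$ with room to spare, the traces converge absolutely on $\re(z)\ge p$ with the stated polynomial bounds. This is exactly where the hypothesis $p>2$ is used: it guarantees the conjugate index $q = p/(p-2)$ stays finite so that $\Lc_{q,1}$ memberships hold.

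The third step is the Lipschitz estimate: having written $f_{A,B}(z)$ as an absolutely convergent trace-integral that is holomorphic in $\{\re(z)>p\}$ and continuous up to the boundary, one differentiates under the trace and the contour integral. Each $z$-derivative either hits a $\zeta^z$ factor, producing a $\log\zeta$ (bounded on the compact contour), or hits $B^{z_1}$/$A^{z_2}$, producing $\log B$ or $\log A$; since $B,A$ are bounded and $B$ is in addition in $\Lc_{p,\infty}$ with $\|\log B\|$-type factors absorbable (one loses only a power of $\log$, dominated by a power of $(1+|z|)$ after the contour deformation), $f_{A,B}'(z)$ is bounded by $c_{\mathrm{abs}}(1+|z|)^n$ on the strip $p<\re(z)\le p+1$, and integrating along the segment from $z_2$ to $z_1$ gives the claimed inequality with the product $(1+|z_1|)^n(1+|z_2|)^n$ absorbing the worst case. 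The main obstacle I anticipate is the second step: organizing the telescoping of $C^z - B^zA^z$ so that \emph{every} term genuinely carries a factor of $E\in\Lc_{p/2,\infty}$ (no term is left with only the weaker $\Lc_{p,\infty}$ information, which would make the trace merely in $\Lc_{1,\infty}$ and not trace-class), and simultaneously controlling the resolvent norms $\|(\zeta-BA)^{-1}\|$ near the boundary $\re(z)=p$—$BA$ need not be self-adjoint, so one works with $(\zeta - AB)$ or symmetrizes, and the contour must be chosen uniformly for $z$ in the strip. Once the commutator is correctly shepherded into every summand, the Hölder bound \eqref{weird_holder} and routine estimates on holomorphic functional calculus close the argument.
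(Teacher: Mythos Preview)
Your proposal has two concrete gaps that would block the argument. First, the contour representation $X^z=\frac{1}{2\pi i}\oint_\Gamma \zeta^z(\zeta-X)^{-1}\,d\zeta$ cannot be used as stated: $B$ and $C=A^{1/2}BA^{1/2}$ are compact, so $0\in\sigma(B)\cap\sigma(C)$, while $\zeta^z$ has a branch point at $0$; your contour cannot enclose the spectrum. More seriously, $B^zA^z$ is not $(BA)^z$ nor $X^z$ for any single operator, so the second resolvent identity does not give $C^z-B^zA^z$ as $\oint\zeta^z(\zeta-C)^{-1}E(\zeta-\text{something})^{-1}\,d\zeta$. You would still need to account for $(BA)^z-B^zA^z$ (or some intermediate), and $(BA)^z$ for non-self-adjoint $BA$ is itself delicate. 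You correctly flag this as ``the main obstacle'' but offer no mechanism to resolve it. Second, your Lipschitz step differentiates in $z$, producing $(\log B)B^z$; but $\log B$ is unbounded below (eigenvalues $\to-\infty$), so ``$\|\log B\|$-type factors absorbable'' is not correct and cannot be dominated by a power of $(1+|z|)$.

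The paper avoids both issues by a completely different representation, Theorem~\ref{csz key lemma} (from \cite{CSZ,SZ-asterisque}): a \emph{real} integral formula
\[
B^zA^z-(A^{1/2}BA^{1/2})^z \;=\; T_z(0)-\int_{\Rl}T_z(s)\,\widehat{g}_z(s)\,ds,
\]
where each $T_z(s)$ is, by construction, a product containing an explicit commutator $[BA^{1/2},A^{w}]$ (for $w$ on a vertical line) sandwiched between powers $B^{z-1+is}$ and $Y^{z-1-is}$, and $g_z$ is an explicit Schwartz function. The commutator factor is put into $\Lc_{p/2,\infty}$ via the Potapov--Sukochev operator-Lipschitz inequality \eqref{ps_acta_inequality} (Lemmas~\ref{first psacta lemma}, \ref{second psacta lemma}), and the complementary factor $B^{z-1}$ or $Y^{z-1}$ into $\Lc_{p/(p-2),1}$ by direct singular-value bounds---this is the H\"older split you anticipated, and the place $p>2$ is used. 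Crucially, the Lipschitz-in-$z$ estimate is obtained not by differentiating (which would hit $\log B$) but by bounding $t\mapsto t^{z_1}-t^{z_2}$ and its derivative on $[0,1]$ directly (Lemma~\ref{continuity phi compute lemma}) and combining with $W^2_2$ bounds on $g_{z_1}-g_{z_2}$ (Lemma~\ref{gz continuity lemma}). The representation formula is the key missing idea in your plan.
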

The claim that $f_{A,B}$ has continuous extension is a special case of \cite[Theorem 5.4.2]{SZ-asterisque}, which states under more general conditions that $f_{A,B}$ has analytic continuation to the half-plane $\re(z)>p-1$, but the weaker statement in \eqref{main_analytic_theorem} has some independent interest and can be proved a little more easily, and thus we give a proof in the following section. 

Theorem \ref{main_analytic_theorem} implies the following Tauberian theorem:
\begin{theorem}\label{main_tauberian_theorem}
    Let $A, B$ and $p$ obey Condition \ref{compact conditions for analyticity}. 
    If there exists $c\in \Rl$ such that the function
    \[
        F_{A,B}(z) := \Tr(B^zA^z)-\frac{c}{z-p},\quad \re(z)>p
    \]
    has a continuous extension to the closed half-plane $\{z\in \Cplx\;:\;\re(z)\geq p\},$ then there exists the limit
    \[
        \lim_{t\to \infty} t\mu(t,(A^{\frac{1}{2}}BA^{\frac{1}{2}})^p) = \frac{c}{p}.
    \]
\end{theorem}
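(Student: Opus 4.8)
The plan is to derive Theorem~\ref{main_tauberian_theorem} from Theorem~\ref{main_analytic_theorem} by reducing the noncommutative situation to the classical Wiener--Ikehara theorem stated above. The key observation is that
\[
    \Tr((A^{\frac12}BA^{\frac12})^z) = \Tr(B^zA^z) + f_{A,B}(z),\quad \re(z)>p,
\]
so that if we set $V := A^{\frac12}BA^{\frac12}$, then
\[
    F_V(z) := \Tr(V^z) - \frac{c}{z-p} = \left(\Tr(B^zA^z) - \frac{c}{z-p}\right) + f_{A,B}(z) = F_{A,B}(z) + f_{A,B}(z).
\]
By hypothesis $F_{A,B}$ extends continuously to $\{\re(z)\ge p\}$, and by Theorem~\ref{main_analytic_theorem} so does $f_{A,B}$; hence $F_V$ extends continuously to the closed half-plane $\{\re(z)\ge p\}$. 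It remains only to check that the classical Wiener--Ikehara hypotheses are met: that $V\ge 0$ is compact and that $V^r\in\Lc_1$ for every $r>p$. Positivity of $V = A^{\frac12}BA^{\frac12}$ is immediate from $A,B\ge 0$. For the summability, note that $V$ is unitarily equivalent (indeed has the same nonzero singular values as) $B^{\frac12}AB^{\frac12}$, and since $B\in\Lc_{p,\infty}$ and $A$ is bounded, $V\in\Lc_{p,\infty}$; therefore $V^r\in\Lc_{1,\infty}\cdot(\text{bounded})\subseteq\Lc_1$ for $r>p$, and in particular $\Tr(V^z)$ is well-defined for $\re(z)>p$, which is exactly what makes $F_V$ meaningful.

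Having verified these hypotheses, the classical Wiener--Ikehara theorem (in the operator form quoted in Section~\ref{overview_section}) applies directly to $V$ with the \emph{same} constant $c$, yielding
\[
    \lim_{t\to\infty} t\,\mu(t,V^p) = \frac{c}{p}.
\]
Since $V^p = (A^{\frac12}BA^{\frac12})^p$, this is precisely the desired conclusion.

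The one point requiring a little care — and the step I expect to be the main (though modest) obstacle — is the bookkeeping of which operator is fed into Wiener--Ikehara and why the constant $c$ is transported unchanged: the function $F_{A,B}$ is built from the trace of the product $B^zA^z$, not of a power of a single positive operator, so one must be careful that $\Tr(B^zA^z)$ is even well-defined (which again follows from $B\in\Lc_{p,\infty}$, $A$ bounded, and $\re(z)>p$, via the Hölder inequality~\eqref{weird_holder}) and that the subtracted singular term $\frac{c}{z-p}$ attaches to $F_V$ exactly as in the classical statement. The additive correction $f_{A,B}$ contributes nothing to the pole at $z=p$ precisely because Theorem~\ref{main_analytic_theorem} guarantees it is continuous (hence bounded) up to the line $\re(z)=p$; this is the whole reason Theorem~\ref{main_analytic_theorem} is the substantive input and the present theorem is then a short deduction. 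Beyond that, everything is a routine invocation of the cited classical result.
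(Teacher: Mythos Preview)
Your proof is essentially identical to the paper's: both add $f_{A,B}$ and $F_{A,B}$, observe that their sum is $\Tr((A^{\frac12}BA^{\frac12})^z)-\frac{c}{z-p}$, and invoke the classical Wiener--Ikehara theorem for $V=A^{\frac12}BA^{\frac12}$. One small slip: your justification that $V^r\in\Lc_1$ for $r>p$ via ``$\Lc_{1,\infty}\cdot(\text{bounded})\subseteq\Lc_1$'' is false as written, since $\Lc_{1,\infty}\cdot\Lc_\infty=\Lc_{1,\infty}\not\subset\Lc_1$; the correct route is that $V\in\Lc_{p,\infty}$ gives $\mu(n,V^r)=\mu(n,V)^r=O(n^{-r/p})$ with $r/p>1$, hence $V^r\in\Lc_1$.
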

\begin{proof}
    Applying Theorem \ref{main_analytic_theorem} and the assumption, the function
    \[
        f_{A,B}(z)+F_{A,B}(z) = \Tr((A^{\frac12}BA^{\frac12})^z) - \frac{c}{z-p},\quad \re(z)\geq p.
    \]  
    admits a continuous extension to the closed half plane $\re(z)\geq p.$
    By the Wiener--Ikehara theorem, it follows that
    \[
        \lim_{t\to\infty} t\mu(t,(A^{\frac{1}{2}}BA^{\frac12})^p) = \frac{c}{p}.
    \]
\end{proof}
The statement in Theorem \ref{main_analytic_theorem} is reminiscent of some earlier zeta function estimates in noncommutative geometry, such as those in \cite{CGRS1} and \cite{CPS2003}, but the fundamental difference is that we allow the variable $z$ to be complex which allows us to strengthen the conclusion of the theorem. This necessitates the use of new tools such as Theorem \ref{main_analytic_theorem}, which was unavailable in \cite{CGRS1,CPS2003}.

The following lemma is a modification of a result in \cite{HSZ}.
\begin{lemma}\label{HSZ_modified_inequality}
    Let $p,q > 0,$ and $0\leq X \in \Lc_{q,\infty}.$ If $Y \in \Lc_{\infty}$ is such that $[X,Y] \in (\Lc_{q,\infty})_0,$
    then $[X^p,Y] \in (\Lc_{\frac{q}{p},\infty})_0.$
\end{lemma}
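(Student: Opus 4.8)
The plan is to prove Lemma~\ref{HSZ_modified_inequality} by separating the two regimes $p \leq 1$ and $p > 1$, since the behaviour of the commutator $[X^p, Y]$ as a function of $p$ is qualitatively different in each. For $0 < p \leq 1$, I would exploit the operator-monotonicity/concavity properties of the map $t \mapsto t^p$ and the integral representation $X^p = c_p \int_0^\infty \lambda^{p-1} X(X+\lambda)^{-1}\,d\lambda$ (valid for $0<p<1$), which gives
\[
    [X^p, Y] = c_p \int_0^\infty \lambda^{p-1} (X+\lambda)^{-1}[X,Y](X+\lambda)^{-1}\,d\lambda.
\]
One then estimates singular values: the factors $(X+\lambda)^{-1}$ are bounded by $\lambda^{-1}$ in operator norm but also improve the summability of $[X,Y] \in (\Lc_{q,\infty})_0$ through the spectral decay of $X$. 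Splitting the $\lambda$-integral at the dyadic scales governed by $\mu(t,X) \asymp t^{-1/q}$ and using the quasi-Banach property of $(\Lc_{q/p,\infty})_0$ gives the claim; the $(\cdot)_0$ (little-o rather than big-O) conclusion follows because the hypothesis is itself a little-o statement, so each dyadic block contributes a vanishing amount. For $p > 1$, write $p = n + r$ with $n \in \Ntrl$ and $0 \leq r < 1$, use the Leibniz-type identity $[X^{k+1},Y] = X[X^k,Y] + [X,Y]X^k$ to reduce integer powers inductively (each step trades one factor of $X \in \Lc_{q,\infty}$ for an improvement of the summability exponent via the Hölder inequality \eqref{weird_holder}-type bounds for $\Lc_{p,\infty}$ ideals), and then apply the already-established case $r \in [0,1)$ for the fractional part.

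Concretely, the key steps in order are: (1) reduce to $X$ bounded with $\|X\|\le 1$ by rescaling, so that $X^p$ makes sense for all $p>0$; (2) handle $0<p<1$ via the resolvent integral representation, estimating $\mu(2t, [X^p,Y]) \lesssim \int_0^\infty \lambda^{p-1}\mu(t,(X+\lambda)^{-1}[X,Y])\,d\lambda$ and then $\mu(t, (X+\lambda)^{-1}[X,Y]) \le \min(\lambda^{-1}, \mu(t/2,(X+\lambda)^{-1}))\cdot(\text{tail of }\mu([X,Y]))$, optimising the cutoff in $\lambda$ at $\lambda \sim \mu(t,X)$; (3) carry out the dyadic summation $\sum_m 2^{m p/q}\cdot(\text{block contribution})$, checking it is $o(t^{-p/q})$; (4) treat $p=1$ trivially and integer $p=n\ge 2$ by the Leibniz induction; (5) combine for general $p>1$ by writing $[X^p,Y] = [X^{\lfloor p\rfloor}\cdot X^{\{p\}},Y] = X^{\lfloor p\rfloor}[X^{\{p\}},Y] + [X^{\lfloor p\rfloor},Y]X^{\{p\}}$ and applying steps (2)--(4) together with the Hölder inequality for the product $X^{\lfloor p\rfloor}\in\Lc_{q/\lfloor p\rfloor,\infty}$ times $[X^{\{p\}},Y]\in(\Lc_{q/\{p\},\infty})_0$.

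The main obstacle I expect is step (3): controlling the $\lambda$-integral uniformly in $t$ so as to land in $(\Lc_{q/p,\infty})_0$ rather than merely $\Lc_{q/p,\infty}$. The big-$O$ version is the content of the cited result in \cite{HSZ}; upgrading to little-$o$ requires care because the resolvent weights $(X+\lambda)^{-1}$ are not themselves in a small ideal, so one must genuinely use that $[X,Y]$ is approximable by finite-rank operators and propagate that approximation through the integral — effectively, given $\varepsilon>0$, decompose $[X,Y]= R_\varepsilon + S_\varepsilon$ with $R_\varepsilon$ finite rank and $\|S_\varepsilon\|_{q,\infty}<\varepsilon$, observe $[X^p,Y]$ splits correspondingly, and show the $R_\varepsilon$-part lies in $(\Lc_{q/p,\infty})_0$ (indeed in every $\Lc_{s,\infty}$ with $s > q/p$, being a sum of rank-one-sandwiched terms with $X$-resolvent decay) while the $S_\varepsilon$-part has $\Lc_{q/p,\infty}$-quasinorm $\lesssim \varepsilon$. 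A secondary nuisance is keeping track of the quasi-norm triangle-inequality constants through the dyadic decomposition, but these are harmless since $p$, $q$ are fixed and the constants do not depend on $t$.
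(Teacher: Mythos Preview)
Your skeleton matches the paper's exactly: handle integer $p$ by the Leibniz expansion $[X^n,Y]=\sum_{k=0}^{n-1}X^k[X,Y]X^{n-1-k}$ and H\"older, treat $0<\theta<1$ separately, then combine the two by writing $p=n+\theta$ and applying Leibniz once more to $[X^{n+\theta},Y]=X^\theta[X^n,Y]+X^n[X^\theta,Y]$. The only substantive difference is that the paper does not prove the fractional case at all --- it simply invokes \cite[Corollary~7.1]{HSZ}, which is already quoted there in little-$o$ form
\[
    [X,Y]\in(\Lc_{q,\infty})_0 \ \Longrightarrow\ [X^\theta,Y]\in(\Lc_{q/\theta,\infty})_0,\qquad 0<\theta<1,
\]
so no separate ``upgrade from big-$O$ to little-$o$'' step is needed.

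Your attempt to reprove the fractional case from the resolvent integral is where the risk lies. Two remarks. First, a slip in the formula: since $X(X+\lambda)^{-1}=I-\lambda(X+\lambda)^{-1}$, one gets
\[
    [X^p,Y]=c_p\int_0^\infty \lambda^{p}\,(X+\lambda)^{-1}[X,Y](X+\lambda)^{-1}\,d\lambda,
\]
with $\lambda^p$, not $\lambda^{p-1}$. Second, and more importantly, the $R_\varepsilon+S_\varepsilon$ mechanism is not as automatic as you suggest. Writing $T(Z)=c_p\int_0^\infty\lambda^p(X+\lambda)^{-1}Z(X+\lambda)^{-1}d\lambda$, in the eigenbasis of $X$ this is the Schur multiplier by the divided difference $\frac{\mu_j^p-\mu_k^p}{\mu_j-\mu_k}$; on the diagonal that multiplier is $p\mu_j^{p-1}\to\infty$. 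Hence $T$ is \emph{not} bounded $\Lc_{q,\infty}\to\Lc_{q/p,\infty}$ on arbitrary inputs --- only on those with zero diagonal in the eigenbasis of $X$, i.e.\ on genuine commutators $[X,Y]$. An arbitrary finite-rank $R_\varepsilon$ (or the complementary $S_\varepsilon$) need not satisfy this constraint, so you cannot directly invoke the big-$O$ bound on $T(S_\varepsilon)$, nor is ``$T(R_\varepsilon)\in\Lc_{s,\infty}$ for every $s>q/p$'' clear. A workable repair is to keep the decomposition inside the class of commutators --- e.g.\ set $R_\varepsilon=[X,P_MYP_M]$ with $P_M$ a spectral projection of $X$ of finite rank, and $S_\varepsilon=[X,Y-P_MYP_M]$ --- but this is genuinely more work than your outline indicates, and essentially amounts to reconstructing the HSZ argument you are citing.
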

\begin{proof}
    Initially suppose that $p\in \Ntrl.$ In this case we have
    \[
        [X^p,Y] = \sum_{k=0}^{p-1} X^{k}[X,Y]X^{p-1-k} \in \sum_{k=0}^{p-1} \Lc_{\frac{q}{k},\infty}\cdot (\Lc_{q,\infty})_0\cdot \Lc_{\frac{q}{p-1-k},\infty}.
    \]
    This is contained in $(\Lc_{\frac{q}{p},\infty})_0$, by H\"older's inequality.
    
    For the general case, we apply \cite[Corollary 7.1]{HSZ} which implies that if $0 < \theta < 1$ then
    \[
        [X,Y] \in (\Lc_{q,\infty})_0 \Longrightarrow [X^\theta,Y] \in (\Lc_{\frac{q}{\theta},\infty})_0.
    \]
    If $p = n+\theta,$ where $n\in \Ntrl$ and $\theta \in (0,1),$ then by the Leibniz rule we have
    \[
        [X^p,Y] = X^\theta[X^n,Y] + X^n[X^\theta,Y] \in \Lc_{\frac{q}{\theta},\infty}\cdot (\Lc_{\frac{q}{n},\infty})_0+\Lc_{\frac{q}{n},\infty}\cdot (\Lc_{\frac{q}{\theta},\infty})_0.
    \]
    By H\"older's inequality, it follows that $[X^p,Y] \in (\Lc_{\frac{q}{p},\infty})_0.$
\end{proof}

In the next lemma we apply the main result of \cite{PS2011}, which implies that if $1<q<\infty$ and $f$ is a Lipschitz continuous function on $\Rl$,
then there exists a constant $C_{q}$ such that
\begin{equation}\label{ps_acta_inequality}
    \|[f(X),Y]\|_{q,\infty} \leq C_{q}\|f'\|_{\infty}\|[X,Y]\|_{q,\infty}
\end{equation}
for all self-adjoint bounded linear operators $X$ and $Y$ with $[X,Y] \in \Lc_{q,\infty}.$ There are also the implications
\begin{align*}
    [X,Y] \in \Lc_{q,\infty}&\Longrightarrow [f(X),Y] \in \Lc_{q,\infty},\\
    [X,Y] \in (\Lc_{q,\infty})_0&\Longrightarrow [f(X),Y] \in (\Lc_{q,\infty})_0.
\end{align*}

\begin{lemma}\label{from_csz_lemma}
Let $A\geq0,$ $B\geq0$ and $p>1$ are such that
\begin{enumerate}[{\rm (i)}]
\item $B\in\mathcal{L}_{p,\infty};$
\item $[A^{\frac12},B]\in(\mathcal{L}_{p,\infty})_0.$
\end{enumerate}
It follows that
    \[
        A^{\frac{p}{2}}B^pA^{\frac{p}{2}}-(A^{\frac{1}{2}}BA^{\frac12})^p\in (\Lc_{1,\infty})_0.
    \]
\end{lemma}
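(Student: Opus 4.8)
The plan is to reduce the difference $A^{p/2}B^pA^{p/2}-(A^{1/2}BA^{1/2})^p$ to a telescoping sum of commutator terms, each of which lands in $(\Lc_{1,\infty})_0$ by a H\"older argument. First I would dispose of the case $p\in\Ntrl$, where the identity
\[
    A^{\frac12}BA^{\frac12}\cdots A^{\frac12}BA^{\frac12}\quad(p\text{ factors of }B)
\]
can be compared with $B^p$ after commuting: writing $(A^{1/2}BA^{1/2})^p$ and repeatedly moving the inner $A^{1/2}$'s past the $B$'s using $[A^{1/2},B]$, one obtains $A^{p/2}B^pA^{p/2}$ plus a sum of terms each containing at least one factor $[A^{1/2},B]\in(\Lc_{p,\infty})_0$ together with factors $A^{j/2}$ and $B^k$. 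Since $B\in\Lc_{p,\infty}$, each $B^k\in\Lc_{p/k,\infty}$ and the $A^{j/2}$ are bounded, so by the H\"older inequality \eqref{weird_holder} (or rather its $\Lc_{p,\infty}$-ideal analogue) and Lemma \ref{HSZ_modified_inequality} each such term is in $(\Lc_{1,\infty})_0$: one factor of weak-$\ell_p$-small, and enough other $B$-powers to bring the total index down to $1$.

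For general $p>1$, I would interpolate between consecutive integers. Write $p=n+\theta$ with $n\in\Ntrl$, $\theta\in[0,1)$ — or, perhaps more cleanly, use an integral/analytic family argument. The cleanest route is probably: set $C=A^{1/2}BA^{1/2}$ and $C_0=A^{p/2}B^pA^{p/2}$ and show $C^p-$ (the ``separated'' expression) is controlled by introducing the one-parameter family $g(s)=A^{s/2}B^sA^{s/2}-(A^{1/2}BA^{1/2})^s$ and differentiating, but since $s\mapsto A^s$ is not differentiable in operator norm near $0$ this is delicate. Instead I expect the actual argument mirrors Lemma \ref{HSZ_modified_inequality}: first handle $p\in\Ntrl$ as above, then use the commutator estimate \eqref{ps_acta_inequality} and the result that $[X,Y]\in(\Lc_{q,\infty})_0\Rightarrow[f(X),Y]\in(\Lc_{q,\infty})_0$ applied to $f(t)=t^\theta$ to reduce fractional powers to the integer case. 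Concretely, one writes $A^{p/2}B^pA^{p/2}=A^{\theta/2}(A^{n/2}B^nA^{n/2})A^{\theta/2}\cdot(\text{correction})$ — no, more carefully one inserts $B^\theta$'s and $A^{\theta/2}$'s and tracks the fractional commutators $[A^{\theta/2},B]$ and $[B^\theta,A^{1/2}]$, each of which is $(\Lc_{p/\theta,\infty})_0$-small or better by Lemma \ref{HSZ_modified_inequality}, hence more than small enough.

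The main obstacle, I expect, is \textbf{bookkeeping}: keeping precise track in each of the (exponentially many, in $n$) cross terms of how many powers of $B$ appear versus how many commutator factors, to verify that the weak-Schatten indices always add up to at most $1$ and that at least one factor lies in the separable part $(\cdot)_0$ rather than merely in $\Lc_{1,\infty}$. The key quantitative input making this work is that each commutator $[A^{1/2},B]$ (or its fractional-power variants) costs a factor in $(\Lc_{p/1,\infty})_0$ — strictly better than the ``budget'' of $\Lc_{p,\infty}$ it replaces, because it is in the \emph{zero} (separable) ideal — while $B\in\Lc_{p,\infty}$ provides exactly the index $p$ that needs to be consumed by $p$-many factors. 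A secondary subtlety is justifying the fractional-power manipulations: one cannot naively write $(A^{1/2}BA^{1/2})^p$ in terms of $A^{1/2},B$ for non-integer $p$, so the reduction to the integer case via functional calculus and \eqref{ps_acta_inequality} must be done with some care, and it is probably cleanest to first prove the statement with $(\Lc_{1,\infty})_0$ replaced by $\Lc_{1,\infty}$ and then upgrade using density of finite-rank operators together with the observation that all the offending terms are norm-limits of operators with strictly smaller singular-value decay.
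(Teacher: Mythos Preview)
Your plan diverges substantially from the paper's proof, and the non-integer case has a real gap.

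The paper's argument is much shorter than what you outline: it does \emph{not} attack the difference $A^{p/2}B^pA^{p/2}-(A^{1/2}BA^{1/2})^p$ directly. Instead, it invokes \cite[Lemma~5.3]{CSZ}, which already gives
\[
    B^pA^p-(A^{1/2}BA^{1/2})^p\in(\Lc_{1,\infty})_0
\]
under these hypotheses (that lemma is proved via the integral representation of Theorem~\ref{csz key lemma}, not by commutator telescoping). The only work left is to show $B^pA^p-A^{p/2}B^pA^{p/2}=[B^p,A^{p/2}]A^{p/2}\in(\Lc_{1,\infty})_0$, which follows in two lines: first $[B,A^{p/2}]\in(\Lc_{p,\infty})_0$ from \eqref{ps_acta_inequality} applied to the Lipschitz function $t\mapsto t^p$ on a bounded interval (using $p>1$), and then Lemma~\ref{HSZ_modified_inequality} with $X=B$, $Y=A^{p/2}$, $q=p$ gives $[B^p,A^{p/2}]\in(\Lc_{1,\infty})_0$.

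Your integer-$p$ telescoping is fine and would work, but your fractional step is incomplete in a way that is not merely bookkeeping. Writing $p=n+\theta$ and splitting off $(A^{1/2}BA^{1/2})^\theta$ does not help: you still face a genuine fractional power of a \emph{product} $A^{1/2}BA^{1/2}$, and none of the tools you list (Lemma~\ref{HSZ_modified_inequality}, \eqref{ps_acta_inequality}, inserting $B^\theta$'s) gives you a handle on $(A^{1/2}BA^{1/2})^\theta$ in terms of $A^{\theta/2}$ and $B^\theta$ separately --- that is essentially the statement you are trying to prove, at a smaller exponent. The machinery that actually cracks this is the integral formula of Theorem~\ref{csz key lemma}, which expresses $B^zA^z-(A^{1/2}BA^{1/2})^z$ as an explicit integral of commutator terms for \emph{all} complex $z$ with $\re(z)>1$; this is precisely what \cite{CSZ} uses and what the paper is content to cite.
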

\begin{proof}
    It was proved in \cite[Lemma 5.3]{CSZ} that under these conditions
    \[
        B^pA^p - (A^{\frac12}BA^{\frac12})^p \in (\Lc_{1,\infty})_0.
    \]
    Thus, we only need to show that
    \begin{equation}\label{from_csz}
        B^pA^p-A^{\frac{p}{2}}B^pA^{\frac{p}{2}} = [B^p,A^{\frac{p}{2}}]A^{\frac{p}{2}} \in (\Lc_{1,\infty})_0.
    \end{equation}
  
    The function $f(t) := t^p$ is Lipschitz on the spectrum of $B$ (which is necessarily a bounded subset of $\Rl$). The second assumption and $p>1$ implies
    \[
        [B,A^{\frac{p}{2}}]\in (\Lc_{p,\infty})_0.
    \]
    Applying Lemma \ref{HSZ_modified_inequality} with $X = B$, $Y = A^{\frac{p}{2}}$ and $q=p$, it follows that
    \[
        [B^{p},A^{\frac{p}{2}}] \in (\Lc_{1,\infty})_0.
    \]
    This yields \eqref{from_csz}, which completes the proof.        
\end{proof}

The following lemma is due to Birman and Solomyak \cite[Lemma 4.1]{BS1971}.
\begin{lemma}\label{bs_perturbation_lemma}
    Let $T \in \Lc_{p,\infty}$ be an operator such that for every $\varepsilon>0$ there exists a decomposition
    \[
        T = T_{\varepsilon}'+T_{\varepsilon}''
    \]  
    such that there exists the limit
    \[
        \lim_{t\to\infty} t^{\frac{1}{p}}\mu(t,T'_{\varepsilon}) =: c(\varepsilon)
    \]
    and
    \[
        \limsup_{t\to\infty} t^{\frac{1}{p}}\mu(t,T_{\varepsilon}'') \leq \varepsilon.
    \]
    Then we have the existence and equality of the following limits:
    \[
        \lim_{t\to\infty} t^{\frac{1}{p}}\mu(t,T) = \lim_{\varepsilon\to 0}c(\varepsilon).
    \]
\end{lemma}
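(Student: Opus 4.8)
The plan is to prove Lemma~\ref{bs_perturbation_lemma} by exploiting the behaviour of the singular value function under sums, specifically the quasi-triangle inequality $\mu(t+s, X+Y) \le \mu(t,X) + \mu(s,Y)$, applied with $X = T'_\varepsilon$ and $Y = T''_\varepsilon$. First I would fix $\varepsilon > 0$ and split $t \mapsto \mu(t,T)$ at the midpoint $t/2$: from $T = T'_\varepsilon + T''_\varepsilon$ we get
\[
    \mu(t, T) \le \mu(t/2, T'_\varepsilon) + \mu(t/2, T''_\varepsilon),
\]
and multiplying by $t^{1/p}$ and taking $\limsup$ as $t \to \infty$ yields
\[
    \limsup_{t\to\infty} t^{\frac1p}\mu(t,T) \le 2^{\frac1p}\bigl(c(\varepsilon) + \varepsilon\bigr).
\]
By a symmetric argument, writing $T'_\varepsilon = T - T''_\varepsilon$, we get $\mu(t,T'_\varepsilon) \le \mu(t/2,T) + \mu(t/2,T''_\varepsilon)$, hence
\[
    c(\varepsilon) = \lim_{t\to\infty} t^{\frac1p}\mu(t,T'_\varepsilon) \le 2^{\frac1p}\Bigl(\liminf_{t\to\infty} t^{\frac1p}\mu(t,T) + \varepsilon\Bigr).
\]

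The factor $2^{1/p}$ is an artefact of the crude midpoint split and must be removed; this is the main technical point. The standard device is to split at $(1-\delta)t$ instead of $t/2$: since $t^{1/p}\mu((1-\delta)t, T'_\varepsilon) \to (1-\delta)^{-1/p} c(\varepsilon)$ and $t^{1/p}\mu(\delta t, T''_\varepsilon)$ has $\limsup \le \delta^{-1/p}\varepsilon$, one obtains
\[
    \limsup_{t\to\infty} t^{\frac1p}\mu(t,T) \le (1-\delta)^{-\frac1p} c(\varepsilon) + \delta^{-\frac1p}\varepsilon
\]
for every $\delta \in (0,1)$. Now I would let $\varepsilon \to 0$ first. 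The hypotheses guarantee $c(\varepsilon)$ is decreasing (or at least that $\limsup_{\varepsilon\to 0} c(\varepsilon)$ exists and is finite, being bounded by $\|T\|_{p,\infty}$ up to constants); set $c_0 := \lim_{\varepsilon\to 0} c(\varepsilon)$, which exists after possibly first establishing monotonicity or simply passing to the limit inferior and superior separately. Sending $\varepsilon \to 0$ in the last display gives $\limsup_{t\to\infty} t^{1/p}\mu(t,T) \le (1-\delta)^{-1/p} c_0$, and then $\delta \to 0$ gives $\limsup_{t\to\infty} t^{1/p}\mu(t,T) \le c_0$. The matching lower bound $\liminf_{t\to\infty} t^{1/p}\mu(t,T) \ge c_0$ comes from the symmetric split $\mu((1-\delta)t, T'_\varepsilon) \le \mu(t,T) + \mu(\delta t, T''_\varepsilon)$ run the same way: $(1-\delta)^{-1/p} c(\varepsilon) \le \liminf_{t\to\infty} t^{1/p}\mu(t,T) + \delta^{-1/p}\varepsilon$, then $\varepsilon \to 0$ and $\delta \to 0$.

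The main obstacle I anticipate is bookkeeping around the existence of $\lim_{\varepsilon\to 0} c(\varepsilon)$: a priori the lemma only asserts the existence of each $c(\varepsilon)$, not that these converge as $\varepsilon \to 0$. The clean way around this is to never name $c_0$ prematurely: from the two chains of inequalities one gets, for all small $\delta$ and all $\varepsilon$,
\[
    (1-\delta)^{-\frac1p} c(\varepsilon) - \delta^{-\frac1p}\varepsilon \ \le\ \liminf_{t\to\infty} t^{\frac1p}\mu(t,T) \ \le\ \limsup_{t\to\infty} t^{\frac1p}\mu(t,T) \ \le\ (1-\delta)^{-\frac1p} c(\varepsilon) + \delta^{-\frac1p}\varepsilon.
\]
This already shows $t^{1/p}\mu(t,T)$ has a genuine limit $L$ and that $|L - (1-\delta)^{-1/p}c(\varepsilon)| \le \delta^{-1/p}\varepsilon$ for all $\varepsilon, \delta$; fixing $\delta$ and letting $\varepsilon \to 0$ forces $c(\varepsilon) \to (1-\delta)^{1/p} L$, and then letting $\delta \to 0$ identifies $\lim_{\varepsilon\to 0} c(\varepsilon) = L$. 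Thus both the existence of $\lim_{t\to\infty} t^{1/p}\mu(t,T)$ and its equality with $\lim_{\varepsilon\to 0} c(\varepsilon)$ fall out simultaneously, which is exactly the assertion of the lemma.
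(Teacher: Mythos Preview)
The paper does not actually prove this lemma; it is stated with attribution to Birman and Solomyak \cite[Lemma~4.1]{BS1971} and no argument is given in the text, so there is nothing in the paper to compare against. Your approach is the standard Birman--Solomyak argument and is correct in outline: use the subadditivity $\mu(s+r,X+Y)\le\mu(s,X)+\mu(r,Y)$ with an asymmetric split at $(1-\delta)t$ and $\delta t$, obtain two-sided bounds on $\limsup$ and $\liminf$ of $t^{1/p}\mu(t,T)$ in terms of $c(\varepsilon)$, and then send $\varepsilon\to0$ followed by $\delta\to0$. Your final paragraph, extracting the existence of $\lim_{\varepsilon\to0}c(\varepsilon)$ and of $\lim_{t\to\infty}t^{1/p}\mu(t,T)$ simultaneously from the sandwich, is clean and avoids the bookkeeping trap you anticipated.

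There is one small index slip to fix. In the ``symmetric split'' for the lower bound you write
\[
    \mu((1-\delta)t, T'_\varepsilon) \le \mu(t,T) + \mu(\delta t, T''_\varepsilon),
\]
but subadditivity requires the indices on the right to sum to the index on the left, and $t+\delta t=(1+\delta)t\neq(1-\delta)t$. The valid versions are either
\[
    \mu((1+\delta)t, T'_\varepsilon) \le \mu(t,T) + \mu(\delta t, T''_\varepsilon)
\]
or, equivalently after relabelling,
\[
    \mu(t, T'_\varepsilon) \le \mu((1-\delta)t,T) + \mu(\delta t, T''_\varepsilon).
\]
Either one yields, after multiplying by $t^{1/p}$ and taking $\liminf$ (using $\liminf a_t\le\liminf b_t+\limsup c_t$ when $a_t\le b_t+c_t$), an inequality of the form
\[
    (1+\delta)^{-1/p}c(\varepsilon)-\delta^{-1/p}\varepsilon \le \liminf_{t\to\infty}t^{1/p}\mu(t,T),
\]
which is slightly weaker than the $(1-\delta)^{-1/p}c(\varepsilon)-\delta^{-1/p}\varepsilon$ you wrote but still gives the correct conclusion after $\varepsilon\to0$ and $\delta\to0$. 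With this correction your proof is complete.
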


It is helpful to have the following consequence of Lemma \ref{bs_perturbation_lemma}, which essentially states that the set of $X \in \Lc_{p,\infty}$ such that $\{n^{\frac{1}{p}}\mu(n,X)\}_{n\geq 0}$ is convergent
is closed.
\begin{lemma}\label{bs_perturbation_lemma_sequential}
Let $p>0.$ Suppose $\{X_k\}_{k\geq0}\subset\mathcal{L}_{p,\infty}$ are such that, for every $k\geq0,$ there exists a limit
\[
    \lim_{t\to\infty}t^{\frac{1}{p}}\mu(t,X_k)=a_k.
\]
Suppose $X_k\to X$ in $\mathcal{L}_{p,\infty}.$ It follows that the sequences $\{n^{\frac{1}{p}}\mu(n,X)\}_{n\geq 0}$ and $\{a_k\}_{k\geq0}$ both converge, and moreover to the same limit. That is,
\[
    \lim_{t\to\infty}t^{\frac{1}{p}}\mu(t,X)=\lim_{k\to\infty}a_k.
\]
\end{lemma}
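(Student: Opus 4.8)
The plan is to deduce Lemma \ref{bs_perturbation_lemma_sequential} directly from Lemma \ref{bs_perturbation_lemma} by treating the difference $X-X_k$ as the small remainder $T_\varepsilon''$. First I would fix $\varepsilon>0$ and, using $X_k\to X$ in $\mathcal{L}_{p,\infty}$, choose $k$ so large that $\|X-X_k\|_{p,\infty}\leq\varepsilon$. Then I would write the decomposition $X = X_k + (X-X_k)$, setting $T_\varepsilon' := X_k$ and $T_\varepsilon'' := X-X_k$. By hypothesis $\lim_{t\to\infty}t^{1/p}\mu(t,T_\varepsilon') = a_k$ exists, and by the definition of the quasinorm $\limsup_{t\to\infty}t^{1/p}\mu(t,T_\varepsilon'')\leq\sup_{t\geq 0}t^{1/p}\mu(t,X-X_k) = \|X-X_k\|_{p,\infty}\leq\varepsilon$. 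Hence Lemma \ref{bs_perturbation_lemma} applies (with the ``$c(\varepsilon)$'' of that lemma equal to $a_{k(\varepsilon)}$ for the chosen index), giving that $\lim_{t\to\infty}t^{1/p}\mu(t,X)$ exists and equals $\lim_{\varepsilon\to 0}a_{k(\varepsilon)}$.

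The one subtlety is that Lemma \ref{bs_perturbation_lemma} as stated requires a decomposition for \emph{every} $\varepsilon>0$, whereas a priori I only have one index $k$ per $\varepsilon$; but this is immediate, since for each $\varepsilon$ I simply pick any $k = k(\varepsilon)$ with $\|X-X_k\|_{p,\infty}\leq\varepsilon$ (possible by convergence), and the function $\varepsilon\mapsto a_{k(\varepsilon)}$ is exactly the $c(\varepsilon)$ to which the lemma refers. The lemma then tells me simultaneously that $\lim_{t\to\infty}t^{1/p}\mu(t,X)$ exists and that $\lim_{\varepsilon\to 0}a_{k(\varepsilon)}$ exists and the two coincide.

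It remains to upgrade ``$\lim_{\varepsilon\to 0}a_{k(\varepsilon)}$ exists'' to ``$\lim_{k\to\infty}a_k$ exists'' with the same value. This follows because the conclusion of Lemma \ref{bs_perturbation_lemma} does not depend on which admissible decomposition is used: for any sequence $\varepsilon_j\downarrow 0$ and any choice of indices $k_j$ with $\|X-X_{k_j}\|_{p,\infty}\leq\varepsilon_j$, the limit $\lim_j a_{k_j}$ equals the fixed number $\lim_{t\to\infty}t^{1/p}\mu(t,X)$. Since $X_k\to X$, \emph{every} subsequence of $\{a_k\}$ along which the indices have norm-differences tending to $0$ — and by passing to a subsequence of any given subsequence we can always arrange this — converges to that same number; a standard subsequence argument then forces the full sequence $\{a_k\}$ to converge to $\lim_{t\to\infty}t^{1/p}\mu(t,X)$. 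This establishes both claimed convergences and their equality.

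I do not expect any real obstacle here: the content is entirely in Lemma \ref{bs_perturbation_lemma}, and the present lemma is a soft reformulation. The only point requiring a line of care is the bookkeeping in the last paragraph — translating the ``$\varepsilon$-indexed'' limit produced by Lemma \ref{bs_perturbation_lemma} into convergence of the honest sequence $\{a_k\}_{k\geq 0}$ — but this is a routine subsequence argument once one observes that the limiting value $\lim_{t\to\infty}t^{1/p}\mu(t,X)$ is independent of all choices.
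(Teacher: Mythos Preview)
Your proposal is correct and is precisely the route the paper has in mind: the paper does not write out a proof of this lemma at all, but introduces it with the sentence ``It is helpful to have the following consequence of Lemma~\ref{bs_perturbation_lemma}\ldots'', so deducing it from Lemma~\ref{bs_perturbation_lemma} via the decomposition $X=X_k+(X-X_k)$ is exactly what is intended. One small simplification in your final paragraph: since $X_k\to X$ in $\mathcal{L}_{p,\infty}$, \emph{every} subsequence already has $\|X-X_{k_j}\|_{p,\infty}\to 0$, so no further passage to a sub-subsequence is needed; the contradiction argument (assume some subsequence $\{a_{k_j}\}$ stays $\delta$-away from $L$, then feed that very subsequence into Lemma~\ref{bs_perturbation_lemma} to get $\lim_{\varepsilon\to 0} c(\varepsilon)=L$ while each $c(\varepsilon)=a_{k_{j(\varepsilon)}}$ is $\delta$-away from $L$) goes through directly.
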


The following is an easy consequence.
\begin{corollary}\label{simplified_bs_perturbation_lemma} Let $p>0.$ If $X,Y\in\mathcal{L}_{p,\infty}$ are such that $X-Y\in(\mathcal{L}_{p,\infty})_0,$ then
$$\lim_{t\to\infty}t\mu(t,X)^p=\lim_{t\to\infty}t\mu(t,Y)^p$$
provided that either limit exists.
\end{corollary}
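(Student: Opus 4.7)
The plan is to deduce the corollary as an essentially immediate consequence of Lemma \ref{bs_perturbation_lemma}. A preliminary observation is that raising to the $p$-th power shows $\lim_{t\to\infty} t\mu(t,X)^p = \lim_{t\to\infty} t\mu(t,Y)^p$ is equivalent to $\lim_{t\to\infty} t^{1/p}\mu(t,X) = \lim_{t\to\infty} t^{1/p}\mu(t,Y)$, so it suffices to work with the $t^{1/p}\mu(t,\cdot)$ normalisation used in Lemma \ref{bs_perturbation_lemma}.

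First I would assume, without loss of generality, that the limit on the $Y$ side is the one that exists; the case in which the limit for $X$ is assumed is handled symmetrically, using that $(X-Y)\in(\Lc_{p,\infty})_0$ iff $(Y-X)\in(\Lc_{p,\infty})_0$ and interchanging the roles of $X$ and $Y$ in what follows. I would then apply Lemma \ref{bs_perturbation_lemma} to $T := X$ with the trivial, $\varepsilon$-independent decomposition
\[
    T_{\varepsilon}' := Y, \qquad T_{\varepsilon}'' := X - Y.
\]
The assumed limit supplies $c(\varepsilon) = \lim_{t\to\infty} t^{1/p}\mu(t,Y)$ (which is in fact independent of $\varepsilon$), and the hypothesis $X - Y \in (\Lc_{p,\infty})_0$ is by definition the statement $\lim_{t\to\infty} t^{1/p}\mu(t,X-Y) = 0$, so $\limsup_{t\to\infty} t^{1/p}\mu(t,T_{\varepsilon}'') = 0 \leq \varepsilon$ holds for every $\varepsilon > 0$.

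Lemma \ref{bs_perturbation_lemma} then yields
\[
    \lim_{t\to\infty} t^{1/p}\mu(t,X) = \lim_{\varepsilon \to 0} c(\varepsilon) = \lim_{t\to\infty} t^{1/p}\mu(t,Y),
\]
and raising to the $p$-th power gives the corollary. There is no real obstacle: the entire argument amounts to packaging Lemma \ref{bs_perturbation_lemma} in the degenerate case where the decomposition does not actually depend on $\varepsilon$, the point being that an $(\Lc_{p,\infty})_0$ perturbation is \emph{uniformly} negligible in the sense required by the $\limsup$ hypothesis.
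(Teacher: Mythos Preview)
Your proof is correct. The paper does not spell out an argument, merely recording the corollary as ``an easy consequence'' placed immediately after Lemma~\ref{bs_perturbation_lemma_sequential}, so the intended route is presumably via that sequential version (e.g.\ approximating $X-Y$ by finite rank operators $F_k$ and applying Lemma~\ref{bs_perturbation_lemma_sequential} to $X_k:=Y+F_k\to X$). Your route bypasses Lemma~\ref{bs_perturbation_lemma_sequential} entirely and goes straight to the underlying Birman--Solomyak Lemma~\ref{bs_perturbation_lemma} with the trivial $\varepsilon$-independent decomposition; this is if anything cleaner, since it avoids the auxiliary step of checking that finite rank perturbations do not alter the asymptotic. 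The two approaches are minor variants of the same idea.
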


We now complete the proof of the Tauberian theorem in the form stated in Section \ref{overview_section}.
\begin{theorem}\label{tauberian_corollary}
    Under the conditions of Theorem \ref{main_tauberian_theorem}, we have
    \[
        \lim_{t\to \infty} t\mu(t,B^{\frac{p}{2}}A^pB^{\frac{p}{2}}) = \frac{c}{p}.
    \]
\end{theorem}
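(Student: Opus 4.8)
The plan is to reduce the statement for $B^{\frac{p}{2}}A^pB^{\frac{p}{2}}$ to the already-established statement of Theorem~\ref{main_tauberian_theorem} for $(A^{\frac12}BA^{\frac12})^p$, using the fact that the two operators have the same singular value asymptotics. The first and most important observation is that $B^{\frac p2}A^pB^{\frac p2}$ and $(A^{\frac12}BA^{\frac12})^p$ are unitarily equivalent to operators that differ by a term in $(\Lc_{1,\infty})_0$; then Corollary~\ref{simplified_bs_perturbation_lemma} finishes the job. To make this precise, I would first note the elementary unitary equivalence $\mu(t,B^{\frac p2}A^pB^{\frac p2}) = \mu(t, A^{\frac p2}B^pA^{\frac p2})$, which holds because for any $X,Y\geq 0$ the operators $X^{\frac12}YX^{\frac12}$ and $Y^{\frac12}XY^{\frac12}$ have the same nonzero spectrum (with multiplicity) — apply this with $X = A^p$, $Y = B^p$. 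So it suffices to compare $A^{\frac p2}B^pA^{\frac p2}$ with $(A^{\frac12}BA^{\frac12})^p$.

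Next I would invoke Lemma~\ref{from_csz_lemma}, whose conclusion is exactly
\[
    A^{\frac{p}{2}}B^pA^{\frac{p}{2}}-(A^{\frac{1}{2}}BA^{\frac12})^p\in (\Lc_{1,\infty})_0 .
\]
The only point to check is that the hypotheses of Lemma~\ref{from_csz_lemma} follow from Condition~\ref{compact conditions for analyticity}. Hypothesis (i), $B\in\Lc_{p,\infty}$, is literally item~\eqref{ccond1}. For hypothesis (ii) I need $[A^{\frac12},B]\in(\Lc_{p,\infty})_0$, whereas Condition~\ref{compact conditions for analyticity}\eqref{ccond2} only gives $[B,A^{\frac12}]\in\Lc_{\frac p2,\infty}$. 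The bridge is that $B\in\Lc_{p,\infty}$ and $A^{\frac12}\in\Lc_\infty$ force $[B,A^{\frac12}]\in\Lc_{p,\infty}$ trivially (it is a bounded operator times compact operators in $\Lc_{p,\infty}$), but more is needed: I claim $[A^{\frac12},B]$ actually lies in the \emph{separable} part $(\Lc_{p,\infty})_0$. One way to see this: $B$ can be approximated in $\Lc_{p,\infty}$-norm — no, that is false in general — so instead use that $[A^{\frac12},B] = [A^{\frac12},B]$ is the product of something in $\Lc_{\frac p2,\infty}$ with... hmm, the cleanest route is that $[B,A^{\frac12}]\in\Lc_{p/2,\infty}$ and $\Lc_{p/2,\infty}\subset (\Lc_{p,\infty})_0$ whenever $p/2 < p$, since any $\Lc_{q,\infty}$ operator with $q<p$ satisfies $t^{1/p}\mu(t,\cdot)\to 0$. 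This inclusion is immediate from the definitions: if $\mu(t,T) = O(t^{-2/p})$ then $t^{1/p}\mu(t,T)=O(t^{-1/p})\to 0$. Hence $[A^{\frac12},B]\in(\Lc_{p,\infty})_0$ and Lemma~\ref{from_csz_lemma} applies.

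Finally, since $A^{\frac p2}B^pA^{\frac p2}$ and $(A^{\frac12}BA^{\frac12})^p$ differ by an element of $(\Lc_{1,\infty})_0$, and both lie in $\Lc_{1,\infty}$ (the latter does by Theorem~\ref{main_tauberian_theorem} and its hypotheses, which give the existence of $\lim_t t\mu(t,(A^{\frac12}BA^{\frac12})^p)$; the former then does too), Corollary~\ref{simplified_bs_perturbation_lemma} with $p\mapsto 1$ gives
\[
    \lim_{t\to\infty} t\mu(t, A^{\frac p2}B^pA^{\frac p2}) = \lim_{t\to\infty} t\mu(t,(A^{\frac12}BA^{\frac12})^p) = \frac{c}{p},
\]
the last equality being Theorem~\ref{main_tauberian_theorem}. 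Combining with the unitary equivalence from the first step yields $\lim_{t\to\infty} t\mu(t,B^{\frac p2}A^pB^{\frac p2}) = \frac cp$, as claimed. I expect the only mildly delicate point to be the bookkeeping that upgrades Condition~\ref{compact conditions for analyticity}\eqref{ccond2} to the $(\Lc_{p,\infty})_0$ hypothesis needed by Lemma~\ref{from_csz_lemma}; everything else is an assembly of results already in hand.
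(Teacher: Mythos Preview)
Your proposal is correct and follows essentially the same route as the paper: invoke Lemma~\ref{from_csz_lemma} to get $A^{\frac{p}{2}}B^pA^{\frac{p}{2}}-(A^{\frac12}BA^{\frac12})^p\in(\Lc_{1,\infty})_0$, apply Corollary~\ref{simplified_bs_perturbation_lemma}, and use that $A^{\frac p2}B^pA^{\frac p2}$ and $B^{\frac p2}A^pB^{\frac p2}$ have the same nonzero spectrum. You are in fact slightly more careful than the paper in spelling out why Condition~\ref{compact conditions for analyticity}\eqref{ccond2} yields the hypothesis $[A^{\frac12},B]\in(\Lc_{p,\infty})_0$ of Lemma~\ref{from_csz_lemma}, via the inclusion $\Lc_{p/2,\infty}\subset(\Lc_{p,\infty})_0$.
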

\begin{proof}
    Combining Theorem \ref{main_tauberian_theorem} with Lemma \ref{from_csz_lemma} yields
    \[
        \lim_{t\to\infty} t\mu(t,(A^{\frac{1}{2}}BA^{\frac{1}{2}})^p) = \frac{c}{p},\quad A^{\frac{p}{2}}B^pA^{\frac{p}{2}} - (A^{\frac{1}{2}}BA^{\frac{1}{2}})^p \in (\Lc_{1,\infty})_0.
    \]
    It follows from Corollary \ref{simplified_bs_perturbation_lemma} that
    \[
        \lim_{t\to\infty} t\mu(t,A^{\frac{p}{2}}B^pA^{\frac{p}{2}}) = \frac{c}{p}.
    \]
    The operators $A^{\frac{p}{2}}B^pA^{\frac{p}{2}}$ and $B^{\frac{p}{2}}A^pB^{\frac{p}{2}}$ have identical nonzero spectra, which implies that
    \[
        \mu(A^{\frac{p}{2}}B^pA^{\frac{p}{2}}) = \mu(B^{\frac{p}{2}}A^pB^{\frac{p}{2}}).
    \]
    This completes the proof.
\end{proof}

\subsection{Integral representation for differences of complex powers}
We now proceed with the proof of the Theorem \ref{main_analytic_theorem}. The proof is based on the following representation of the difference
\[
    (A^{\frac12}BA^{\frac12})^z-B^zA^z
\]
which was first stated in \cite[Lemma 5.2]{CSZ}, and later strengthened in \cite[Theorem 5.2.1]{SZ-asterisque}.
\begin{theorem}\label{csz key lemma} 
Let $A$ and $B$ be bounded, positive operators on $H$, and let $z \in \mathbb{C}$ with $\re(z) > 1$. Let $Y := A^{\frac12}BA^{\frac12}.$ We define the mapping $T_z:\mathbb{R}\to \mathcal{L}_\infty$ by,
\begin{align*}
T_z(0) &:= B^{z-1}[BA^{\frac{1}{2}},A^{z-\frac{1}{2}}]+[BA^{\frac{1}{2}},A^{\frac{1}{2}}]Y^{z-1},\\
T_z(s) &:= B^{z-1+is}[BA^{\frac{1}{2}},A^{z-\frac{1}{2}+is}]Y^{-is}+B^{is}[BA^{\frac{1}{2}},A^{\frac{1}{2}+is}]Y^{z-1-is},\quad s \neq 0.
\end{align*}
We also define the function $g_z:\mathbb{R}\to \mathbb{C}$ by:
\begin{align*}
g_z(0) &:= 1-\frac{z}{2},\\
g_z(t) &:= 1-\frac{e^{\frac{z}{2}t}-e^{-\frac{z}{2}t}}{(e^{\frac{t}{2}}-e^{-\frac{t}{2}})(e^{\left(\frac{z-1}{2}\right)t}+e^{-\left(\frac{z-1}{2}\right)t})}.
\end{align*} 
Then:
\begin{enumerate}[{\rm (i)}]
\item{} The mapping $T_z:\mathbb{R}\to \mathcal{L}_\infty$ is continuous in the weak operator topology.
\item{} We have:
\begin{equation*}
B^zA^z-(A^{\frac{1}{2}}BA^{\frac{1}{2}})^z = T_z(0)-\int_{\mathbb{R}} T_z(s)\widehat{g}_z(s)\,ds. 
\end{equation*}
\end{enumerate}
\end{theorem}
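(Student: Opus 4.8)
The plan is to prove the operator identity \emph{(ii)} by reducing the complex powers $B^zA^z$ and $Y^z=(A^{\frac12}BA^{\frac12})^z$ to the bounded \emph{imaginary} powers $B^{is},A^{is},Y^{is}$, whose dependence on $s$ is governed by elementary scalar Fourier identities; the function $g_z$ will turn out to be exactly the one these identities produce, and $T_z(0)$ the ``diagonal'' contribution. Part \emph{(i)} comes out as a byproduct, so I would dispose of it first.

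For \emph{(i)}: writing $A^{z-\frac12+is}=A^{z-\frac12}A^{is}$ and likewise for the other shifted powers, $s\mapsto T_z(s)$ is a finite sum of products of fixed bounded operators, the norm-continuous commutator maps $X\mapsto[BA^{\frac12},X]$ applied to the strongly continuous families $s\mapsto A^{z-\frac12+is},\,A^{\frac12+is}$, and the uniformly bounded strongly continuous families $s\mapsto B^{is},A^{is},Y^{\pm is}$. Since a product of uniformly bounded strongly continuous operator functions is strongly continuous, $T_z$ is strong-operator, hence weak-operator, continuous on $\mathbb{R}\setminus\{0\}$, and continuity at $s=0$ is obtained by comparing with the formula for $T_z(0)$, the imaginary powers tending to the identity as $s\to0$.

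For \emph{(ii)} I would first treat invertible $A$ and $B$, where every object in sight is norm-continuous in $s$ and the computation is transparent, and then obtain the general positive case by a limiting argument (as in \cite{SZ-asterisque}; the only delicacy is that imaginary powers are not norm-continuous at non-invertible operators, so the limit must be taken strongly, the integral term being controlled by the uniform bound $\|T_z(s)\|\le C(z)$ and by $\widehat g_z\in L_1(\mathbb{R})$ — the latter holding because $g_z$ extends to a bounded real-analytic function on $\mathbb{R}$, its singularity at $t=0$ being removable as is visible from $g_z(t)=1-\frac{1-e^{-zt}}{(1-e^{-t})(1+e^{-(z-1)t})}$ for $t>0$, with exponential decay at $\pm\infty$, so that $\widehat g_z$ is rapidly decreasing). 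For invertible $A,B$, expanding the commutators in $T_z(0)$ shows $T_z(0)=B^zA^z-Y^z-B^{z-1}A^{z-\frac12}BA^{\frac12}+BA\,Y^{z-1}$, so that \emph{(ii)} becomes the single identity
\[
    \int_{\mathbb{R}} T_z(s)\,\widehat g_z(s)\,ds = BA\,Y^{z-1}-B^{z-1}A^{z-\frac12}BA^{\frac12}.
\]
To verify this I would represent $B^w,A^w,Y^w$ (for $\re w>0$ and spectrum in a fixed bounded interval) as integrals of the unitaries $B^{is},A^{is},Y^{is}$ against suitable kernels, insert these into $B^zA^z$ and $Y^z$, and compute the matrix coefficients of $B^zA^z-Y^z$ as a multiple Fourier integral over the joint spectral parameters of $\log B,\log A,\log Y$, using $A^{-\frac12}(\log Y)A^{\frac12}=\log(BA)$ to link them. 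Performing the integrations over which the kernels combine into elementary expressions — by means of the Beta integral $\int_{\mathbb{R}}\frac{e^{wu}}{1+e^u}\,du=\frac{\pi}{\sin\pi w}$ and the Fourier transforms of $\sinh(w\cdot)/\sinh(\cdot)$-type functions — leaves a diagonal term (from the coincidence of two spectral parameters) equal to $T_z(0)$, minus an off-diagonal remainder of the shape $\int_{\mathbb{R}}T_z(s)\widehat g_z(s)\,ds$, with $g_z$ forced to be precisely the function in the statement. It is convenient to run the computation first for $\re z$ large (where the kernels are simplest) and extend to $\re z>1$ by analytic continuation, both sides being holomorphic in $z$.

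I expect the genuine difficulty to be the last step: verifying that the multiple-integral computation yields exactly this $g_z$ and exactly this $T_z(0)$ — i.e. that the otherwise unmotivated shapes of $g_z$ and $T_z$ are the ones the bookkeeping demands — requires careful tracking of the three spectral variables and of the exact form of the kernels, rather than any single conceptual idea. A secondary technical point is the passage from invertible to general positive $A,B$, which cannot be carried out by the naive perturbation $A\to A+\varepsilon$, since imaginary powers of $A+\varepsilon$ fail to converge on $\ker A$.
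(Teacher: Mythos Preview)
The paper does not supply its own proof of this theorem: it is quoted verbatim as a known representation, first stated in \cite[Lemma 5.2]{CSZ} and strengthened in \cite[Theorem 5.2.1]{SZ-asterisque}, and then used as a black box. So there is no ``paper's proof'' to compare against; your proposal is a sketch of how one might prove the cited result.

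On its own merits: your treatment of part \emph{(i)} is fine and is essentially the argument one finds in the cited references. For part \emph{(ii)}, your overall strategy --- reduce to invertible $A,B$, express real powers through imaginary powers via a Fourier/Mellin kernel, and read off $g_z$ from the resulting convolution --- is the right shape and is in the spirit of the proof in \cite{SZ-asterisque}. The intertwining relation $A^{-\frac12}Y^wA^{\frac12}=(BA)^w$ that you invoke is exactly what drives the computation there. However, your proposal is honest about where it stops being a proof: you have not actually carried out the kernel computation that produces this particular $g_z$ and this particular $T_z$, and you flag that as the ``genuine difficulty''. That is accurate --- the identity is not conceptually deep but it is a nontrivial bookkeeping exercise, and until it is done you have an outline, not a proof. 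Your remark that the passage to non-invertible $A,B$ cannot proceed via $A\mapsto A+\varepsilon$ because imaginary powers fail to converge on $\ker A$ is also correct and is handled in \cite{SZ-asterisque} by working instead on the complement of the kernel; you would need to supply the analogous argument.
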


For $\re(z) > 1$, the function $g_z$ belongs to the Schwartz class $\Sc(\Rl)$ \cite[Remark 5.2.2]{SZ-asterisque}, and hence so is the Fourier transform $\widehat{g}_z,$
which we normalise as
\[
    \widehat{g}_z(\xi) = \int_{-\infty}^\infty e^{-i\xi t}g_z(t)\,dt.
\]

\begin{remark}\label{normalisation remark} In what follows, we assume without loss of generality that $\|B\|_{p,\infty}=1$ and that $\|A\|=1.$ In particular, this implies $\|Y\|_{p,\infty}\leq 1.$
\end{remark}

We will now proceed to estimate the individual summands in $T_z(s)$ from Theorem \ref{csz key lemma}.

\begin{lemma}\label{first psacta lemma} 
Let $p>2$ and let $A,B\in\mathcal{L}_{\infty}$ satisfy Condition \ref{compact conditions for analyticity} and normalised as in Remark \ref{normalisation remark}. We have
$$\|[BA^{\frac{1}{2}},A^{z-\frac{1}{2}+is}]\|_{\frac{p}{2},\infty}\leq c_p(1+|z|+|s|)\|[BA^{\frac{1}{2}},A^{\frac12}]\|_{\frac{p}{2},\infty},\quad\re(z)\geq 1,$$
$$\|[BA^{\frac{1}{2}},A^{\frac{1}{2}+is}]\|_{\frac{p}{2},\infty}\leq c_p(1+|s|)\|[BA^{\frac{1}{2}},A^{\frac12}]\|_{\frac{p}{2},\infty}.$$
\end{lemma}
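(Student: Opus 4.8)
The plan is to derive both inequalities from a single application of the commutator estimate \eqref{ps_acta_inequality}. Set $X := A^{\frac12}$ and $C := BA^{\frac12}$. By Remark \ref{normalisation remark}, $X$ is self-adjoint with $\spec(X) \subseteq [0,1]$, while $C$ is a bounded, not necessarily self-adjoint, operator. For $w \in \Cplx$ with $\re(w) \ge \tfrac12$ define $f_w(t) := t^{2w}$ on $[0,1]$, setting $f_w(0):=0$ (admissible since $\re(2w)\ge 1>0$); then $f_w(X) = (A^{\frac12})^{2w} = A^{w}$ by the continuous functional calculus. Since $A^{\frac12}$ commutes with every complex power of $A$,
\[
    [BA^{\frac12}, A^{w}] = [B, A^{w}]A^{\frac12} = -[A^{w}, BA^{\frac12}] = -[f_w(X), C],
\]
and, taking $w=\tfrac12$, $[BA^{\frac12}, A^{\frac12}] = -[X, C] = -[B,A^{\frac12}]A^{\frac12}$. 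The first claimed estimate is the case $w = z - \tfrac12 + is$ (so $\re(w) = \re(z) - \tfrac12 \ge \tfrac12$ when $\re(z)\ge 1$), and the second is the case $w = \tfrac12 + is$.

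Next I would check the hypotheses of \eqref{ps_acta_inequality} with $q = \tfrac p2 > 1$ (using $p>2$). First, $[X, C] = -[B, A^{\frac12}]A^{\frac12} \in \Lc_{\frac p2,\infty}$, since $[B, A^{\frac12}] \in \Lc_{\frac p2,\infty}$ by Condition \ref{compact conditions for analyticity}(ii) and $A^{\frac12}$ is bounded. Second, $f_w$ is Lipschitz on $[0,1]$ with $\|f_w'\|_\infty \le 2|w|$: on $(0,1]$ one has $f_w'(t) = 2w\,t^{2w-1}$ with $|t^{2w-1}| = t^{2\re(w)-1}\le 1$ because $2\re(w)-1\ge 0$, and $f_w$ is continuous at $0$, so integrating $f_w'$ yields the Lipschitz bound on $[0,1]$ (one may then extend $f_w$ to a Lipschitz function on $\Rl$ with the same constant without altering $f_w(X)$). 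Third, although \eqref{ps_acta_inequality} is stated for self-adjoint $Y$ and real-valued $f$, it applies here at the cost of a fixed constant: decompose $C = C_1 + iC_2$ into self-adjoint and anti-self-adjoint parts, so that $[X,C^*] = -[X,C]^*$ gives $\|[X,C_j]\|_{\frac p2,\infty} \lesssim \|[X,C]\|_{\frac p2,\infty}$, and write $f_w = \re f_w + i\,\mathrm{Im}\,f_w$ with each part Lipschitz of derivative at most $\|f_w'\|_\infty$.

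Combining these ingredients, \eqref{ps_acta_inequality} yields
\[
    \|[BA^{\frac12}, A^{w}]\|_{\frac p2,\infty} = \|[f_w(X), C]\|_{\frac p2,\infty} \le c_p'\,\|f_w'\|_\infty\,\|[X, C]\|_{\frac p2,\infty} \le 2c_p'\,|w|\,\|[BA^{\frac12}, A^{\frac12}]\|_{\frac p2,\infty},
\]
where $c_p'$ absorbs $C_{p/2}$ from \eqref{ps_acta_inequality} together with the constants from the two splittings. Since $|w| = |z-\tfrac12+is| \le 2(1+|z|+|s|)$ in the first case, and $|w| = |\tfrac12+is| \le 1+|s|$ in the second (equivalently, the second estimate is the case $z=1$ of the first), both inequalities follow with a suitable $c_p$. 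The delicate point — and the place where the hypothesis $\re(z)\ge 1$ is genuinely used — is the Lipschitz bound for $f_w$ up to $t=0$ with constant of order $|w|$; for $\re(w)<\tfrac12$ the function $t\mapsto t^{2w}$ ceases to be Lipschitz near the origin. The reduction of \eqref{ps_acta_inequality} to a bounded non-self-adjoint second argument is routine but worth spelling out, since the bound is asserted in terms of $\|[BA^{\frac12},A^{\frac12}]\|_{\frac p2,\infty}$ rather than the a priori larger quantity $\|[B,A^{\frac12}]\|_{\frac p2,\infty}$.
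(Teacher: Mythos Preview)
Your proof is correct and follows essentially the same route as the paper: write $A^{w}=\phi(A^{1/2})$ for a Lipschitz function on $[0,1]$ with derivative bounded by a constant times $|w|$, then invoke \eqref{ps_acta_inequality} with $q=p/2$. The paper applies \eqref{ps_acta_inequality} directly with $Y=BA^{1/2}$ and a complex-valued $\phi$ without comment; your explicit reduction to self-adjoint $Y$ and real-valued Lipschitz $f$ via the splittings $C=C_1+iC_2$ and $f_w=\re f_w+i\,\mathrm{Im}\,f_w$ makes this step rigorous, and your observation that the second inequality is simply the case $z=1$ of the first is a clean way to handle both at once.
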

\begin{proof} 
We only prove the first inequality as the proof of the second one is identical.
Fix $z$ with $\re(z)\geq 1,$ and define the function
$$\phi(t):=
\begin{cases}
t^{2z-1+2is},& t\in(0,1)\\
0,& t\leq 0\\
1,& t\geq 1
\end{cases}
$$
By the normalisation assumption, we have $\|A\|\leq 1.$ Therefore,
$$A^{z-\frac12+is}=\phi(A^{\frac12}).$$
Applying \eqref{ps_acta_inequality} to the Lipschitz function $\phi$, we have
\begin{align*}
    \|[BA^{\frac{1}{2}},A^{z-\frac{1}{2}+is}]\|_{\frac{p}{2},\infty} &= \|[BA^{\frac{1}{2}},\phi(A^{\frac12})]\|_{\frac{p}{2},\infty}\\
                                                                     &\leq c_p\|\phi'\|_{\infty}\|[BA^{\frac{1}{2}},A^{\frac12}]\|_{\frac{p}{2},\infty}.
\end{align*}
Due to the obvious inequality,
$$\|\phi'\|_{\infty}=|z-\frac12+is|\leq 1+|z|+|s|$$
this completes the proof.
\end{proof}

\begin{lemma}\label{tz boundedness lemma} 
Let $p>2$ and let $A$ and $B$ satisfy Condition \ref{compact conditions for analyticity} and normalised as in Remark \ref{normalisation remark}, and let $T_z$ be the mapping from Theorem \ref{csz key lemma}. There exists a constant $c_{p,A,B}$ such that for all $s \in \Rl,$
$$\|T_z(s)\|_1\leq  c_{p,A,B}(1+|z|)(1+|s|)\|[BA^{\frac{1}{2}},A^{\frac12}]\|_{\frac{p}{2},\infty},\quad \re(z)\geq p.$$
\end{lemma}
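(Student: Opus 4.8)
The plan is to bound $\|T_z(s)\|_1$ by estimating each of the (at most two) summands in the definition of $T_z(s)$ from Theorem \ref{csz key lemma} separately, using the Hölder-type inequality \eqref{weird_holder} together with Lemma \ref{first psacta lemma}. Recall that for $s\neq 0$,
\[
    T_z(s) = B^{z-1+is}[BA^{\frac{1}{2}},A^{z-\frac{1}{2}+is}]Y^{-is} + B^{is}[BA^{\frac{1}{2}},A^{\frac{1}{2}+is}]Y^{z-1-is},
\]
and $T_z(0)$ is the evident limiting expression; the estimate at $s=0$ follows from the $s\neq 0$ case by continuity (or by treating it directly, which is easier). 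In each summand the middle factor is a commutator lying in $\Lc_{\frac{p}{2},\infty}$ by Lemma \ref{first psacta lemma}, and the outer factors are (powers of) $B$ or $Y$, which lie in $\Lc_{p,\infty}$ with quasinorm at most $1$ after the normalisation of Remark \ref{normalisation remark}. Since $\frac{2}{p}+\frac{2}{p}+$ — wait, more carefully: one needs $\frac{1}{p}+\frac{2}{p}+\frac{1}{p}$-type bookkeeping, but the clean way is to absorb the commutator (an $\Lc_{\frac{p}{2},\infty}$ element, with $q:=\frac{p}{2}>1$) against a single $\Lc_{\frac{p}{2},1}$-type factor; however $B^{z-1+is}$ with $\re(z)\geq p$ has $\re(z-1)\geq p-1$, so $B^{z-1}\in\Lc_{\frac{p}{p-1},\infty}\subset$ a small enough ideal, and indeed $\frac{1}{p/2}+\frac{p-1}{p}=\frac{2}{p}+1-\frac{1}{p}=1+\frac{1}{p}>1$, so the product of $B^{z-1+is}$ with the commutator already lies in $\Lc_1$ with room to spare; then $Y^{-is}$ is bounded (indeed a contraction, being a power of a self-adjoint contraction times a phase, $\|Y^{-is}\|\leq 1$ since $\|Y\|\leq\|Y\|_{p,\infty}\leq 1$). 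So the first summand has $\Lc_1$-norm $\lesssim \|B^{z-1+is}\|_{\frac{p}{p-1},\infty}\cdot\|[BA^{\frac12},A^{z-\frac12+is}]\|_{\frac{p}{2},\infty}$.

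The second summand is handled symmetrically: $B^{is}$ is a contraction, the commutator $[BA^{\frac12},A^{\frac12+is}]$ is controlled by the second inequality of Lemma \ref{first psacta lemma} (giving a factor $c_p(1+|s|)$), and $Y^{z-1-is}$ with $\re(z-1)\geq p-1$ lies in $\Lc_{\frac{p}{p-1},\infty}$ with quasinorm at most $\|Y\|_{p,\infty}^{\re(z)-1}\leq 1$. Applying \eqref{weird_holder} (in the form $\|RS\|_1\leq \|R\|_{\frac p2,\infty}\|S\|_{\frac{p}{p-2},1}$, or combining an $\Lc_{\frac p2,\infty}$ factor with an $\Lc_{\frac{p}{p-2},\infty}$ factor into $\Lc_1$ via the embedding $\Lc_{r,\infty}\subset\Lc_{r',1}$ for $r<r'$, valid since $\frac{p}{p-1}<\frac{p}{p-2}$), both summands are seen to lie in $\Lc_1$. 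One then needs to control the norms of the complex powers: $\|B^{w}\|_{p/\re(w),\infty}=\|\,|B|^{\re(w)}\,\|_{p/\re(w),\infty}=\|B\|_{p,\infty}^{\re(w)}\leq 1$ since $\re(w)\geq 1$ and $\|B\|_{p,\infty}=1$, and likewise for $Y$; the imaginary part contributes only a unitary/bounded factor of norm $1$.

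Assembling: from the first summand we get $\lesssim c_p(1+|z|+|s|)\|[BA^{\frac12},A^{\frac12}]\|_{\frac p2,\infty}$ and from the second $\lesssim c_p(1+|s|)\|[BA^{\frac12},A^{\frac12}]\|_{\frac p2,\infty}$, and since $1+|z|+|s|\leq (1+|z|)(1+|s|)$ the claimed bound
\[
    \|T_z(s)\|_1 \leq c_{p,A,B}(1+|z|)(1+|s|)\|[BA^{\frac12},A^{\frac12}]\|_{\frac p2,\infty}
\]
follows, with $c_{p,A,B}$ absorbing $c_p$ and the (finite) quasinorms $\|B\|_{p,\infty}$, $\|A\|$ that were normalised to $1$ here but enter in general. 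The constant $c_{p,A,B}$ is finite precisely because $p>2$ forces $\frac{p}{p-1}<\frac{p}{p-2}<\infty$ and the relevant Hölder exponents to be summable; for the $s=0$ case one uses instead the first inequality of Lemma \ref{first psacta lemma} with $s=0$ together with the fact that $Y^{z-1}$ and $Y^{0}=\mathbf 1$ (resp. $B^{z-1}$) obey the same quasinorm bounds.

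The main obstacle — though it is more bookkeeping than genuine difficulty — is tracking the Hölder exponents carefully so that the product of the three factors in each summand genuinely lands in $\Lc_1$ with a bound linear in $|z|$ and $|s|$: the commutator sits in $\Lc_{\frac p2,\infty}$, and to pair it into $\Lc_1$ one must check that the outer power of $B$ (or $Y$), which is only in $\Lc_{\frac{p}{\re(z)-1},\infty}\subseteq\Lc_{\frac{p}{p-1},\infty}$, is in a "small enough" Lorentz ideal, i.e. that $\frac{2}{p}+\frac{p-1}{p}>1$, which is exactly where $p>2$ is used (for $p=2$ one lands in $\Lc_{1,\infty}$, not $\Lc_1$, and the argument would need the refined $\Lc_{p,1}$-version of Hölder). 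The polynomial-in-$|z|$ dependence — which is what ultimately feeds into the quantitative Lipschitz estimate of Theorem \ref{main_analytic_theorem} — comes solely from the factor $\|\phi'\|_\infty=|z-\tfrac12+is|$ in Lemma \ref{first psacta lemma}, since all the complex-power factors have operator quasinorm bounded by $1$ uniformly in the imaginary direction.
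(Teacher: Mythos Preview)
Your proposal is correct and follows essentially the same route as the paper: split $T_z(s)$ by the triangle inequality, discard the purely imaginary powers $B^{is}$, $Y^{-is}$ as contractions, pair the commutator in $\Lc_{\frac{p}{2},\infty}$ against $B^{z-1}$ (resp.\ $Y^{z-1}$) in $\Lc_{\frac{p}{p-2},1}$ via \eqref{weird_holder}, invoke Lemma \ref{first psacta lemma}, and finish with $(1+|z|+|s|)\leq(1+|z|)(1+|s|)$. The only cosmetic difference is that the paper bounds $\|B^{z-1}\|_{\frac{p}{p-2},1}$ directly from the singular-value estimate $\mu(k,B)\leq k^{-1/p}$, whereas you reach the same conclusion via the embedding $\Lc_{\frac{p}{p-1},\infty}\subset\Lc_{\frac{p}{p-2},1}$; your exposition would benefit from cutting the exploratory false starts and stating that H\"older pairing up front.
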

\begin{proof} 
By the triangle inequality for $\Lc_1$, we have
\begin{align*}
    \|T_z(s)\|_1&\leq\|B^{z-1+is}[BA^{\frac{1}{2}},A^{z-\frac{1}{2}+is}]Y^{-is}\|_1+\|B^{is}[BA^{\frac{1}{2}},A^{\frac{1}{2}+is}]Y^{z-1-is}\|_1\\
                &\leq\|B^{z-1}[BA^{\frac{1}{2}},A^{z-\frac{1}{2}+is}]\|_1+\|[BA^{\frac{1}{2}},A^{\frac{1}{2}+is}]Y^{z-1}\|_1,\quad s\neq 0.
\end{align*}
Applying the H\"older-type inequality \eqref{weird_holder}, we infer
\begin{align*}
    \|T_z(s)\|_1 &\leq\|B^{z-1}\|_{\frac{p}{p-2},1}\|[BA^{\frac{1}{2}},A^{z-\frac{1}{2}+is}]\|_{\frac{p}{2},\infty}\\
                 &\quad +\|[BA^{\frac{1}{2}},A^{\frac{1}{2}+is}]\|_{\frac{p}{2},\infty}\|Y^{z-1}\|_{\frac{p}{p-2},1},\quad s\neq 0.
\end{align*}
By the normalisation, we have $\|B\|_{p,\infty},\|Y\|_{p,\infty}\leq 1.$ This allows us to write
\begin{align*}
    \|B^{z-1}\|_{\frac{p}{p-2},1} &\leq \Big\|\Big\{k^{\frac{1-z}{p}}\Big\}_{k=1}^{\infty}\Big\|_{\frac{p}{p-2},1}\leq\Big\|\Big\{k^{\frac{1-p}{p}}\Big\}_{k=1}^{\infty}\Big\|_{\frac{p}{p-2},1}=:c_p',\quad\re(z)\geq p,\\
\|Y^{z-1}\|_{\frac{p}{p-2},1} &\leq \Big\|\Big\{k^{\frac{1-z}{p}}\Big\}_{k=1}^{\infty}\Big\|_{\frac{p}{p-2},1}\leq\Big\|\Big\{k^{\frac{1-p}{p}}\Big\}_{k=1}^{\infty}\Big\|_{\frac{p}{p-2},1} = c_p',\quad\re(z)\geq p,
\end{align*}
Thus,  for $s\neq 0$
$$\|T_z(s)\|_1\leq  c_p'(\|[BA^{\frac{1}{2}},A^{z-\frac{1}{2}+is}]\|_{\frac{p}{2},\infty}+\|[BA^{\frac{1}{2}},A^{\frac{1}{2}+is}]\|_{\frac{p}{2},\infty}).$$
Using Lemma \ref{first psacta lemma}, we write
$$\|[BA^{\frac{1}{2}},A^{z-\frac{1}{2}+is}]\|_{\frac{p}{2},\infty}+\|[BA^{\frac{1}{2}},A^{\frac{1}{2}+is}]\|_{\frac{p}{2},\infty}\leq c_p''(1+|z|+|s|)\|[BA^{\frac{1}{2}},A^{\frac12}]\|_{\frac{p}{2},\infty}.$$
Thus,
$$\|T_z(s)\|_1\leq c_p'c_p''(1+|z|+|s|)\|[BA^{\frac{1}{2}},A^{\frac12}]\|_{\frac{p}{2},\infty}+c_p'(1+|s|)\|[BA^{\frac{1}{2}},A^{\frac12}]\|_{\frac{p}{2},\infty}.$$
Since $(1+|z|+|s|)\leq (1+|z|)(1+|s|)$, the required assertion for $s\neq 0$ follows. The $s=0$ case proved similarly.
\end{proof}

\begin{lemma}\label{continuity phi compute lemma} Let $\re(z_1),\re(z_2)>1,$ and set
$$\phi_{z_1,z_2}(t)=t^{z_1}-t^{z_2},\quad t\in (0,1)$$
and $\phi_{z_1,z_2}(0) = \phi_{z_1,z_2}(1) = 0.$
We have
$$\|\phi_{z_1,z_2}\|_{\infty}\leq\frac{|z_1-z_2|}{\min\{\re(z_1)-1,\re(z_2)-1\}},$$
$$\|\phi_{z_1,z_2}'\|_{\infty}\leq |z_1-z_2|\cdot \max\{|z_1|,|z_2|\}\cdot\Big(1+\frac1{\min\{\re(z_1)-1,\re(z_2)-1\}}\Big).$$
\end{lemma}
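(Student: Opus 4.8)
The plan is to handle both estimates with a single device: write $\phi_{z_1,z_2}$ and its derivative as integrals over the segment joining $z_2$ to $z_1$ in $\Cplx$, and then reduce everything to the elementary scalar inequality
\[
    -\log t \;\leq\; \frac{t^{-c}}{c},\qquad t\in(0,1),\ c>0,
\]
which is just $x\leq e^x$ applied to $x=-c\log t$. Throughout, set $\sigma:=\min\{\re z_1,\re z_2\}$, so that $\sigma-1=\min\{\re z_1-1,\re z_2-1\}>0$.

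First I would prove the sup-norm bound. Parametrising the segment as $w(u)=z_2+u(z_1-z_2)$, $u\in[0,1]$, the fundamental theorem of calculus gives, for $t\in(0,1)$,
\[
    t^{z_1}-t^{z_2}=(z_1-z_2)\log t\int_0^1 t^{\,w(u)}\,du.
\]
Since $\re w(u)$ is a convex combination of $\re z_1$ and $\re z_2$ it is at least $\sigma$, so $|t^{w(u)}|=t^{\re w(u)}\leq t^{\sigma}$ for $t\in(0,1)$; hence $|\phi_{z_1,z_2}(t)|\leq |z_1-z_2|\,t^{\sigma}(-\log t)$. Applying the scalar inequality with $c=\sigma-1$ gives $t^{\sigma}(-\log t)\leq t/(\sigma-1)\leq 1/(\sigma-1)$, which is the first claim once one notes that $\phi_{z_1,z_2}$ extends continuously to $t=0$ and $t=1$ (because $\re z_i>1>0$), so the supremum over $(0,1)$ is the supremum over $[0,1]$.

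For the derivative, on $(0,1)$ we have $\phi_{z_1,z_2}'(t)=z_1 t^{z_1-1}-z_2 t^{z_2-1}$, which I would split as
\[
    \phi_{z_1,z_2}'(t)=(z_1-z_2)\,t^{z_1-1}+z_2\bigl(t^{z_1-1}-t^{z_2-1}\bigr).
\]
The first summand has modulus $|z_1-z_2|\,t^{\re z_1-1}\leq|z_1-z_2|$, since $\re z_1-1>0$. For the second summand the same integral representation, now with exponents $z_i-1$ whose real parts are $\geq\sigma-1>0$, gives $|t^{z_1-1}-t^{z_2-1}|\leq|z_1-z_2|\,t^{\sigma-1}(-\log t)\leq|z_1-z_2|/(\sigma-1)$, again by the scalar inequality with $c=\sigma-1$. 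Adding the two estimates and using $|z_i|>1$ — so that $1\leq\max\{|z_1|,|z_2|\}$ and $|z_2|\leq\max\{|z_1|,|z_2|\}$ — produces
\[
    \|\phi_{z_1,z_2}'\|_\infty\leq|z_1-z_2|\max\{|z_1|,|z_2|\}\Bigl(1+\frac{1}{\sigma-1}\Bigr),
\]
which is the second claim.

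I do not expect a genuine obstacle here; the only points needing care are bookkeeping the real parts of the exponents inside the integral (to keep the relevant powers of $t$ bounded on $(0,1)$) and the choice of $c$ in the scalar inequality — taking $c=\sigma-1$ in both estimates makes the surviving power of $t$ have nonnegative exponent and yields exactly the constants asserted in the statement.
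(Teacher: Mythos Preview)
Your argument is correct and follows essentially the same route as the paper: both proofs write $t^{z_1}-t^{z_2}$ via the fundamental theorem of calculus along the segment from $z_2$ to $z_1$ and then reduce to an elementary bound on $t^{\alpha}|\log t|$ for $t\in(0,1)$. The only cosmetic difference is in the derivative estimate: the paper writes $z_1t^{z_1-1}-z_2t^{z_2-1}$ as a single integral $\int_0^1 t^{w(\lambda)-1}(1+w(\lambda)\log t)\,d\lambda$ and bounds the integrand directly, whereas you split $\phi_{z_1,z_2}'(t)=(z_1-z_2)t^{z_1-1}+z_2(t^{z_1-1}-t^{z_2-1})$ and reuse the first-part estimate on the second summand --- both lead to the same constants.
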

\begin{proof} 
We first prove the upper bound for $\|\phi_{z_1,z_2}\|_{\infty}.$
By definition,
\[
    \|\phi_{z_1,z_2}\|_{\infty}=\sup_{0\leq t\leq 1}|t^{z_1}-t^{z_2}|.
\]
Now,
$$\frac{t^{z_1}-t^{z_2}}{z_1-z_2}=\int_0^1t^{\lambda z_1+(1-\lambda)z_2-1}\cdot \log(t)d\lambda.$$
Assume without loss of generality that $\re(z_1)< \re(z_2).$ Then,
\begin{align*}
    \frac{\|\phi_{z_1,z_2}\|_{\infty}}{|z_1-z_2|}&\leq \sup_{\lambda\in[0,1]}\sup_{0\leq t\leq 1}|t^{\lambda z_1+(1-\lambda)z_2-1}\cdot \log(t)|\\
                                                 &=\sup_{\mu\in[\re(z_1),\re(z_2)]}\sup_{0\leq t\leq 1}t^{\mu-1}\log(\frac1t)\\
                                                 &=\sup_{\mu\in[\re(z_1),\re(z_2)]}\frac1{\mu-1}\sup_{0\leq t\leq 1}t^{\mu-1}\log(\frac1{t^{\mu-1}})\\
                                                 &=\sup_{\mu\in[\re(z_1),\re(z_2)]}\frac1{\mu-1}\cdot \sup_{0\leq t\leq 1}t\log(\frac1t)\leq \sup_{\mu\in[\re(z_1),\re(z_2)]}\frac1{\mu-1}.
\end{align*}
This proves the first inequality.

Next, we prove the upper bound for $\|\phi'_{z_1,z_2}\|_{\infty}.$ By definition,
$$\|\phi'_{z_1,z_2}\|_{\infty}=\sup_{0\leq t\leq 1}|z_1t^{z_1-1}-z_2t^{z_2-1}|.$$
Now,
\begin{equation*}
    \frac{z_1t^{z_1-1}-z_2t^{z_2-1}}{z_1-z_2} =\int_0^1t^{\lambda z_1+(1-\lambda)z_2-1}\cdot (1+(\lambda z_1+(1-\lambda)z_2)\log(t))d\lambda.
\end{equation*}
Thus,
\begin{align*}
    \frac{\|\phi'_{z_1,z_2}\|_{\infty}}{|z_1-z_2|} &\leq \sup_{\lambda\in[0,1]}\sup_{0\leq t\leq 1}|t^{\lambda z_1+(1-\lambda)z_2-1}\cdot (1+(\lambda z_1+(1-\lambda)z_2)\log(t))|\\
                                                   &\leq \max\{|z_1|,|z_2|\}\cdot \sup_{\mu\in[\re(z_1),\re(z_2)]}\sup_{0\leq t\leq 1}t^{\mu-1}\cdot (1+\log(\frac1t))\\
                                                   &\leq \max\{|z_1|,|z_2|\}\cdot\Big(1+\sup_{\mu\in[\re(z_1),\re(z_2)]}\sup_{0\leq t\leq 1}t^{\mu-1}\log(\frac1t)\Big)\\
                                                   &=\max\{|z_1|,|z_2|\}\cdot\Big(1+\sup_{\mu\in[\re(z_1),\re(z_2)]}\frac1{\mu-1}\sup_{0\leq t\leq 1}t^{\mu-1}\log(\frac1{t^{\mu-1}})\Big)\\
                                                   &=\max\{|z_1|,|z_2|\}\cdot\Big(1+\sup_{\mu\in[\re(z_1),\re(z_2)]}\frac1{\mu-1}\cdot \sup_{0\leq t\leq 1}t\log(\frac1t)\Big)\\
                                                   &\leq \max\{|z_1|,|z_2|\}\cdot\Big(1+\sup_{\mu\in[\re(z_1),\re(z_2)]}\frac1{\mu-1}\Big).
\end{align*}
This completes the proof of the second inequality.
\end{proof}

\begin{lemma}\label{second psacta lemma} 
Let $p>2$ and let $A$ and $B$ be positive operators satisfying Condition \ref{compact conditions for analyticity} and normalised as in Remark \ref{normalisation remark}. There exists $c_p>0$ such that
$$\Big\|[BA^{\frac12},A^{z_1-1}-A^{z_2-1}]\Big\|_{\frac{p}{2},\infty}\leq c_p|z_1-z_2|\cdot\max\{1,|z_1|,|z_2|\}\cdot \|[BA^{\frac{1}{2}},A^{\frac12}]\|_{\frac{p}{2},\infty},$$
$$\Big\|Y^{z_1-1}-Y^{z_2-1}\Big\|_{\frac{p}{p-2},1} \leq c_p|z_1-z_2|,\quad \Big\|B^{z_1-1}-B^{z_2-1}\Big\|_{\frac{p}{p-2},1} \leq c_p|z_1-z_2|,$$
whenever $\re(z_1),\re(z_2)\geq p.$
\end{lemma}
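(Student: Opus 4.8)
The plan is to reduce all three inequalities to the two Lipschitz-type estimates already available: the commutator bound \eqref{ps_acta_inequality} of Potapov--Sukochev for the first inequality, and an elementary $\Lc_{p/p-2,1}$-norm estimate on differences of complex powers of a fixed positive operator in $\Lc_{p,\infty}$ for the other two. The function-theoretic input for all of them is Lemma \ref{continuity phi compute lemma}, which controls $\|\phi_{z_1,z_2}\|_\infty$ and $\|\phi_{z_1,z_2}'\|_\infty$, where $\phi_{z_1,z_2}(t) = t^{z_1}-t^{z_2}$ on $(0,1)$.

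For the first inequality I would proceed exactly as in Lemma \ref{first psacta lemma}: using the normalisation $\|A\| \le 1$, I write $A^{z_1-1}-A^{z_2-1} = \psi(A^{1/2})$ where $\psi(t) = t^{2z_1-2}-t^{2z_2-2}$ for $t \in (0,1)$ and $\psi$ is extended by $0$ at $t=0$ and by $0$ at $t=1$ (noting $\re(2z_j-2) \ge 2p-2 > 0$, so $\psi$ is continuous and, since $\re(z_j)\ge p$ keeps the exponent bounded away from $0$, Lipschitz). Then \eqref{ps_acta_inequality} with $f = \psi$, $X = A^{1/2}$, $Y = BA^{1/2}$ gives
\[
    \|[BA^{\frac12},A^{z_1-1}-A^{z_2-1}]\|_{\frac p2,\infty} \le C_{p/2}\|\psi'\|_\infty \|[BA^{\frac12},A^{\frac12}]\|_{\frac p2,\infty},
\]
and $\|\psi'\|_\infty$ is bounded by Lemma \ref{continuity phi compute lemma} (applied with $z_j \mapsto 2z_j-1$, or directly) by $c\,|z_1-z_2|\max\{1,|z_1|,|z_2|\}$, since $\min\{\re(2z_j-2)\} \ge 2p-2$ is bounded below by a positive constant depending only on $p$. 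This yields the claimed bound.

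For the two $\Lc_{p/p-2,1}$ estimates I would treat $Y^{z_1-1}-Y^{z_2-1}$ and $B^{z_1-1}-B^{z_2-1}$ identically, using $\|Y\|_{p,\infty}, \|B\|_{p,\infty} \le 1$. Write $Y^{z_1-1}-Y^{z_2-1} = \phi(Y)$ with $\phi(t) = t^{z_1-1}-t^{z_2-1}$ on $(0,1]$ and $\phi(0)=0$; since the spectrum of $Y$ lies in $[0,1]$ and $\mu(k,Y) \le (k+1)^{-1/p}$, we get $\mu(k, Y^{z_1-1}-Y^{z_2-1}) \le |\phi(\mu(k,Y))| \le \|\phi\|_\infty^{(\mathrm{loc})}$ but more usefully, monotonicity of $|\phi(t)|$ near $0$ together with $|\phi(t)| \le \|\phi_{z_1,z_2}\|_\infty$ on all of $[0,1]$ and the pointwise bound $|t^{z_1-1}-t^{z_2-1}| \le C|z_1-z_2|\,t^{\re(z_1)-1}\log(1/t) + \cdots$. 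The cleanest route: on $[0,1]$ one has $|\phi(t)| \le |z_1-z_2|\cdot t^{\min\{\re(z_1),\re(z_2)\}-1}\cdot(\text{bounded})$ by the integral representation in Lemma \ref{continuity phi compute lemma} (the same computation with exponent shifted by $-1$ rather than $0$), hence $\mu(k,\phi(Y)) \le c_p|z_1-z_2|(k+1)^{-(\re(z_1)\wedge\re(z_2)-1)/p} \le c_p|z_1-z_2|(k+1)^{-(p-1)/p}$. Then
\[
    \|\phi(Y)\|_{\frac p{p-2},1} = \sum_{k\ge 0}(k+1)^{\frac{p-2}{p}-1}\mu(k,\phi(Y)) \le c_p|z_1-z_2|\sum_{k\ge 0}(k+1)^{-\frac2p-\frac{p-1}{p}},
\]
and since $\frac2p + \frac{p-1}{p} = \frac{p+1}{p} > 1$ the series converges, giving the bound with a constant depending only on $p$. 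The same argument with $B$ in place of $Y$ finishes the proof.

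The main obstacle — and it is a minor one — is verifying the spectral-calculus reductions carefully: one must check that the relevant scalar functions extend to genuinely Lipschitz (for the first inequality, to apply \eqref{ps_acta_inequality}) or at least bounded-with-the-right-decay (for the latter two) functions on the compact spectra of $A^{1/2}$, $Y$, $B$, with all constants uniform over $\re(z_1),\re(z_2) \ge p$; the key point making this uniform is precisely that $\re(z_j)-1 \ge p-1$ stays bounded away from $0$, so the denominators $1/(\mu-1)$ appearing in Lemma \ref{continuity phi compute lemma} are harmless. The rest is a routine summation of a convergent $p$-series.
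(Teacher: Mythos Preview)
Your argument for the first inequality is the same as the paper's: write $A^{z_1-1}-A^{z_2-1}=\phi_{2z_1-2,\,2z_2-2}(A^{1/2})$, apply \eqref{ps_acta_inequality}, and bound $\|\phi'_{2z_1-2,\,2z_2-2}\|_\infty$ via Lemma~\ref{continuity phi compute lemma}.

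For the second and third inequalities you take a different route from the paper, and there is a small gap. The paper does not estimate singular values directly; it factors
\[
Y^{z_1-1}-Y^{z_2-1}=Y^{\beta}\cdot\phi_{2z_1-2-2\beta,\,2z_2-2-2\beta}(Y^{1/2})
\]
for a fixed $\beta\in(p-2,\,p-\tfrac32)$, bounds the second factor in operator norm by the sup-norm estimate of Lemma~\ref{continuity phi compute lemma} (which needs $\re(2z_j-2-2\beta)\ge 2p-2-2\beta>1$, i.e.\ $\beta<p-\tfrac32$), and bounds $\|Y^\beta\|_{\frac{p}{p-2},1}$ by a constant depending only on $p$ (which needs $\beta>p-2$). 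H\"older then finishes. (The displayed exponents in the paper's text do not balance as written, but this is the intended scheme.)

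Your direct approach of bounding $\mu(k,\phi(Y))$ via a pointwise estimate on $\phi(t)=t^{z_1-1}-t^{z_2-1}$ also works, but the bound you claim is not quite right: the integral representation behind Lemma~\ref{continuity phi compute lemma} gives
\[
|\phi(t)|\le|z_1-z_2|\,t^{\min\{\re z_1,\re z_2\}-1}\,|\log t|,\qquad t\in(0,1],
\]
and the factor you call ``(bounded)'' is $|\log t|$, which blows up at $t=0$. The fix is an $\varepsilon$ of room: for any $0<\varepsilon<1$ one has $t^{p-1}|\log t|\le C_{p,\varepsilon}\,t^{p-1-\varepsilon}$ on $(0,1]$, hence $|\phi(Y)|\le C_{p,\varepsilon}|z_1-z_2|\,Y^{p-1-\varepsilon}$ as an inequality of commuting positive operators, so $\mu(k,\phi(Y))\le C_{p,\varepsilon}|z_1-z_2|(k+1)^{-(p-1-\varepsilon)/p}$ and the series $\sum_{k\ge 0}(k+1)^{-(p+1-\varepsilon)/p}$ still converges. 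With this repair your argument is complete. The paper's factorisation is really the same $\varepsilon$-of-room idea in disguise, packaged so that the sup-norm bound of Lemma~\ref{continuity phi compute lemma} can be quoted verbatim rather than re-derived pointwise.
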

\begin{proof} 
We apply \eqref{ps_acta_inequality}, similarly to the proof of Lemma \ref{first psacta lemma}.  By the normalisation, we have $\|A\|\leq 1.$ Therefore,
$$A^{z_1-1}-A^{z_2-1}=\phi_{2z_1-2,2z_2-2}(A^{\frac12}).$$
By \eqref{ps_acta_inequality} with $f = \phi_{2z_1-2,2z_2-2}$, we have
\begin{align*}
    \Big\|[BA^{\frac12},A^{z_1-1}-A^{z_2-1}]\Big\|_{\frac{p}{2},\infty} &= \Big\|[BA^{\frac12},\phi_{2z_1-2,2z_2-2}(A^{\frac12})]\Big\|_{\frac{p}{2},\infty}\\
                                                                        &\leq c_p\|\phi_{2z_1-2,2z_2-2}'\|_{\infty}\Big\|[BA^{\frac12},A^{\frac12}]\Big\|_{\frac{p}{2},\infty}.
\end{align*}
Note that $\re(2z_1-2),\re(2z_2-2)\geq 2p-2\geq 2.$ By Lemma \ref{continuity phi compute lemma}, we have
\[
    \|\phi_{2z_1-2,2z_2-2}'\|_{\infty}\leq 2|(2z_1-2)-(2z_2-2)|\cdot\max\{|2z_1-2|,|2z_2-2|\}.
\]
This completes the proof of the first assertion

To see the second inequality, we write
$$Y^{z_1-1}-Y^{z_2-1}=Y^{p+\frac14}\cdot \phi_{2z_1-\frac52,2z_2-\frac52}(Y^{\frac12}).$$
By H\"older inequality, we have
$$\|Y^{z_1-1}-Y^{z_2-1}\|_{\frac{p}{p-2},1} \leq\|Y^{p+\frac14}\|_{\frac{p}{p-2},1} \cdot\|\phi_{2z_1-\frac52,2z_2-\frac52}(Y^{\frac12})\|.$$
Since $Y\in\mathcal{L}_{p,\infty},$ it follows that
$$c_p\stackrel{def}{=}\|Y^{p+\frac12}\|_{\frac{p}{p-2},1}<\infty.$$
By the normalisation, we have $\|Y\|\leq 1.$ Thus, 
$$\|\phi_{2z_1-\frac52,2z_2-\frac52}(Y^{\frac12})\|\leq \|\phi_{2z_1-\frac52,2z_2-\frac52}\|_{\infty}.$$
Note that $\re(2z_1-\frac52),\re(2z_2-\frac52)\geq 2p-\frac52\geq\frac32.$ By Lemma \ref{continuity phi compute lemma}, we have
$$\|\phi_{2z_1-\frac52,2z_2-\frac52}\|_{\infty}\leq 3|(2z_1-\frac52)-(2z_2-\frac52)|=6|z_1-z_2|.$$
This completes the proof of the second inequality. 

The third inequality is proved in an identical manner.
\end{proof}

\begin{lemma}\label{tz continuity lemma} Let $p>2$ and let $A$ and $B$ be positive operators satisfying Condition \ref{compact conditions for analyticity} and normalised as in Remark \ref{normalisation remark}. There exists a constant $c_p$ such that for all $s \in \Rl$ and
$z_1,z_2$ satisfying $\re(z_1),\re(z_2)\geq p,$
we have
\begin{align*}
  \|T_{z_1}(s)-&T_{z_2}(s)\|_1\leq c_p(1+|z_1|+|z_2|+|s|)|z_1-z_2|\Big\|[BA^{\frac{1}{2}},A^{\frac{1}{2}}]\Big\|_{\frac{p}{2},\infty}.
\end{align*}
\end{lemma}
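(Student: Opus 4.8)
The plan is to expand $T_{z_1}(s) - T_{z_2}(s)$ using the explicit formula for $T_z(s)$ from Theorem \ref{csz key lemma} and then telescope each difference so that the estimates of Lemmas \ref{first psacta lemma}, \ref{tz boundedness lemma} and especially \ref{second psacta lemma} apply term by term. Recall that for $s \neq 0$,
\[
    T_z(s) = B^{z-1+is}[BA^{\frac{1}{2}},A^{z-\frac{1}{2}+is}]Y^{-is} + B^{is}[BA^{\frac{1}{2}},A^{\frac{1}{2}+is}]Y^{z-1-is}.
\]
In the first summand only the factors $B^{z-1+is}$ and $[BA^{\frac{1}{2}},A^{z-\frac{1}{2}+is}]$ depend on $z$ (the factor $Y^{-is}$ has norm $\leq 1$ uniformly since $\|Y\| \leq 1$ and $Y \geq 0$, so $\|Y^{-is}\| = 1$); in the second summand only $[BA^{\frac{1}{2}},A^{\frac{1}{2}+is}]$ (which is the same for $z_1$ and $z_2$ — it does not depend on $z$) and $Y^{z-1-is}$ depend on $z$. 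So actually the second summand's difference reduces to $B^{is}[BA^{\frac{1}{2}},A^{\frac{1}{2}+is}](Y^{z_1-1-is} - Y^{z_2-1-is})$, and the first summand's difference splits via
\[
    B^{z_1-1+is}C_1 Y^{-is} - B^{z_2-1+is}C_2 Y^{-is} = (B^{z_1-1+is}-B^{z_2-1+is})C_1 Y^{-is} + B^{z_2-1+is}(C_1 - C_2)Y^{-is},
\]
where $C_j = [BA^{\frac{1}{2}},A^{z_j-\frac{1}{2}+is}]$.

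Next I would estimate each of the resulting pieces in $\Lc_1$ using the Hölder-type inequality \eqref{weird_holder}, exactly as in the proof of Lemma \ref{tz boundedness lemma}. For the first piece I write $B^{z_1-1+is} - B^{z_2-1+is}$; by the normalisation $\|B\|_{p,\infty}\leq 1$ and since $\re(z_j)\geq p$, I can factor $B^{z_j-1+is} = B^{p+\frac14}\cdot(\text{bounded})$ and apply the third inequality of Lemma \ref{second psacta lemma} (with the $is$ shift harmless since $\|B^{is}\|\leq 1$) to get $\|B^{z_1-1+is}-B^{z_2-1+is}\|_{\frac{p}{p-2},1}\leq c_p|z_1-z_2|$; then pair it with $\|C_1\|_{\frac p2,\infty}\leq c_p(1+|z_1|+|s|)\|[BA^{\frac12},A^{\frac12}]\|_{\frac p2,\infty}$ from Lemma \ref{first psacta lemma}. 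For the second piece I pair $\|B^{z_2-1+is}\|_{\frac{p}{p-2},1}\leq c_p'$ (the bound from Lemma \ref{tz boundedness lemma}) with $\|C_1 - C_2\|_{\frac p2,\infty} = \|[BA^{\frac12},A^{z_1-\frac12+is}-A^{z_2-\frac12+is}]\|_{\frac p2,\infty}$; here I'd note $A^{z_j-\frac12+is} = A^{\frac12}\cdot A^{z_j-1+is}$ so this commutator is controlled, up to the commutator $[BA^{\frac12},A^{\frac12}]$ times a bounded factor, by $\|[BA^{\frac12},A^{z_1-1+is}-A^{z_2-1+is}]\|_{\frac p2,\infty}$, which is $\leq c_p|z_1-z_2|\max\{1,|z_1|,|z_2|,|s|\}\|[BA^{\frac12},A^{\frac12}]\|_{\frac p2,\infty}$ by the first inequality of Lemma \ref{second psacta lemma} (the $+is$ shift only adds an $|s|$ to the polynomial factor via Lemma \ref{continuity phi compute lemma}). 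For the third piece, arising from the second summand, I pair $\|[BA^{\frac12},A^{\frac12+is}]\|_{\frac p2,\infty}\leq c_p(1+|s|)\|[BA^{\frac12},A^{\frac12}]\|_{\frac p2,\infty}$ (second inequality of Lemma \ref{first psacta lemma}) with $\|Y^{z_1-1-is}-Y^{z_2-1-is}\|_{\frac{p}{p-2},1}\leq c_p|z_1-z_2|$ (second inequality of Lemma \ref{second psacta lemma}, again absorbing the $is$).

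Summing the three contributions and using $(1+|z_1|+|z_2|+|s|)^2 \lesssim$ a product of linear factors — or more simply just collecting the worst polynomial weight, which is linear in each of $|z_1|,|z_2|,|s|$ — gives the claimed bound $\|T_{z_1}(s)-T_{z_2}(s)\|_1 \leq c_p(1+|z_1|+|z_2|+|s|)|z_1-z_2|\|[BA^{\frac12},A^{\frac12}]\|_{\frac p2,\infty}$. Finally I'd dispatch the $s=0$ case: $T_z(0) = B^{z-1}[BA^{\frac12},A^{z-\frac12}] + [BA^{\frac12},A^{\frac12}]Y^{z-1}$, whose difference telescopes in exactly the same two-and-one-term pattern, and the same lemmas apply verbatim with $s=0$. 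The main obstacle I anticipate is purely bookkeeping: making sure every $z$-dependent factor in each summand is isolated on one side of a single difference, that the spectator factors (the various $B^{is}$, $Y^{-is}$, and fixed powers) really are uniformly bounded, and that the shift-by-$is$ versions of Lemmas \ref{continuity phi compute lemma} and \ref{second psacta lemma} genuinely hold with only a linear-in-$|s|$ loss — none of which is deep, but all of which must be checked carefully to land the precise polynomial weight stated.
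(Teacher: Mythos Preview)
Your proposal is correct and follows essentially the same approach as the paper: the same three-term telescoping decomposition of $T_{z_1}(s)-T_{z_2}(s)$, the same H\"older pairing $\Lc_{\frac{p}{p-2},1}\times\Lc_{\frac{p}{2},\infty}\to\Lc_1$, and the same invocations of Lemmas \ref{first psacta lemma} and \ref{second psacta lemma} with the $is$-shift absorbed into the polynomial weight. The only difference is cosmetic: for the piece $[BA^{\frac12},A^{z_1-\frac12+is}-A^{z_2-\frac12+is}]$ the paper applies Lemma \ref{second psacta lemma} directly with the arguments shifted by $\frac12+is$, whereas you first factor out $A^{\frac12}$ via Leibniz---an unnecessary detour, since the same shift argument you already acknowledge for the $is$ handles the extra $\frac12$ just as well.
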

\begin{proof} 
We deal with $s\neq 0$, with the $s=0$ case being similar.
We begin by rewriting $T_{z_1}(s)-T_{z_2}(s)$ as
\begin{align*}
    T_{z_1}(s)-T_{z_2}(s)&=B^{is}[BA^{\frac{1}{2}},A^{\frac{1}{2}+is}]\cdot\big(Y^{z_1-1-is}-Y^{z_2-1-is}\big)\\
                         &+\big(B^{z_1-1+is}-B^{z_2-1+is}\big)[BA^{\frac{1}{2}},A^{z_1-\frac{1}{2}+is}]Y^{-is}\\
                         &+B^{z_2-1+is}\big([BA^{\frac{1}{2}},A^{z_1-\frac{1}{2}+is}-A^{z_2-\frac12+is}]\big)Y^{-is}.
\end{align*}
By the triangle inequality in $\Lc_1,$ we have
\begin{align*}
\Big\|T_{z_1}(s)-T_{z_2}(s)\Big\|_1&\leq \Big\|[BA^{\frac{1}{2}},A^{\frac{1}{2}+is}]\cdot\big(Y^{z_1-1}-Y^{z_2-1}\big)\Big\|_1\\
                                   &+\Big\|\big(B^{z_1-1}-B^{z_2-1}\big)\cdot [BA^{\frac{1}{2}},A^{z_1-\frac{1}{2}+is}]\Big\|_1\\
                                   &+\Big\|B^{z_2-1}\cdot [BA^{\frac{1}{2}},A^{z_1-\frac{1}{2}+is}-A^{z_2-\frac{1}{2}+is}]\Big\|_1.
\end{align*}
By the H\"older-type inequality \eqref{weird_holder}, we have
\begin{align*}
    \Big\|T_{z_1}(s)-T_{z_2}(s)\Big\|_1&\leq \Big\|[BA^{\frac{1}{2}},A^{\frac{1}{2}+is}]\Big\|_{\frac{p}{2},\infty}\Big\|Y^{z_1-1}-Y^{z_2-1}\Big\|_{\frac{p}{p-2},1}\\
                                       &+\Big\|B^{z_1-1}-B^{z_2-1}\Big\|_{\frac{p}{p-2},1}\Big\|[BA^{\frac{1}{2}},A^{z_1-\frac{1}{2}+is}]\Big\|_{\frac{p}{2},\infty}\\
                                       &+\Big\|B^{z_2-1}\Big\|_{\frac{p}{p-2},1}\Big\|[BA^{\frac{1}{2}},A^{z_1-\frac{1}{2}+is}-A^{z_2-\frac{1}{2}+is}]\Big\|_{\frac{p}{2},\infty}.
\end{align*}

By Lemma \ref{first psacta lemma}, there exists a constant $c_p'>0$ such that
$$\|[BA^{\frac{1}{2}},A^{\frac{1}{2}+is}]\|_{\frac{p}{2},\infty}\leq c_p'(1+|s|)\|[BA^{\frac{1}{2}},A^{\frac12}]\|_{\frac{p}{2},\infty},$$
$$\|[BA^{\frac{1}{2}},A^{z_1-\frac{1}{2}+is}]\|_{\frac{p}{2},\infty}\leq c_p'(1+|z_1|+|s|)\|[BA^{\frac{1}{2}},A^{\frac12}]\|_{\frac{p}{2},\infty}.$$
Also, by normalisation,
$$\Big\|B^{z_2-1}\Big\|_{\frac{p}{p-2},1}\leq \Big\|B^{z_2-p}\|_{\infty}\Big\|B^{p-1}\Big\|_{\frac{p}{p-2},1}\stackrel{\leq} c_p'\|B\|_{p,\infty}^{p-1}=:c_p''.$$
Therefore,
\begin{align*}
    \Big\|T_{z_1}(s)-T_{z_2}(s)\Big\|_1&\leq c_p'(1+|s|)\Big\|[BA^{\frac{1}{2}},A^{\frac{1}{2}}]\Big\|_{\frac{p}{2},\infty}\Big\|Y^{z_1-1}-Y^{z_2-1}\Big\|_{\frac{p}{p-2},1}\\
    &+c_p'(1+|z_1|+|s|)\Big\|B^{z_1-1}-B^{z_2-1}\Big\|_{\frac{p}{p-2},1}\Big\|[BA^{\frac{1}{2}},A^{\frac{1}{2}}]\Big\|_{\frac{p}{2},\infty}\\
    &+c_p''\Big\|[BA^{\frac{1}{2}},A^{z_1-\frac{1}{2}+is}-A^{z_2-\frac{1}{2}+is}]\Big\|_{\frac{p}{2},\infty}.
\end{align*}

By Lemma \ref{second psacta lemma}, there is some $c_p'''$ such that
$$\Big\|[BA^{\frac{1}{2}},A^{z_1-\frac{1}{2}+is}-A^{z_2-\frac{1}{2}+is}]\Big\|_{\frac{p}{2},\infty}\leq c_p'''|z_1-z_2|\cdot(1+|z_1|+|z_2|+|s|)\Big\|[BA^{\frac{1}{2}},A^{\frac{1}{2}}]\Big\|_{\frac{p}{2},\infty},$$
$$\Big\|Y^{z_1-1}-Y^{z_2-1}\Big\|_{\frac{p}{p-2},1},\Big\|B^{z_1-1}-B^{z_2-1}\Big\|_{\frac{p}{p-2},1}\leq c_p'''|z_1-z_2|.$$
Therefore,
\begin{align*}
    \Big\|T_{z_1}(s)-T_{z_2}(s)\Big\|_1&\leq c_p'c_p'''(1+|s|)|z_1-z_2|\Big\|[BA^{\frac{1}{2}},A^{\frac{1}{2}}]\Big\|_{\frac{p}{2},\infty}\\
    &+c_p'c_p'''(1+|z_1|+|s|)|z_1-z_2|\Big\|[BA^{\frac{1}{2}},A^{\frac{1}{2}}]\Big\|_{\frac{p}{2},\infty}\\
    &+c_p''c_p'''|z_1-z_2|(1+|z_1|+|z_2|+|s|)\Big\|[BA^{\frac{1}{2}},A^{\frac{1}{2}}]\Big\|_{\frac{p}{2},\infty}.
\end{align*}
This completes the proof.
\end{proof}

In addition to controlling the $\Lc_1$ norm of $T_{z_1}(s)-T_{z_2}(s)$, we also need good control on the function $g_z$ appearing in Theorem \ref{csz key lemma},
which the next lemma provides. Denote by $W^2_2$ the Sobolev space of functions on $\Rl$ such that
\[
    \|g\|_{W^{2}_2}^2 = \|g\|_2^2+\|g'\|_2^2+\|g''\|_2^2 < \infty.
\]

\begin{lemma}\label{gz continuity lemma} There exists $n\in\mathbb{N}$ such that
$$\|g_z\|_{W^2_2}\leq c_{{\rm abs}}(1+|z|)^{n-1},\quad\re(z)\geq 2,$$
$$\|g_{z_1}-g_{z_2}\|_{W^2_2}\leq c_{{\rm abs}}|z_1-z_2|\cdot (1+|z_1|)^{n-1}\cdot (1+|z_2|)^{n-1},\quad\re(z_1),\re(z_2)\geq 2.$$
\end{lemma}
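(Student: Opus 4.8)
The plan is to work directly from the explicit formula for $g_z$ given in Theorem \ref{csz key lemma}, namely
\[
    g_z(t) = 1 - \frac{e^{\frac{z}{2}t}-e^{-\frac{z}{2}t}}{(e^{\frac{t}{2}}-e^{-\frac{t}{2}})(e^{(\frac{z-1}{2})t}+e^{-(\frac{z-1}{2})t})},
\]
together with the value $g_z(0) = 1-\frac{z}{2}$. Rewriting $e^{\frac z2 t}-e^{-\frac z2 t} = 2\sinh(\frac{zt}{2})$ and the denominator as $2\sinh(\frac t2)\cdot 2\cosh(\frac{(z-1)t}{2})$, one sees that $g_z$ is a ratio of entire functions of $t$, and the apparent singularity at $t=0$ is removable. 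The first step is to obtain pointwise bounds: I would split $\Rl$ into the bounded region $|t|\leq 1$ and the tails $|t|\geq 1$. On the tails, $|e^{\frac t2}-e^{-\frac t2}|^{-1}$ decays like $e^{-|t|/2}$, while $|e^{(\frac{z-1}{2})t}+e^{-(\frac{z-1}{2})t}|$ behaves like $e^{(\re(z)-1)|t|/2}$, and the numerator is $O(e^{\re(z)|t|/2})$; putting these together shows the fraction is $O(e^{-|t|})$ uniformly, so that $g_z(t) = 1 + O(e^{-|t|})$ on the tails with constants uniform in $z$ (for $\re(z)\geq 2$), but note that this is not quite what is wanted since $g_z$ must be in $W^2_2$, meaning $g_z$ itself (not $g_z-1$) must decay — so I should double check against \cite[Remark 5.2.2]{SZ-asterisque} that in fact the "$1$" cancels, i.e.\ the fraction tends to $1$ at infinity. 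Indeed as $t\to+\infty$ the fraction is asymptotic to $\frac{e^{zt/2}}{e^{t/2}e^{(z-1)t/2}} = 1$, so $g_z(t)\to 0$ exponentially; on the bounded region $|t|\leq 1$ one estimates $g_z$ and its derivatives by Taylor expansion in $t$ around $0$, where polynomial-in-$z$ bounds appear because $g_z(0) = 1-\frac z2$ and the Taylor coefficients are polynomials in $z$.

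The second step is to differentiate. Since $\frac{d}{dt}$ and $\frac{d^2}{dt^2}$ of $g_z$ can be computed by the quotient rule applied to the sinh/cosh expressions, each derivative introduces extra polynomial factors in $z$ (from differentiating $\sinh(\frac{zt}{2})$, $\cosh(\frac{(z-1)t}{2})$, which produce factors of $\frac z2$, $\frac{z-1}{2}$) and does not destroy the exponential decay in $t$ on the tails. Thus one gets $|g_z^{(j)}(t)| \leq C(1+|z|)^{m_j} e^{-|t|}$ on $|t|\geq 1$ for $j=0,1,2$ and some fixed integers $m_j$, and $|g_z^{(j)}(t)|\leq C(1+|z|)^{m_j'}$ on $|t|\leq 1$. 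Integrating the squares over $\Rl$ yields $\|g_z\|_{W^2_2}\leq c_{\mathrm{abs}}(1+|z|)^{n-1}$ with $n-1 := \max_j\{m_j, m_j'\}$ (after relabelling), which is the first asserted inequality.

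For the second inequality, the key observation is that $g_{z_1}-g_{z_2}$ (and likewise for the derivatives) can be written as an integral $g_{z_1}-g_{z_2} = (z_1-z_2)\int_0^1 (\partial_w g_w)|_{w = z_2+\lambda(z_1-z_2)}\,d\lambda$, where $\partial_w g_w$ is again computed explicitly from the sinh/cosh formula. Differentiating the formula in $w$ brings down a factor linear in $t$ times a similar sinh/cosh expression, so one obtains the bound $|\partial_w g_w^{(j)}(t)|\leq C(1+|w|)^{m_j''}(1+|t|)e^{-|t|}$ on the tails and a polynomial-in-$w$ bound on $|t|\leq 1$; since $(1+|t|)e^{-|t|}$ is still integrable and square-integrable, integrating gives $\|\partial_w g_w\|_{W^2_2}\leq c_{\mathrm{abs}}(1+|w|)^{n-1}$. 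Along the segment $w = z_2+\lambda(z_1-z_2)$ we have $|w|\leq \max\{|z_1|,|z_2|\}\leq (1+|z_1|)(1+|z_2|)$, so $\sup_\lambda (1+|w|)^{n-1}\leq (1+|z_1|)^{n-1}(1+|z_2|)^{n-1}$; pulling the $W^2_2$-norm through the integral (Minkowski's integral inequality) then gives $\|g_{z_1}-g_{z_2}\|_{W^2_2}\leq |z_1-z_2|\sup_\lambda\|\partial_w g_w\|_{W^2_2}\leq c_{\mathrm{abs}}|z_1-z_2|(1+|z_1|)^{n-1}(1+|z_2|)^{n-1}$, as required.

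The main obstacle I anticipate is purely bookkeeping: carefully verifying that the cancellation of the constant term makes $g_z$ itself (not merely $g_z-1$) lie in $L_2$, and tracking the exact power of $(1+|z|)$ through three successive differentiations (in $t$) composed with one differentiation (in $z$), so that the final exponent is a single integer $n-1$ valid uniformly. There is no serious analytic difficulty — all bounds come from elementary estimates on $\sinh$, $\cosh$, and exponentials on the two regions $|t|\leq 1$ and $|t|\geq 1$ — but the constants and exponents must be organised cleanly, and one should be slightly careful near $t=0$ to use the removable-singularity (Taylor) form of $g_z$ rather than the quotient form.
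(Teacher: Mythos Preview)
Your proposal is correct and follows essentially the same strategy as the paper: split $\Rl$ into a bounded neighbourhood of the origin and the tails, and estimate $g_z$ and its $t$-derivatives separately on each piece, with the Lipschitz-in-$z$ bound coming from a mean-value/integral argument. The paper streamlines the algebra by first rewriting
\[
    g_z(s)=\tfrac12\Big(\frac{\tanh(\tfrac{z-1}{2}s)}{\tanh(\tfrac{s}{2})}-1\Big),
\]
which makes both the removable singularity at $s=0$ and the decay at infinity transparent (since $\tanh\to 1$), and then factors $g_z$ on $[-1,1]$ and on $[1,\infty)$ as a product of a $z$-dependent piece ($f_z(s)=\tanh(\tfrac{z-1}{2}s)/s$, respectively $h_z(s)=\tanh(\tfrac{z-1}{2}s)-1$) with a fixed $z$-independent factor, reducing the Sobolev estimates via the Leibniz rule to estimates on $f_z$, $h_z$ and their differences; the actual computations are then declared ``omitted.'' Your direct sinh/cosh bookkeeping and the integral representation $g_{z_1}-g_{z_2}=(z_1-z_2)\int_0^1\partial_w g_w\,d\lambda$ accomplish exactly the same thing without the preliminary rewriting, at the cost of slightly messier exponent tracking.
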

\begin{proof}
       For $s\neq 0$ and $\re(z)>1,$ we rewrite the explicit form of $g_z$ in Theorem \ref{csz key lemma} as follows: 
    \[
        g_z(s) = \frac{1}{2}\left(\frac{\tanh(\frac{z-1}{2}s)}{\tanh(\frac{s}{2})}-1\right).
    \]
It is immediate that
$$\|g_z\|_{W^2_2}^2=\|g_z\|_{W^2_2([-1,1])}^2+\|g_z\|_{W^2_2([1,\infty))}^2+\|g_z\|_{W^2_2((-\infty,1])}^2,$$
$$\|g_{z_1}-g_{z_2}\|_{W^2_2}^2=\|g_{z_1}-g_{z_2}\|_{W^2_2([-1,1])}^2+\|g_{z_1}-g_{z_2}\|_{W^2_2([1,\infty))}^2+\|g_{z_1}-g_{z_2}\|_{W^2_2((-\infty,-1])}^2.$$

On $[-1,1],$ we write
$$g_z=\frac12\Big(f_z\cdot f_2^{-1}-1\Big),$$
where
$$f_z(s)=\frac{\tanh(\frac{z-1}{2}s)}{s},\quad s\in[-1,1].$$
By the Leibniz rule, it suffices to verify 
$$\|f_z\|_{W^2_2([-1,1])}\leq c_{{\rm abs}}(1+|z|)^{n-1},\quad\re(z)\geq 2,$$
$$\|f_{z_1}-f_{z_2}\|_{W^2_2([-1,1])}\leq c_{{\rm abs}}|z_1-z_2|\cdot (1+|z_1|)^{n-1}\cdot (1+|z_2|)^{n-1},\quad\re(z_1),\re(z_2)\geq 2.$$
This computation is omitted.

On $[1,\infty),$ we write
$$g_z=(h_z-h_2)\cdot\frac1{2(1+h_2)},$$
where
$$h_z(s)=\tanh(\frac{z-1}{2}s)-1,\quad s\in[1,\infty).$$
By the Leibniz rule, it suffices to verify 
$$\|h_z\|_{W^2_2([1,\infty))}\leq c_{{\rm abs}}(1+|z|)^{n-1},\quad\re(z)\geq 2,$$
$$\|h_{z_1}-h_{z_2}\|_{W^2_2([1,\infty))}\leq c_{{\rm abs}}|z_1-z_2|\cdot (1+|z_1|)^{n-1}\cdot (1+|z_2|)^{n-1},\quad\re(z_1),\re(z_2)\geq 2.$$
This computation is omitted.

On $(-\infty,-1],$ the argument is exactly the same as on $[1,\infty).$
\end{proof}

\subsection{Proof of Theorem \ref{main_analytic_theorem}}
We now can complete the proof of Theorem \ref{main_analytic_theorem}. We will make use of the inequality
\begin{equation}\label{sobolev_ineq}
    \|\widehat{f}\|_1,\|\widehat{f'}\|_1 \leq c_{\mathrm{abs}}\|f\|_{W^2_2},\quad f \in W^2_2(\mathbb{R}).
\end{equation}
Indeed, denoting $H(s) = (1+s^2)^{\frac12},$ the Cauchy-Schwarz inequality and the Plancherel theorem imply that
\[
    \|\widehat{f'}\|_1 = \|H\frac{1}{H}\widehat{f'}\|_1 \leq \|H^{-1}\|_{2}\|H\widehat{f'}\|_2\leq c_{\mathrm{abs}}\|f\|_{W^2_2}.
\]
Similarly, $\|\widehat{f}\|_1 \leq c_{\mathrm{abs}}\|f\|_{W^1_2} \leq c_{\mathrm{abs}}\|f\|_{W^2_2}.$

\begin{proof}[Proof of Theorem \ref{main_analytic_theorem}] 
By Theorem \ref{csz key lemma}, the mapping
$$s\mapsto T_z(s)\widehat{g}_z(s)\quad \re(z)>p$$
is continuous in the weak operator topology. By Lemma \ref{tz boundedness lemma}, we have
$$\|T_z(s)\widehat{g}_z(s)\|_1\leq c_{p,A,B}(1+|z|)(1+|s|)\cdot|\widehat{g}_z(s)|.$$
Since $g_z$ belongs to the Schwartz class, we have
\[
    \int_{-\infty}^\infty \|T_z(s)\widehat{g}_z(s)\|_1 \,ds < \infty.
\]

It follows (see e.g. \cite[Lemma 2.3.2]{SZ-asterisque}) that the integral of the function $s\mapsto T_z(s)\widehat{g}_z(s)$  exists
in the weak operator topology and
$$\int_{\mathbb{R}}T_z(s)\widehat{g}_z(s)ds\in\mathcal{L}_1.$$
The same result implies that
$${\rm Tr}\Big(\int_{\mathbb{R}}T_z(s)\widehat{g}_z(s)ds\Big)=\int_{\mathbb{R}}{\rm Tr}(T_z(s))\widehat{g}_z(s)ds.$$
From Lemma \ref{tz boundedness lemma}, $T_z(0)\in \Lc_1$ and hence
$${\rm Tr}\Big(T_z(0)-\int_{\mathbb{R}}T_z(s)\widehat{g}_z(s)ds\Big)={\rm Tr}(T_z(0))-\int_{\mathbb{R}}{\rm Tr}(T_z(s))\widehat{g}_z(s)ds.$$
By Theorem \ref{csz key lemma} and the definition of $f_{A,B},$ we have
$$f_{A,B}(z)={\rm Tr}\Big(T_z(0)-\int_{\mathbb{R}}T_z(s)\widehat{g}_z(s)ds\Big),\quad\re(z)>p.$$
Thus,
\begin{equation}\label{f extended def}
f_{A,B}(z)={\rm Tr}(T_z(0))-\int_{\mathbb{R}}{\rm Tr}(T_z(s))\widehat{g}_z(s)ds,\quad\re(z)>p.
\end{equation}
Let us show that $f_{A,B}$ satisfies the required inequality. Indeed, we have
\begin{align*}
|f_{A,B}(z_1)-f_{A,B}(z_2)|&\leq|{\rm Tr}(T_{z_1}(0))-{\rm Tr}(T_{z_2}(0))|+\int_{\mathbb{R}}|{\rm Tr}(T_{z_1}(s))-{\rm Tr}(T_{z_2}(s))|\cdot |\widehat{g}_{z_1}(s)|ds\\
               &\quad+\int_{\mathbb{R}}|{\rm Tr}(T_{z_2}(s))|\cdot |\widehat{g}_{z_1}(s)-\widehat{g}_{z_1}(s)|ds\\
               &\leq\|T_{z_1}(0)-T_{z_2}(0)\|_1+\int_{\mathbb{R}}\|T_{z_1}(s)-T_{z_2}(s)\|_1\cdot |\widehat{g}_{z_1}(s)|ds\\
               &\quad +\int_{\mathbb{R}}\|T_{z_2}(s)\|_1\cdot |\widehat{g}_{z_1}(s)-\widehat{g}_{z_1}(s)|ds.
\end{align*}
Using Lemmas \ref{tz boundedness lemma} and \ref{tz continuity lemma}, we obtain
\begin{align*}
|f_{A,B}(z_1)-f_{A,B}(z_2)| &\leq c_{p,A,B}(1+|z_1|+|z_2|)\cdot |z_1-z_2|\\
                &\quad +c_{p,A,B}|z_1-z_2|(1+|z_1|+|z_2|)\int_{\mathbb{R}}(1+|s|)\cdot |\widehat{g}_{z_1}(s)|ds\\
                &\quad +c_{p,A,B}(1+|z_2|)\cdot\int_{\mathbb{R}}(1+|s|)\cdot |\widehat{g}_{z_1}(s)-\widehat{g}_{z_1}(s)|ds.
\end{align*}
Now, from \eqref{sobolev_ineq}
\begin{align*} 
\int_{\mathbb{R}}(1+|s|)\cdot |\widehat{g}_{z_1}(s)|ds &= \|\widehat{g}_{z_1}\|_1+\|\widehat{g'_{z_1}}\|_1\leq c_{{\rm abs}}\|g_z\|_{W^2_2},\\
\int_{\mathbb{R}}(1+|s|)\cdot |\widehat{g}_{z_1}(s)-\widehat{g}_{z_1}(s)|ds &=\|\widehat{g}_{z_1}-\widehat{g}_{z_2}\|_1+\|\widehat{g'_{z_1}}-\widehat{g'_{z_2}}\|_1\leq c_{{\rm abs}}\|g_{z_1}-g_{z_2}\|_{W^2_2}.
\end{align*}
The desired bound on $|f_{A,B}(z_1)-f_{A,B}(z_2)|$ follows now from Lemma \ref{gz continuity lemma}.
\end{proof}

\section{Applications to Connes' integration formula}
    We now explain in several examples how the preceding Tauberian theorems imply exact asymptotic
    formulae for the spectra of certain operators on noncommutative spaces.
    
    The following definition is standard, and follows e.g. \cite{ConnesSpectral}, \cite[Definition 2]{CPRS1}, \cite[Chapter 10]{green-book}.
    \begin{definition}\label{spectral_triple_definition}
        A spectral triple $(\Ac,H,D)$ consists of the following data:
        \begin{enumerate}[{\rm (i)}]
            \item{} a separable Hilbert space $H,$
            \item{} a $*$-algebra $\Ac$ of bounded linear endomorphisms of $H$, containing the identity operator,
            \item{} and a self-adjoint operator $D$ on $H$, such that for all $a \in \Ac,$ $a(\mathrm{dom}(D))\subset \mathrm{dom}(D)$, and the commutator
            \[
                [D,a] = Da-aD:\mathrm{dom}(D)\to H
            \]
            has bounded extension.
        \end{enumerate}
        A spectral triple $(\Ac,H,D)$ is said to be $\Lc_{p,\infty}$-summable, for $p>0$, if 
        \[
            (i+D)^{-1} \in \Lc_{p,\infty}(H).
        \]
        Equivalently, $(1+D^2)^{-\frac{1}{2}}\in \Lc_{p,\infty}(H)$ or $(i-D)^{-1}\in \Lc_{p,\infty}(H).$
        A spectral triple $(\Ac,H,D)$ is said to be $QC^1$ if the operators
        \[
            \delta(a) := [(1+D^2)^{\frac{1}{2}},a]:\mathrm{dom}(D)\to H,\quad a \in \Ac.
        \]
        also have bounded extension. Equivalently, $[|D|,a]$ has bounded extension for every $a \in \Ac.$
    \end{definition}
    It is common to make further assumptions on the spectral triple, especially relating to the meromorphic continuation of the $\zeta$-functions
    \[
        z\mapsto \Tr(b(1+D^2)^{-\frac{z}{2}}),\quad \re(z)>p,\; b=a,\delta(a),\quad a \in \Ac.
    \]
    We will work with spectral triples obeying the following condition:
    \begin{condition}\label{zeta_condition}
        For all $0\leq a \in \Ac$, there exists $c \in \Cplx$ such that the function
        \[
            z\mapsto \Tr(a^z(1+D^2)^{-\frac{z}{2}})-\frac{c}{z-p},\quad \re(z)>p
        \]
        admits a continuous extension to the closed half-plane $\re(z)\geq p.$
    \end{condition}
    This should be compared with the condition of discrete dimension spectrum of Connes and Moscovici \cite[Definition II.1]{CM1995} and related notions such as isolated dimension spectrum \cite[Definition 3.1]{CGRS2}.
    Condition \ref{zeta_condition} is not obviously implied by either of these assumptions, however the condition is at least easy to verify in a number of cases, as we demonstrate below.
    
    \begin{example}
        The main commutative example is algebra $\Ac = C^\infty(X)$, where $X$ is a smooth closed Riemannian spin manifold $X$, Hilbert space $H=L_2(X,E)$ of square-integrable
        sections of a spinor bundle $E$ on $X$ and Dirac-type operator $D$ acting on section of $E.$ Here, $\Ac$ is considered to act
        on $H$ by pointwise multiplication, $M_f\xi = f\xi,$ where $f \in \Ac$ and $\xi\in H.$ In this case, $(\Ac,H,D)$ is $\Lc_{d,\infty}$-summable, where $d$ is the dimension of the manifold $X.$
        For discussion of this class of examples, see e.g. \cite[Section 11.1]{green-book}.
                
        Standard results in elliptic theory \cite[Theorem 12.1]{Shubin-psido-2001}, \cite[Section 4.4]{Grubb1996} imply that for $0\leq f \in C^\infty(X)$
        \[
            z\mapsto \Tr(M_f^z(1+D^2)^{-\frac{z}{2}}),\quad \re(z)>d
        \]
        admits a meromorphic continuation to $\Cplx$, with only simple poles located at some subset of the points
        \[
            \{d,d-2,d-4,\ldots\}.
        \]
        For $f\geq 0,$ denote $c = \res_{z=d} \Tr(M_f^z(1+D^2)^{-\frac{z}{2}})$. It follows that the function
        \[
            \Tr(M_f^z(1+D^2)^{-\frac{z}{2}})-\frac{c}{z-d},\quad \re(z)>d
        \]
        admits a continuous extension to the half-plane $\re(z)\geq d.$
        Hence, in this case, Condition \ref{zeta_condition} is satisfied. 
    \end{example}
    
    Since the algebra $\Ac$ of a spectral triple $(\Ac,H,D)$ is in general not closed under continuous functional calculus,
    it is helpful to complete $\Ac$ in $\Lc_{\infty}(H).$ 
    \begin{definition}
        Let $(\Ac,H,D)$ be a spectral triple. We denote by $\overline{\Ac}$ the $C^*$-subalgebra of $\Lc_{\infty}(H)$
        generated by $\Ac.$
    \end{definition}
    
    \begin{remark}\label{positive_approximation_remark}
        By definition, every $a \in \overline{\Ac}$ is the limit in operator norm of a sequence $\{a_n\}_{n=0}^\infty$ in $\Ac.$
        We note that if $a$ is positive, then it is possible to select every $a_n$ positive.
        Indeed, since $a$ is positive, there exists
        $b \in \overline{\Ac}$ such that $a = b^*b$, and since $b\in \overline{\Ac}$ there exists a sequence $\{b_n\}_{n\geq 0}$ in $\Ac$ such that $b_n\to b.$ Defining $a_n = b_n^*b_n$
        gives
        \[
            \|a-a_n\| = \|b^*b-b_n^*b_n\| \leq \|b^*(b-b_n)\|+\|b_n^*(b-b_n)\|.
        \]
        Since $\lim_{n\to\infty} \|b_n\|=\|b\|,$ it follows that
        \[
            \limsup_{n\to\infty} \|a-a_n\| \leq \|b\| \limsup_{n\to\infty} \|b-b_n\| = 0.
        \]
    \end{remark}

    We now recall the statement of Theorem \ref{main_spectral_triple_theorem} and give a proof.
    \begin{proposition}\label{abstract_cif}
        Let $(\Ac,H,D)$ be a spectral triple satisfying Condition \ref{zeta_condition},
        For all $0\leq a \in \overline{\Ac},$ there exists the limit
        \[
            \lim_{t\to\infty} t\mu(t,(1+D^2)^{-\frac{p}{4}}a(1+D^2)^{-\frac{p}{4}}).
        \]
        For $a=x^{2p},$ where $0\leq x \in \Ac,$ this limit is given by
        \[
            \lim_{t\to\infty} t\mu(t,(1+D^2)^{-\frac{p}{4}}x^{2p}(1+D^2)^{-\frac{p}{4}}) = \frac{1}{p}\lim_{z\downarrow p}(z-p)\Tr(x^{2z}(1+D^2)^{-\frac{z}{2}}.
        \]
    \end{proposition}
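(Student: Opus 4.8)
The plan is to reduce both assertions to the abstract Tauberian theorem, Theorem \ref{tauberian_corollary}, applied with the choice $A = x^2$ and $B = (1+D^2)^{-\frac12}$, and then to use a density argument to pass from $x \in \Ac$ to an arbitrary positive element of $\overline{\Ac}$.

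First I would treat the case $a = x^{2p}$ with $0 \le x \in \Ac$. Set $A := x^2$ and $B := (1+D^2)^{-\frac12}$; then $A^{\frac12} = x$, $A^p = x^{2p}$ and $B^{\frac{p}{2}} = (1+D^2)^{-\frac{p}{4}}$, so $B^{\frac{p}{2}}A^p B^{\frac{p}{2}}$ is exactly the operator under consideration. It remains to check that $A,B,p$ satisfy Condition \ref{compact conditions for analyticity} and the zeta hypothesis of Theorem \ref{main_tauberian_theorem}. Hypothesis (i), $B \in \Lc_{p,\infty}$, is precisely the $\Lc_{p,\infty}$-summability assumption. For hypothesis (ii) I would invoke the $QC^1$ condition through the operator identity
\[
    [(1+D^2)^{-\frac12}, x] = -(1+D^2)^{-\frac12}\,\delta(x)\,(1+D^2)^{-\frac12},
\]
where $\delta(x) = [(1+D^2)^{\frac12}, x]$ is bounded; combined with $(1+D^2)^{-\frac12} \in \Lc_{p,\infty}$ and the Hölder inequality for weak ideals this gives $[B, A^{\frac12}] \in \Lc_{\frac{p}{2},\infty}$. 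For the zeta hypothesis, note that $x^2 \in \Ac$ and $x^2 \ge 0$, so Condition \ref{zeta_condition} supplies $c$ with $z \mapsto \Tr((x^2)^z(1+D^2)^{-\frac z2}) - \frac{c}{z-p}$ continuous on $\re(z) \ge p$; since $(x^2)^z = x^{2z}$ and, for $\re(z) > p$, the operator $B^z A^z = (1+D^2)^{-\frac z2}x^{2z}$ is trace class so that $\Tr(B^z A^z) = \Tr(x^{2z}(1+D^2)^{-\frac z2})$, this is exactly the hypothesis of Theorem \ref{main_tauberian_theorem} with the same $c$. Theorem \ref{tauberian_corollary} then yields $\lim_{t\to\infty} t\mu(t,(1+D^2)^{-\frac{p}{4}}x^{2p}(1+D^2)^{-\frac{p}{4}}) = \frac{c}{p}$, and reading the residue off the continuous extension gives $c = \lim_{z\downarrow p}(z-p)\Tr(x^{2z}(1+D^2)^{-\frac z2})$, which is the claimed formula.

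For general $0 \le a \in \overline{\Ac}$ I would set $x := a^{\frac{1}{2p}} \in \overline{\Ac}$ (continuous functional calculus in the $C^*$-algebra $\overline{\Ac}$), so that $a = x^{2p}$. By Remark \ref{positive_approximation_remark} there is a sequence $0 \le x_n \in \Ac$ with $x_n \to x$ in operator norm; put $a_n := x_n^{2p} \in \Ac$, so that $\|a_n - a\| = \|x_n^{2p} - x^{2p}\| \to 0$ by continuity of $t\mapsto t^{2p}$ on a bounded interval. With $X_n := (1+D^2)^{-\frac{p}{4}}a_n(1+D^2)^{-\frac{p}{4}}$ and $X := (1+D^2)^{-\frac{p}{4}}a(1+D^2)^{-\frac{p}{4}}$, and using $(1+D^2)^{-\frac{p}{4}} \in \Lc_{2,\infty}$ (a consequence of $\Lc_{p,\infty}$-summability), Hölder's inequality gives $\|X_n - X\|_{1,\infty} \le c\,\|(1+D^2)^{-\frac{p}{4}}\|_{2,\infty}^2\,\|a_n - a\| \to 0$, so $X_n \to X$ in $\Lc_{1,\infty}$. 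By the first part each limit $\lim_{t\to\infty} t\mu(t,X_n)$ exists, so Lemma \ref{bs_perturbation_lemma_sequential} (with exponent $1$) yields the existence of $\lim_{t\to\infty} t\mu(t,X)$, completing the proof.

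The only genuinely delicate points are, first, the verification of hypothesis (ii) of Condition \ref{compact conditions for analyticity}: this is where the $QC^1$ structure is converted into the weak-ideal commutator bound required by the Tauberian theorem, and it is crucial that summability makes $(1+D^2)^{-\frac12}$ an $\Lc_{p,\infty}$-operator so that sandwiching a bounded operator between two copies lands in $\Lc_{\frac{p}{2},\infty}$. Second, in the passage to $\overline{\Ac}$ one must not approximate $a$ directly by positive elements of $\Ac$ — the exact asymptotics are established only for $2p$-th powers of algebra elements — but rather approximate $x = a^{\frac{1}{2p}}$ and raise to the power $2p$; everything else is a routine application of Hölder's inequality and the perturbation lemmas already established.
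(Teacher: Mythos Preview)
Your proposal is correct and follows essentially the same approach as the paper: the same choice $A = x^2$, $B = (1+D^2)^{-1/2}$, the same verification of Condition \ref{compact conditions for analyticity} via the $QC^1$ identity $[(1+D^2)^{-1/2},x] = -(1+D^2)^{-1/2}\delta(x)(1+D^2)^{-1/2}$, and the same density argument approximating $a^{1/(2p)}$ by positive elements of $\Ac$ and invoking Lemma \ref{bs_perturbation_lemma_sequential}. If anything, you are slightly more explicit than the paper in spelling out how Condition \ref{zeta_condition} (applied to $x^2 \in \Ac$) furnishes the zeta hypothesis of Theorem \ref{main_tauberian_theorem}, and in noting that $(1+D^2)^{-p/4}\in\Lc_{2,\infty}$ is what drives the $\Lc_{1,\infty}$-convergence.
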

    \begin{proof}
        Initially we assume that $a = x^{2p},$ where $0 \leq x \in \Ac.$ For this case, we will apply Theorem \ref{tauberian_corollary} to
        $A = x^2$ and $B = (1+D^2)^{-\frac12}.$ Part \eqref{ccond1} of Condition \ref{compact conditions for analyticity} follows
        from the assumption that $(\Ac,H,D)$ is $\Lc_{p,\infty}$-summable. By the definition of $QC^1,$ $[(1+D^2)^{\frac12},x]$ is bounded.
        Hence,
        \[
            [B,A^{\frac12}] = [(1+D^2)^{-\frac12},x] = -(1+D^2)^{-\frac12}[(1+D^2)^{\frac12},x](1+D^2)^{-\frac12}.
        \]
        This belongs to $\Lc_{\frac{p}{2},\infty},$ by the H\"older inequality. This verifies part \eqref{ccond2} of Condition \ref{compact conditions for analyticity}.
        
        Hence, by Theorem \ref{tauberian_corollary} there exists the limit
        \[
            \lim_{t\to\infty} t\mu(t,B^{\frac{p}{2}}A^pB^{\frac{p}{2}}) = \lim_{t\to\infty} t\mu(t,(1+D^2)^{-\frac{p}{4}}x^{2p}(1+D^2)^{-\frac{p}{4}}) = \frac{1}{p}\lim_{z\downarrow p}(z-p)\Tr(x^{2z}(1+D^2)^{-\frac{z}{2}}.
        \]        
        We now prove that the limit on the left hand side above still exists for all $0\leq x \in \overline{\Ac}.$ Indeed, from Remark \ref{positive_approximation_remark}, there exists a sequence $\{x_k\}_{k\geq 0}$
        in $\Ac$ such that every $x_k$ is positive and $x_k\to x.$ It follows that $x_k^{2p}\to x^{2p}$ in the operator norm, and hence that
        \[
            (1+D^2)^{-\frac{p}{4}}x_k^{2p}(1+D^2)^{-\frac{p}{4}}\to (1+D^2)^{-\frac{p}{4}}x^{2p}(1+D^2)^{-\frac{p}{4}}
        \]
        in $\Lc_{1,\infty}.$ It now follows from Lemma \ref{bs_perturbation_lemma_sequential} with $X_k = (1+D^2)^{-\frac{p}{4}}x_k^{2p}(1+D^2)^{-\frac{p}{4}}$ that there exists the limit
        \[
            \lim_{t\to\infty} t\mu(t,(1+D^2)^{-\frac{p}{4}}x^{2p}(1+D^2)^{-\frac{p}{4}}).
        \]
        Taking $x = a^{\frac{1}{2p}} \in \overline{\Ac}$ completes the proof.
    
    \end{proof}
%
%
    
    In order to prove Theorem \ref{secondary_spectral_triple_theorem}, we will use the following lemma, which is proved in \cite[Lemma 5.1]{SZ-asymptotics}.
    \begin{lemma}\label{SZ_lemma}
        If $B\in \Lc_{2,\infty}$ and $A \in \Lc_{\infty}$ are self-adjoint and such that $[A,B] \in (\Lc_{2,\infty})_0,$ then
        \[
            (BAB)_+-BA_+B \in (\Lc_{1,\infty})_0.
        \]
    \end{lemma}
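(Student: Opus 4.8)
The plan is to reduce the statement to a claim about two \emph{positive} operators with small product, and then to invoke an operator-H\"older (Birman--Koplienko--Solomyak-type) estimate for the square-root function. Write $A = A_+ - A_-$ and set $R := BA_+B$, $S := BA_-B$. Both are positive, and since $B \in \Lc_{2,\infty}$ while $A_\pm$ are bounded, the H\"older inequality \eqref{weird_holder} gives $R,S \in \Lc_{1,\infty}$. As $BAB = R-S$, the assertion to be proved is
\[
    (R-S)_+ - R \in (\Lc_{1,\infty})_0 .
\]

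First I would show that $RS$ lies in a small ideal. Since $A_+A_- = 0$, we have $A_+B^2A_- = A_+\bigl(A_-B^2 + [B^2,A_-]\bigr) = A_+[B^2,A_-]$, hence $RS = BA_+B^2A_-B = (BA_+)\,[B^2,A_-]\,B$. The function $t \mapsto t_-$ is Lipschitz (indeed $1$-Lipschitz), so by the operator-Lipschitz estimate \eqref{ps_acta_inequality} together with its $(\Lc_{q,\infty})_0$-variant, $[A_-,B] \in (\Lc_{2,\infty})_0$; consequently $[B^2,A_-] = -[A_-,B]B - B[A_-,B] \in (\Lc_{1,\infty})_0$ by H\"older. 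Multiplying on the left by $BA_+ \in \Lc_{2,\infty}$ and on the right by $B \in \Lc_{2,\infty}$ and applying H\"older once more yields $RS \in (\Lc_{1/2,\infty})_0$, and therefore also $SR = (RS)^* \in (\Lc_{1/2,\infty})_0$.

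Now for the analytic heart. From $X_+ = \tfrac12(X+|X|)$ applied to $X = R-S$ one obtains the algebraic identity
\[
    (R-S)_+ - R = \tfrac12\bigl(\,|R-S| - (R+S)\,\bigr).
\]
Setting $P := R+S \ge 0$ and $Q := |R-S| \ge 0$, both in $\Lc_{1,\infty}$, we have
\[
    P^2 - Q^2 = (R+S)^2 - (R-S)^2 = 2(RS+SR) \in (\Lc_{1/2,\infty})_0 ,
\]
so it suffices to prove: if $P,Q \ge 0$ lie in $\Lc_{1,\infty}$ and $P^2-Q^2 \in (\Lc_{1/2,\infty})_0$, then $P-Q \in (\Lc_{1,\infty})_0$. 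I would derive this from the operator-H\"older behaviour of $t \mapsto t^{1/2}$, in the form of a pointwise singular-value bound $\mu(2n, X^{1/2}-Y^{1/2}) \le c\,\mu(n, X-Y)^{1/2}$ for positive compact $X,Y$ (a Birman--Koplienko--Solomyak-type estimate; cf.\ the operator-Lipschitz/H\"older machinery used elsewhere in the paper and in \cite{SZ-asterisque}). Taking $X = P^2$, $Y = Q^2$ and using $\mu(n, P^2-Q^2) = o(n^{-2})$ gives $\mu(2n, P-Q) = o(n^{-1})$, hence $\mu(m,P-Q) = o(m^{-1})$ for all $m$, i.e.\ $P-Q \in (\Lc_{1,\infty})_0$.

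I expect this last step to be the main obstacle. The reduction to a pair of positive operators with small product is routine once \eqref{ps_acta_inequality} is available, but passing from ``$P^2-Q^2$ small'' to ``$P-Q$ small'' sits exactly at the endpoint ideal $\Lc_{1,\infty}$, where weak majorization of singular values is not by itself sufficient; a genuine pointwise (equivalently, double-operator-integral) estimate for the square-root function is needed. An alternative way to organise this step is to note $(R-S)_+ - R = (R-S)_- - S$ and to expand the left-hand side in the spectral projections $\chi_{(-\infty,0)}(BAB)$ and $\chi_{[0,\infty)}(BAB)$ of $BAB=R-S$; this leads to the same analytic input but can make the role of the smallness of $RS$ more transparent.
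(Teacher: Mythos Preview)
The paper does not prove this lemma at all; it is quoted verbatim from \cite[Lemma~5.1]{SZ-asymptotics}. So there is no in-paper argument to compare against, and your proposal is effectively an attempt to supply a self-contained proof.

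Your reduction is clean and correct through the penultimate step: writing $R=BA_+B$, $S=BA_-B$, obtaining $RS\in(\Lc_{1/2,\infty})_0$ from $A_+A_-=0$ together with \eqref{ps_acta_inequality}, and using the identity $(R-S)_+-R=\tfrac12\bigl(|R-S|-(R+S)\bigr)$ to reduce to showing $P-Q\in(\Lc_{1,\infty})_0$ where $P=R+S$, $Q=|R-S|$ satisfy $P^2-Q^2=2(RS+SR)\in(\Lc_{1/2,\infty})_0$. All of this is fine.

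The gap is exactly where you suspect, and it is real: the pointwise square-root estimate
\[
    \mu(2n, X^{1/2}-Y^{1/2}) \le c\,\mu(n, X-Y)^{1/2}
\]
that you invoke is \emph{false} for general positive compact $X,Y$. A rank-one perturbation already breaks it: take $Y=\diag(1,4,9)$ and $X=Y+\varepsilon\,vv^{*}$ with $v=(1,1,1)^{\top}$. Then $X-Y$ has rank one, so $\mu(1,X-Y)=0$, and your inequality at $n=1$ would force $\mathrm{rank}(X^{1/2}-Y^{1/2})\le 2$. But first-order perturbation theory gives $X^{1/2}-Y^{1/2}\approx \varepsilon\,\bigl(\tfrac{1}{\sqrt{\lambda_i}+\sqrt{\lambda_j}}\bigr)_{i,j}$, a Cauchy matrix, hence of full rank $3$. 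The classical Birman--Koplienko--Solomyak/Ando results give only \emph{weak majorisation} of $X^{1/2}-Y^{1/2}$ by $|X-Y|^{1/2}$, and as you yourself note, $(\Lc_{1,\infty})_0$ is not closed under weak majorisation, so that route is blocked as stated.

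Whether the implication ``$P,Q\ge 0$ in $\Lc_{1,\infty}$ and $P^2-Q^2\in(\Lc_{1/2,\infty})_0$ $\Rightarrow$ $P-Q\in(\Lc_{1,\infty})_0$'' can be rescued using the extra a~priori bound $P,Q\in\Lc_{1,\infty}$ is a separate question; it may well be true, but it does not follow from the pointwise BKS estimate you wrote down, and needs a genuinely different argument (for instance, working with the integral representation $P-Q=\pi^{-1}\int_0^\infty \lambda^{1/2}(P^2+\lambda)^{-1}(P^2-Q^2)(Q^2+\lambda)^{-1}\,d\lambda$ and exploiting the decay of $P,Q$ to control the small-$\lambda$ region, or else using the specific structure $P=R+S$, $Q=|R-S|$ with $RS$ small more directly rather than passing through squares). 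As it stands, the last step is not justified.
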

    
    We will apply this with $A = a\in \overline{\Ac}$ and $B = (1+D^2)^{-\frac{p}{4}}.$ To verify the assumptions of the lemma, we require the following.    
    \begin{lemma}\label{section 5 cts lemma}
        Let $(\Ac,H,D)$ be an $\Lc_{p,\infty}$-summable $QC^1$ spectral triple. For all $a \in \overline{\Ac}$ we have
        \[
            [(1+D^2)^{-\frac{1}{2}},a]\in (\Lc_{p,\infty})_0.
        \]
    \end{lemma}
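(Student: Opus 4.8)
The plan is to first establish the statement for $a \in \Ac$ --- a mild sharpening of a computation already appearing in the proof of Proposition~\ref{abstract_cif} --- and then to bootstrap to all of $\overline{\Ac}$ by a density argument exploiting that $(\Lc_{p,\infty})_0$ is closed in the $\Lc_{p,\infty}$-quasinorm.

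For $a \in \Ac$ I would start from the bounded-operator identity
\[
    [(1+D^2)^{-\frac12},a] = -(1+D^2)^{-\frac12}\,\delta(a)\,(1+D^2)^{-\frac12},
\]
valid because $\delta(a) = [(1+D^2)^{\frac12},a]$ has bounded extension by the $QC^1$ hypothesis (the identity is first checked on $\mathrm{dom}(D)$ and then extends by density, exactly as in the proof of Proposition~\ref{abstract_cif}). Since the triple is $\Lc_{p,\infty}$-summable we have $(1+D^2)^{-\frac12}\in\Lc_{p,\infty}$, so the H\"older inequality for these ideals places the right-hand side in $\Lc_{p,\infty}\cdot\Lc_{\infty}\cdot\Lc_{p,\infty}\subseteq\Lc_{\frac p2,\infty}$. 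The last ingredient is the elementary inclusion $\Lc_{\frac p2,\infty}\subseteq(\Lc_{p,\infty})_0$: any $T\in\Lc_{\frac p2,\infty}$ satisfies $\mu(t,T)=O(t^{-2/p})$, whence $t^{1/p}\mu(t,T)=O(t^{-1/p})\to 0$. This proves the assertion for $a\in\Ac$, the stronger-than-needed conclusion coming essentially for free from the gap between $p/2$ and $p$.

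To pass to $a\in\overline{\Ac}$, I would choose $a_n\in\Ac$ with $\|a_n-a\|\to 0$ (possible by definition of $\overline{\Ac}$) and use additivity of the commutator in its second argument together with the estimate $\|[(1+D^2)^{-\frac12},b]\|_{p,\infty}\leq c_p\|(1+D^2)^{-\frac12}\|_{p,\infty}\|b\|$, applied to $b=a_n-a$, to conclude that $[(1+D^2)^{-\frac12},a_n]\to[(1+D^2)^{-\frac12},a]$ in $\Lc_{p,\infty}$. Each term of this sequence lies in $(\Lc_{p,\infty})_0$ by the previous step, and $(\Lc_{p,\infty})_0$ is by definition the $\|\cdot\|_{p,\infty}$-closure of the finite rank operators, hence closed; therefore the limit $[(1+D^2)^{-\frac12},a]$ lies in $(\Lc_{p,\infty})_0$ as well.

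I do not expect a genuine obstacle here. The only points requiring attention are the (routine) domain bookkeeping in the commutator identity of the second paragraph, standard for $QC^1$ spectral triples, and the harmless quasi-norm constant $c_p$ in the density step; the substance of the lemma is just the interplay of $QC^1$-summability with the inclusion $\Lc_{p/2,\infty}\subseteq(\Lc_{p,\infty})_0$ and the closedness of $(\Lc_{p,\infty})_0$.
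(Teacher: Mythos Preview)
Your proposal is correct and follows essentially the same approach as the paper: first establish $[(1+D^2)^{-1/2},a]\in\Lc_{p/2,\infty}\subset(\Lc_{p,\infty})_0$ for $a\in\Ac$ via the identity $[(1+D^2)^{-1/2},a]=-(1+D^2)^{-1/2}\delta(a)(1+D^2)^{-1/2}$ and H\"older, then extend to $\overline{\Ac}$ by the operator-norm-to-$\Lc_{p,\infty}$ continuity of $a\mapsto[(1+D^2)^{-1/2},a]$ together with closedness of $(\Lc_{p,\infty})_0$. The only cosmetic difference is that the paper phrases the second step as continuity of a linear map while you phrase it via an explicit approximating sequence, but the underlying estimate and logic are identical.
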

    \begin{proof} 
        Since $(\Ac,H,D)$ is $QC^1$, for all $a \in \Ac$ the operator $\delta(a) = [(1+D^2)^{\frac12},a]$ is bounded on $H.$ Thus,
        when $a \in \Ac$ we have
        \[
            [(1+D^2)^{-\frac{1}{2}},a] = -(1+D^2)^{-\frac{1}{2}}\delta(a)(1+D^2)^{-\frac{1}{2}} \in \Lc_{p,\infty}\cdot\Lc_{\infty}\cdot\Lc_{p,\infty}.
        \]
        This is contained in $\Lc_{\frac{p}{2},\infty}\subset (\Lc_{p,\infty})_0$, by H\"older's inequality. This proves the result for $a\in \Ac.$ By the quasi-triangle inequality
        for $\Lc_{p,\infty},$ we have
        \[
            \|[(1+D^2)^{-\frac{1}{2}},a]\|_{p,\infty} \leq c_p\|(1+D^2)^{-\frac{1}{2}}\|_{p,\infty}\|a\|.
        \]
        Hence the linear mapping
        \[
            \Ac \ni a \mapsto [(1+D^2)^{-\frac{1}{2}},a] \in (\Lc_{p,\infty})_0
        \]
        is continuous from the operator norm on $\Ac$ to $\Lc_{p,\infty}.$ Thus, 
        for $a \in \overline{\Ac}$ the commutator $[(1+D^2)^{-\frac{1}{2}},a]$ belongs
        to the closure of $\Lc_{\frac{p}{2},\infty}$ in the $\Lc_{p,\infty}$-quasinorm, which is $(\Lc_{p,\infty})_0.$
    \end{proof}
    
%
%
    
    \begin{proof}[Proof of Theorem \ref{secondary_spectral_triple_theorem}]
        We prove the result involving the positive part, the argument for the negative parts is identical. Let $a \in \overline{A}.$ Since the function $t\mapsto t_+$ is continuous, we have $a_+\in \overline{\Ac}.$ Applying Theorem \ref{main_spectral_triple_theorem} gives the existence of the limit
        \[
            \lim_{t\to\infty} t\mu(t,(1+D^2)^{-\frac{p}{4}}a_+(1+D^2)^{-\frac{p}{4}}).
        \]  
        By Lemma \ref{section 5 cts lemma}, we have
        \[
            [(1+D^2)^{-\frac{1}{2}},a] \in (\Lc_{p,\infty})_0.
        \]
        Applying Lemma \ref{HSZ_modified_inequality}, it follows that
        \[
            [(1+D^2)^{-\frac{p}{4}},a] \in (\Lc_{2,\infty})_0.
        \]
        This verifies the assumption of Lemma \ref{SZ_lemma} with $B = (1+D^2)^{-\frac{p}{4}}$ and $A = a.$
        Lemma \ref{SZ_lemma} for this case implies
        \[
            \left((1+D^2)^{-\frac{p}{4}}a(1+D^2)^{-\frac{p}{4}}\right)_+-(1+D^2)^{-\frac{p}{4}}a_+(1+D^2)^{-\frac{p}{4}} \in (\Lc_{1,\infty})_0.
        \]
        Hence, Corollary \ref{simplified_bs_perturbation_lemma} yields 
        \[
            \lim_{t\to\infty} t\mu(t,\left((1+D^2)^{-\frac{p}{4}}a(1+D^2)^{-\frac{p}{4}}\right)_+) = \lim_{t\to\infty} t\mu(t,(1+D^2)^{-\frac{p}{4}}a_+(1+D^2)^{-\frac{p}{4}}).
        \]
    \end{proof}

\subsection{Noncommutative tori}
We denote $\Circ$ for the unit circle, thought of as the quotient space
\[
    \Circ = \Rl/(2\pi \Itgr).
\]
Let $d\geq 2$ and let $\theta$ be an antisymmetric real $d\times d$ matrix. The \emph{noncommutative torus} $\Circ^d_\theta$ (otherwise known as a quantum torus)
is a heavily studied noncommutative space in the sense of Connes. The $C^*$-algebra $C(\Circ^d_\theta)$ may be described as the universal $C^*$-algebra generated by $d$ unitary generators
$\{U_j\}_{j=1}^d$ obeying the relations
\[
    U_jU_k = e^{2\pi i \theta_{j,k}}U_kU_j,\quad 1\leq j\leq d.
\]
Noncommutative tori were considered by Rieffel as an example in the theory of deformation quantization \cite{Rieffel-1981} and even earlier their presence can be seen in the work of Effros and Hahn in the 1960s \cite{Effros-Hahn-memoirs-1967}. These algebras later appeared prominently as an example of a $C^*$-algebra associated to a foliation by Connes \cite[Chapter 2, Section 9.$\beta$]{Connes1994}. 

Harmonic analysis on $\Circ^d_\theta$ has been investigated by many authors. In particular, pseudodifferential operator theory has been developed in some detail, beginning with the early
work of Baaj \cite{Baaj1988} and Connes \cite{Connes1980}, and has now reached a state of maturity, see \cite{HLP2019a,HLP2019b,LeePonge2020,HaPonge2020,PongeCTT2020,PongeRes2020,XX2018,Tao2018,LJP2016,MSX2019,MSZ2018}. For the theory of function spaces on $\Circ^d_\theta$, we refer the reader to \cite{XXY2018,Spera1992,Tao2018}. The noncommutative geometry of noncommutative tori was developed by Connes \cite{Connes1980}, see also the exposition \cite[Chapter 12]{green-book}.
There have been some references to these spaces in mathematical physics \cite{Bellissard-original,Bellissard-van-Elst-Schulz-Baldes,Connes-Douglas-Schwarz-matrix-theory-1998}, and more recently there has been significant interest in geometric aspects of quantum tori \cite{CM2014,CT2011}.

Equivalently, we may define $C(\Circ^d_\theta)$ in terms of its representation
on a specific Hilbert space. 
We define $C(\Circ^d_\theta)$ as the $C^*$-subalgebra,
\[
    C(\Circ^d_\theta)\subset \Lc_{\infty}(L_2(\Circ^d))
\]
generated by the unitary operators
\begin{equation}\label{nc_torus_generator_def}
    U_j\xi(t) = e^{it_j}\xi(t+\pi \theta e_j),\quad j=1,\ldots,d,\; t \in \mathbb{T}^d.
\end{equation}
Here, $e_j$ is the $j$th standard basis vector of $\Rl^d$ and $\theta e_j$ is the $j$th column of the matrix $\theta.$ Observe that when $\theta=0$ this reduces to the description
of $C(\Circ^d)$ as being the $C^*$-subalgebra of pointwise multipliers of $L_2(\Circ^d)$ generated by the trigonometric basis functions $t\mapsto \exp(it_j).$

A tracial state on $C(\Circ^d_\theta)$ may be defined in terms of its representation
on $L_2(\Circ^d)$ by
\[
    \tau(x) := \frac{1}{(2\pi)^d}\int_{\Circ^d} (x1)(t)\,dt,\quad x \in C(\Circ^d_\theta)
\]
Here, $x1$ is the action of $x \in C(\Circ^d_\theta)$ on the constant function $1\in L_2(\Circ^d).$ The GNS representation space for $C(\Circ^d_\theta)$ is denoted $L_2(\Circ^d_\theta)$,
and the weak closure (or double commutant) of $C(\Circ^d_\theta)$ in its GNS representation is a finite von Neumann algebra $L_{\infty}(\Circ^d_\theta).$

We adopt the shorthand notation
\[
    U^n := U_1^{n_1}U_2^{n_2}\cdots U_d^{n_d},\quad n = (n_1,\ldots,n_d)\in \mathbb{Z}^d.
\]
The set $\{U^n\}_{n\in \Itgr^d}$ is called the trigonometric basis for $L_2(\Circ^d_\theta)$, and forms a complete orthonormal system. 
The coefficients $\{\widehat{x}(n)\}_{n\in \Itgr^d}$ of $x\in L_2(\Circ^d_\theta)$ in the expansion
\[
    x = \sum_{n\in \Itgr^d} \widehat{x}(n)U^n
\]
are called the Fourier coefficients of $x.$

A strongly continuous action $\alpha$ the group $\Circ^d$ on $L_{\infty}(\Circ^d_\theta)$ by isometries may be defined such that
\[
    \alpha_t(U_j) = e^{it_j}U_j,\quad j=1,\ldots,d.
\]
An element $x \in C(\Circ^d_\theta)$ is called \emph{smooth} if the function
\[
    t\mapsto \alpha_t(x)
\]
is a smooth $C(\Circ^d_\theta)$-valued function. Equivalently, the sequence $\{\widehat{x}(n)\}_{n\in \Itgr^d}$ of Fourier coefficients of $x$
has rapid decay. Denote the set of smooth elements as $C^{\infty}(\Circ^d_\theta).$

The partial derivations $\partial_j,$ $j=1,\ldots,d$ are defined on the trigonometric basis by
\[
    \partial_j(U^n) = n_jU^n,\quad n\in \Itgr^d.
\]
For $\alpha \in \Ntrl^d,$ we denote $\partial^{\alpha}$ for $\partial_1^{\alpha_1}\cdots\partial_d^{\alpha_d}.$ The Laplace operator $\Delta$ is defined as
\[
    \Delta = -\sum_{j=1}^d \partial_j^2.
\]
On a trigonometric basis element, $\Delta$ acts as $\Delta U^n = -|n|^2U^n$, where $|n| = (n_1^2+\cdots+n_d^2)^{\frac{1}{2}}.$ 

For $s \in \Rl$, the operator $(1-\Delta)^{\frac{s}{2}}$ may be defined on the trigonometric basis
\[
    (1-\Delta)^{\frac{s}{2}}U^n = (1+|n|^2)^{\frac{s}{2}}U^n.
\]
For $s \leq 0$ this is a bounded linear operator on $L_2(\Circ^d_\theta)$ and the analogy of the Bessel potential for $\Circ^d_\theta.$

Given $x \in L_{\infty}(\Circ^d_\theta)$, we denote by $\rho(x)$ the action of $x$ on $L_2(\Circ^d_\theta)$ by left-multiplication.
That is,
\[
    \rho(x)\xi = x\xi,\quad x \in L_{\infty}(\Circ^d_\theta),\;\xi \in L_2(\Circ^d_\theta).
\]

In order to apply Theorems \ref{main_spectral_triple_theorem} and \ref{scwl_spectral_triple_thm} to $\Circ^d_\theta,$ we use the following facts.
\begin{lemma}\label{facts_about_nctori}
    Let $x\in C^\infty(\Circ^d_\theta),$ and $-\infty < \alpha < 1.$
    \begin{enumerate}[{\rm (a)}]
        \item{} For all $\beta > 0$, we have $(1-\Delta)^{-\frac{\beta}{2}} \in \Lc_{\frac{d}{\beta},\infty}.$
        \item{} For all $-\infty<\alpha<1$, we have $[\rho(x),(1-\Delta)^{\frac{\alpha}{2}}] \in \Lc_{\frac{d}{1-\alpha},\infty}.$
    \end{enumerate}
\end{lemma}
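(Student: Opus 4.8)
\textbf{Plan for the proof of Lemma \ref{facts_about_nctori}.}

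The strategy is to reduce both assertions to known facts about the classical torus $\Circ^d$ (i.e.\ the case $\theta = 0$) by exploiting the fact that the trigonometric basis $\{U^n\}_{n\in\Itgr^d}$ diagonalizes $(1-\Delta)^{s/2}$ exactly as the exponentials $\{e^{in\cdot t}\}$ do on $L_2(\Circ^d)$. For part (a), the operator $(1-\Delta)^{-\beta/2}$ acts on the orthonormal basis $\{U^n\}$ by multiplication by $(1+|n|^2)^{-\beta/2}$, so its singular values are precisely the numbers $(1+|n|^2)^{-\beta/2}$ arranged in decreasing order — exactly the same singular value sequence as the classical Bessel potential $(1-\Delta)^{-\beta/2}$ on $L_2(\Circ^d)$. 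Since $\#\{n\in\Itgr^d : |n|\leq R\} = c_d R^d + O(R^{d-1})$, one has $\mu(k, (1-\Delta)^{-\beta/2}) \asymp k^{-\beta/d}$, which is precisely the statement that $(1-\Delta)^{-\beta/2} \in \Lc_{d/\beta,\infty}$. I would cite the standard Weyl-type count for lattice points, or simply the corresponding classical fact about the torus, since the singular value sequences coincide.

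For part (b), I would first treat the case $0 \le \alpha < 1$ directly and then handle $\alpha < 0$ separately. When $0 \le \alpha < 1$, the key is that $x \in C^\infty(\Circ^d_\theta)$ has rapidly decaying Fourier coefficients, so $\rho(x) = \sum_m \widehat{x}(m)\rho(U^m)$ converges rapidly and it suffices by the quasi-triangle inequality to bound $\|[\rho(U^m),(1-\Delta)^{\alpha/2}]\|_{d/(1-\alpha),\infty}$ with at most polynomial growth in $|m|$. Now $\rho(U^m)U^n = e^{i\phi(m,n)}U^{m+n}$ for an appropriate phase $\phi$ coming from the commutation relations, so in the basis $\{U^n\}$ the operator $\rho(U^m)$ is (up to the unimodular phases, which do not affect singular values of the commutator after factoring) the shift $n \mapsto n+m$. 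Hence $[\rho(U^m),(1-\Delta)^{\alpha/2}]$ acts by $U^n \mapsto e^{i\phi(m,n)}\big((1+|n+m|^2)^{\alpha/2} - (1+|n|^2)^{\alpha/2}\big)U^{m+n}$, and since $\alpha < 1$ the mean value theorem gives
\[
    \big|(1+|n+m|^2)^{\alpha/2} - (1+|n|^2)^{\alpha/2}\big| \le C_\alpha (1+|m|)(1+|n|)^{\alpha-1}.
\]
Thus the commutator is, up to a diagonal factor $O((1+|m|))$, majorized by an operator whose singular values decay like those of $(1-\Delta)^{(\alpha-1)/2}$, which lies in $\Lc_{d/(1-\alpha),\infty}$ by part (a). This yields the bound with polynomial dependence on $|m|$, and summing over $m$ against the rapidly decaying $\widehat{x}(m)$ gives the result for $x \in C^\infty(\Circ^d_\theta)$.

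For $\alpha < 0$, I would reduce to the already-established range by writing the commutator identity
\[
    [\rho(x),(1-\Delta)^{\alpha/2}] = -(1-\Delta)^{\alpha/2}[\rho(x),(1-\Delta)^{-\alpha/2}](1-\Delta)^{\alpha/2}
\]
when $0 \le -\alpha < 1$, applying the previous case to the inner commutator, and using part (a) together with the Hölder-type inequality to absorb the two outer factors of $(1-\Delta)^{\alpha/2} \in \Lc_{d/|\alpha|,\infty}$; for $\alpha \le -1$ one iterates by factoring $(1-\Delta)^{\alpha/2}$ into a product of fractional powers each with exponent in $(-1,0)$ and applies the Leibniz rule, so that each resulting term lies in the correct ideal by Hölder. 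The main technical obstacle is bookkeeping the phases $\phi(m,n)$ and confirming that they genuinely play no role — this follows because multiplying a diagonalizing basis by unimodular scalars is a unitary conjugation, which leaves singular values unchanged — together with checking that the $\Lc_{p,\infty}$ quasi-norm estimates on the dyadic blocks of the commutator really do combine to the clean exponent $d/(1-\alpha)$; both are routine once the structure above is in place.
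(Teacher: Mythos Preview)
Your proof is correct. The paper itself does not give a self-contained argument for this lemma: for part~(a) it simply points to the explicit spectral decomposition $(1-\Delta)^{-\beta/2}U^n=(1+|n|^2)^{-\beta/2}U^n$, which is exactly your lattice-point count, and for part~(b) it defers entirely to \cite[Corollary~5.5]{MSX2019}. Your direct Fourier-analytic route---expanding $x=\sum_m\widehat{x}(m)U^m$, observing that $[\rho(U^m),(1-\Delta)^{\alpha/2}]$ is a weighted shift in the trigonometric basis with weights $e^{i\phi(m,n)}\big((1+|n|^2)^{\alpha/2}-(1+|n+m|^2)^{\alpha/2}\big)$, bounding those weights via the mean value theorem, and then summing against the rapidly decaying $\widehat{x}(m)$---is essentially the argument underlying that reference, carried out by hand.

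One minor simplification: your separate reduction for $\alpha<0$ (via the identity $[\rho(x),B]=-B[\rho(x),B^{-1}]B$ and Leibniz iteration) is not needed. The mean value estimate
\[
\big|(1+|n+m|^2)^{\alpha/2}-(1+|n|^2)^{\alpha/2}\big|\le C_\alpha(1+|m|)(1+|n|)^{\alpha-1},\qquad |n|\ge 2|m|,
\]
holds for every $\alpha<1$, and the remaining indices $|n|<2|m|$ contribute only a finite-rank piece of rank $O(|m|^d)$ and norm $O(1)$, hence with $\Lc_{d/(1-\alpha),\infty}$ quasi-norm $O(|m|^{1-\alpha})$, still polynomial in $|m|$. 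So the direct computation already covers the full range $-\infty<\alpha<1$ in one stroke.
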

The first assertion of Lemma \ref{facts_about_nctori} is a straightforward consequence of the explicit spectral decomposition $(1-\Delta)^{-\frac{\beta}{2}}U^n = (1+|n|^2)^{-\frac{\beta}{2}}U^n$.
The second assertion requires an argument, which was given in Section 5 of \cite{MSX2019}, we refer the reader to \cite[Corollary 5.5]{MSX2019} there. 

A standard example of a spectral triple for $\Circ^d_\theta$ is constructed as an isospectral deformation of the spin Dirac operator on the torus. Let $N = \lfloor \frac{d}{2}\rfloor,$ and
$H = L_2(\Circ^d_\theta)\otimes \Cplx^N,$ and let $\Ac$ be the subalgebra of $\Lc_\infty(H)$ of left multiplication by $C^\infty(\Circ^d_\theta)$ on the first tensor factor. Select a family $\{\gamma_j\}_{j=1}^d$ of $N\times N$ self-adjoint matrices such that $\gamma_j\gamma_k+\gamma_k\gamma_j=2\delta_{j,k}$ for $1\leq j,k\leq d.$ The standard spin Dirac operator for $\Circ^d_\theta$ is the linear operator
\[
    D = \sum_{j=1}^d \partial_j\otimes\gamma_j.
\]
It is well-known that $(\Ac,H,D)$ is a $QC^1$-spectral triple \cite[Chapter 12]{green-book}.

\begin{lemma}\label{zeta_function_lemma}
    The function
    \[
        F(z) := \Tr((1-\Delta)^{-\frac{z}{2}}),\quad \re(z) > d
    \]
    admits meromorphic continuation with a simple pole at $z=d$ and
    \[
        \res_{z=d} F(z) = \Vol(S^{d-1}).
    \]
\end{lemma}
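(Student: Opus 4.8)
The plan is to observe that this lemma is really a statement about the classical lattice zeta sum, the noncommutativity of $\Circ^d_\theta$ playing no role. Since $(1-\Delta)^{-\frac{z}{2}}U^n = (1+|n|^2)^{-\frac{z}{2}}U^n$ and $\{U^n\}_{n\in\Itgr^d}$ is an orthonormal basis of $L_2(\Circ^d_\theta)$, for $\re(z)>d$ we have the absolutely convergent series
\[
    F(z) = \sum_{n\in\Itgr^d}(1+|n|^2)^{-\frac z2}.
\]
I would compare this with its Euclidean integral analogue $G(z):=\int_{\Rl^d}(1+|x|^2)^{-\frac{z}{2}}\,dx$. In polar coordinates, with the substitution $u=r^2$, this is a Beta integral:
\[
    G(z) = \Vol(S^{d-1})\int_0^\infty\frac{r^{d-1}\,dr}{(1+r^2)^{z/2}} = \frac{\Vol(S^{d-1})}{2}\cdot\frac{\Gamma(\tfrac d2)\,\Gamma(\tfrac{z-d}{2})}{\Gamma(\tfrac z2)},\qquad\re(z)>d.
\]
The right-hand side is meromorphic on $\Cplx$; its rightmost singularity is the simple pole of $\Gamma(\tfrac{z-d}{2})$ at $z=d$, and from $\Gamma(\tfrac{z-d}{2})=\tfrac{2}{z-d}+O(1)$ near $z=d$ we read off $\res_{z=d}G(z)=\Vol(S^{d-1})$.

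Next I would show that the defect $H(z):=F(z)-G(z)$ is holomorphic in a half-plane strictly larger than $\{\re(z)>d\}$. Writing $G(z)=\sum_{n\in\Itgr^d}\int_{Q_n}(1+|x|^2)^{-\frac{z}{2}}\,dx$ with $Q_n:=n+[-\tfrac12,\tfrac12]^d$, and using that $n$ is the centre of $Q_n$, the mean value inequality gives
\[
    \Big|(1+|n|^2)^{-\frac z2}-\int_{Q_n}(1+|x|^2)^{-\frac z2}\,dx\Big| \le \sqrt d\,\sup_{x\in Q_n}\big|\nabla_x(1+|x|^2)^{-\frac z2}\big| = \sqrt d\,|z|\sup_{x\in Q_n}\frac{|x|}{(1+|x|^2)^{\frac{\re(z)}{2}+1}}.
\]
On $Q_n$ one has $|x|\lesssim(1+|n|^2)^{1/2}$ and $(1+|x|^2)^{-\frac{\re(z)}{2}-1}\lesssim(1+|n|^2)^{-\frac{\re(z)}{2}-1}$, with implied constants uniform for $\re(z)$ in a compact set, so the $n$-th term is $\lesssim|z|\,(1+|n|^2)^{-\frac{\re(z)}{2}-\frac12}$. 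The resulting series converges locally uniformly on $\{\re(z)>d-1\}$, hence $H$ is holomorphic there. Therefore $F=G+H$ is meromorphic on $\{\re(z)>d-1\}$ — a neighbourhood of $z=d$, which is more than enough to verify Condition \ref{zeta_condition} — with a single simple pole at $z=d$ and $\res_{z=d}F(z)=\res_{z=d}G(z)=\Vol(S^{d-1})$.

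The only point requiring genuine care is making the comparison estimate uniform in $z$ on compact subsets in order to conclude holomorphy of $H$; the rest is direct computation. One could alternatively obtain the full meromorphic continuation to $\Cplx$ from the Mellin representation $F(z)=\Gamma(\tfrac z2)^{-1}\int_0^\infty t^{z/2-1}e^{-t}\vartheta(t)^d\,dt$ with $\vartheta(t)=\sum_{k\in\Itgr}e^{-k^2t}$, splitting the integral at $t=1$ and using the Jacobi identity $\vartheta(t)=\sqrt{\pi/t}\,\vartheta(\pi^2/t)$ on $(0,1)$; that route recovers the residue in the closed form $\tfrac{2\pi^{d/2}}{\Gamma(d/2)}$, which equals $\Vol(S^{d-1})$ and provides an independent check.
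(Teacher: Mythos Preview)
Your argument is correct and self-contained, whereas the paper's own proof simply observes that the sum $\sum_{n\in\Itgr^d}(1+|n|^2)^{-z/2}$ coincides with the Bessel-potential zeta function of the ordinary torus and then cites the literature (Shubin) for the meromorphic continuation. Your route --- compare the lattice sum with the Euclidean integral, evaluate the latter via the Beta integral, and control the difference termwise on unit cubes --- is the classical Euler--Maclaurin style argument and gives everything needed for the residue computation directly.

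One small remark: your primary estimate only yields holomorphy of $H=F-G$ on $\{\re(z)>d-1\}$, so the meromorphic continuation you actually prove lives on that half-plane, not on all of $\Cplx$ (or on $\{\re(z)>0\}$, which is what the subsequent lemma in the paper invokes). You flag this yourself and note it suffices for Condition~\ref{zeta_condition}; your sketched Mellin/theta alternative does give the full continuation and the closed form $2\pi^{d/2}/\Gamma(d/2)$ for the residue. If you want to match the paper's later use verbatim, either carry out the theta argument in full or iterate the integral-comparison step (second-order Euler--Maclaurin) to push the region of holomorphy further left.
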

\begin{proof}
    The trace $\Tr((1-\Delta)^{-\frac{z}{2}})$ can be expanded in terms of the trigonometric basis as
    \[
        F(z) = \sum_{n\in \Itgr^d} (1+|n|^2)^{-\frac{z}{2}},\quad \re(z)>d.
    \]
    This is precisely the same as the trace of the operator $(1-\Delta)^{-\frac{z}{2}}$ on the ordinary (commutative) $d$-torus.
    The meromorphic continuation in this case is well-known. For a much more general result see e.g. \cite[Chapter 2]{Shubin-psido-2001}.
\end{proof}

\begin{lemma}\label{torus_analyticity_check}
    Let $0\leq a \in C^{\infty}(\Circ^d_\theta),$. The function
    \[
        z\mapsto \Tr(\rho(a)^z(1-\Delta)^{-\frac{z}{2}}),\quad \re(z)>d
    \]
    admits a meromorphic continuation to the half-plane $\re(z)>0$, with only a simple pole at $z=d$
    and corresponding residue
    \[
        \res_{z=d} \Tr(\rho(a)^z(1-\Delta)^{-\frac{z}{2}}) = \Vol(S^{d-1})\tau(a^d).
    \]
\end{lemma}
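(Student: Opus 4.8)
The plan is to reduce the noncommutative zeta function $\Tr(\rho(a)^z(1-\Delta)^{-\frac{z}{2}})$ to a sum over the trigonometric basis and then compare it, term by term, with the scalar zeta function $F(z)=\Tr((1-\Delta)^{-\frac{z}{2}})$ studied in Lemma \ref{zeta_function_lemma}. First I would observe that since $a\geq 0$ and $a\in C^\infty(\Circ^d_\theta)$, the operator $\rho(a)$ is a positive element of the von Neumann algebra $L_\infty(\Circ^d_\theta)$ with smooth symbol, so $\rho(a)^z$ again lies in $L_\infty(\Circ^d_\theta)$ (at least for $\re(z)$ in a suitable range, using that $a$ is bounded below away from $0$ on the support issue is avoided because we only need $a^z$ which is fine for $a\geq 0$ by functional calculus, taking $0^z=0$). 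Writing $b = \rho(a)^z$ and expanding in the trigonometric basis $\{U^m\otimes e_i\}$, the trace becomes
\[
    \Tr(\rho(a)^z(1-\Delta)^{-\frac{z}{2}}) = N\sum_{m\in\Itgr^d}(1+|m|^2)^{-\frac{z}{2}}\,\tau(U^{-m}\rho(a)^z U^m)\Big/\,?
\]
— more precisely, since $(1-\Delta)^{-z/2}$ is diagonal on $U^m$ with eigenvalue $(1+|m|^2)^{-z/2}$, and $\langle U^m, \rho(b)U^m\rangle_{L_2(\Circ^d_\theta)} = \tau(U^{-m} b U^m)= \widehat{b}(0)=\tau(b)$ because $\tau$ is a trace, each diagonal matrix element of $\rho(\rho(a)^z)$ in the trigonometric basis equals $\tau(a^z)$ (the $\Cplx^N$ factor just contributes a factor $N$, but the Dirac operator and $\gamma$-matrices are not actually present here — the statement is purely about $(1-\Delta)$, so there is no $\Cplx^N$ and no factor of $N$). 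Hence
\[
    \Tr(\rho(a)^z(1-\Delta)^{-\frac{z}{2}}) = \tau(a^z)\sum_{m\in\Itgr^d}(1+|m|^2)^{-\frac{z}{2}} = \tau(a^z)\,F(z).
\]

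The main analytic content is then already contained in Lemma \ref{zeta_function_lemma}: $F(z)$ continues meromorphically to $\re(z)>0$ with a simple pole at $z=d$ of residue $\Vol(S^{d-1})$, and it is holomorphic and nonzero elsewhere on $\re(z)>0$ except at that pole. The remaining task is to show $z\mapsto\tau(a^z)$ is holomorphic on a neighbourhood of $\re(z)\geq d$ (indeed on $\re(z)>0$). This follows because $a\in C^\infty(\Circ^d_\theta)$ is a bounded positive operator, so $a^z = e^{z\log a}$ via Borel functional calculus is a norm-holomorphic $L_\infty(\Circ^d_\theta)$-valued function of $z$ on all of $\Cplx$ when $a$ is invertible, and in general on $\re(z)>0$ one writes $a^z$ as a norm-convergent integral/limit (e.g. $a^z=\lim_{\varepsilon\downarrow 0}(a+\varepsilon)^z$ with uniform bounds on compacta), and $\tau$ is norm-continuous; so $\tau(a^z)$ is holomorphic there. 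Multiplying, $\Tr(\rho(a)^z(1-\Delta)^{-z/2}) = \tau(a^z)F(z)$ is meromorphic on $\re(z)>0$ with a simple pole only at $z=d$, and
\[
    \res_{z=d}\Tr(\rho(a)^z(1-\Delta)^{-\tfrac{z}{2}}) = \tau(a^d)\cdot\res_{z=d}F(z) = \Vol(S^{d-1})\,\tau(a^d).
\]

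I expect the only genuine subtlety to be the factorisation $\Tr(\rho(a)^z(1-\Delta)^{-z/2})=\tau(a^z)F(z)$: one must justify that $\rho(a)^z$ commutes with — or at least has the right diagonal elements against — the spectral projections of $\Delta$. The clean way is the trace-property argument above: $\langle U^m,\rho(c)U^m\rangle = \widehat{c}(0) = \tau(c)$ for any $c\in L_2(\Circ^d_\theta)$ with $c=\rho(a)^z$ (valid since $a^z$ is still smooth, having the same kind of rapid-decay symbol because the holomorphic functional calculus preserves $C^\infty(\Circ^d_\theta)$ when $a$ is invertible; for noninvertible $a\geq0$ one approximates). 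Convergence of $\sum_m(1+|m|^2)^{-z/2}$ for $\re(z)>d$ and the trace-class property of $\rho(a)^z(1-\Delta)^{-z/2}$ there are standard and follow from Lemma \ref{facts_about_nctori}(a). Once the factorisation is in hand, everything else is immediate from Lemma \ref{zeta_function_lemma}.
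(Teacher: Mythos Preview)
Your proposal is correct and follows essentially the same route as the paper: expand the trace in the trigonometric basis $\{U^n\}$, use the trace property of $\tau$ to see that every diagonal entry equals $\tau(a^z)$, factor the expression as $\tau(a^z)\,\Tr((1-\Delta)^{-z/2})$, and then invoke Lemma~\ref{zeta_function_lemma} together with the holomorphy of $z\mapsto\tau(a^z)$ on $\re(z)>0$. The paper's argument is more terse (it does not dwell on the smoothness of $a^z$ or the approximation issues you raise), but the substance is the same.
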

\begin{proof}
    Denote $F$ for the function
    \[
        F(z) = \Tr(\rho(a)^z(1-\Delta)^{-\frac{z}{2}}).
    \]
    Expressing the trace in the trigonometric basis $\{U^n\}_{n\in \mathbb{Z}^d}$ we have
    \begin{align*}
        F(z) &= \sum_{n\in\mathbb{Z}^d} \langle \rho(a)^z(1-\Delta)^{-\frac{z}{2}}U^n,U^n\rangle\\
             &= \sum_{n\in \mathbb{Z}^d} \tau((U^n)^*U^na^z)(1+|n|^2)^{-\frac{z}{2}}\\
             &= \sum_{n\in \mathbb{Z}^d} \tau(a^z)(1+|n|^2)^{-\frac{z}{2}}\\
             &= \tau(a^z)\Tr((1-\Delta)^{-\frac{z}{2}}).
    \end{align*}
    Since $a\geq 0,$ the function
    \[
        z\mapsto \tau(a^z)
    \]
    is holomorphic in the half-plane $\{\re(z)> 0\}.$ Applying Lemma \ref{zeta_function_lemma}, it follows that the function
    \[
        \Tr(\rho(a)^z(1-\Delta)^{-\frac{z}{2}})-\frac{\Vol(S^{d-1})\tau(a^d)}{z-d}
    \]
    has analytic continuation to the half-plane $\re(z)>0.$
\end{proof}
Lemma \ref{torus_analyticity_check} completes the verification of Condition \ref{zeta_condition} for the spectral triple $(C^\infty(\Circ^d_\theta)\otimes 1_{\Cplx^N},L_2(\Circ^d_\theta)\otimes \Cplx^{N},D).$
Indeed, for $0\leq a \in C^\infty(\Circ^d_\theta),$ the function
\[
    \Tr(\rho(a)^z(1-\Delta)^{-\frac{z}{2}})-\frac{\Vol(S^{d-1})\tau(a^d)}{z-d},\quad \re(z)>0
\]
is analytic, and hence is continuous on the smaller half-plane $\re(z)\geq d.$

Since the spectral triple $(C^\infty(\Circ^d_\theta)\otimes 1_{\Cplx^N},L_2(\Circ^d_\theta)\otimes \Cplx^{N},D)$ obeys the assumptions of Theorem \ref{main_spectral_triple_theorem} with $p=d,$ it follows immediately from the cited theorem that for all $0\leq a \in C(\Circ^d_\theta)$ the limit
\[
    \lim_{t\to\infty} t\mu(t,(1-\Delta)^{-\frac{d}{4}}\rho(a)(1-\Delta)^{-\frac{d}{4}})
\]
exists. The following theorem refines this statement by providing an explicit formula for the limit in terms of the trace $\tau.$
\begin{theorem}\label{strong_cif_nc_torus}
    For all $0\leq b \in C(\Circ^d_\theta),$ we have
    \[
        \lim_{t\to\infty} t\mu(t,(1-\Delta)^{-\frac{d}{4}}\rho(b)(1-\Delta)^{-\frac{d}{4}}) = \frac{\Vol(S^{d-1})}{d}\tau(b).
    \]
\end{theorem}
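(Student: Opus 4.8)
The plan is to apply Proposition \ref{abstract_cif} (that is, Theorem \ref{main_spectral_triple_theorem}) to the spin Dirac spectral triple $(C^\infty(\Circ^d_\theta)\otimes 1_{\Cplx^N},\,L_2(\Circ^d_\theta)\otimes\Cplx^N,\,D)$ with $p=d$, to evaluate the residue that appears using the Fourier‑basis computation already performed in the proof of Lemma \ref{torus_analyticity_check} together with Lemma \ref{zeta_function_lemma}, and then to transfer the conclusion down to the operator on $L_2(\Circ^d_\theta)$ by accounting for the $N$‑fold spinor multiplicity, using that $1+D^2=(1-\Delta)\otimes 1_{\Cplx^N}$. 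Condition \ref{zeta_condition} for this triple was checked in the discussion following Lemma \ref{torus_analyticity_check}, so Proposition \ref{abstract_cif} applies.

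First I would reduce to the case where $0\le a\in C^\infty(\Circ^d_\theta)$ is invertible. Given an arbitrary $0\le b\in C(\Circ^d_\theta)$, Remark \ref{positive_approximation_remark} supplies positive $a_n\in C^\infty(\Circ^d_\theta)$ with $a_n\to b$ in operator norm; then $a_n+1/n$ is positive, smooth and invertible with $a_n+1/n\to b$. Since $(1-\Delta)^{-\frac d4}\in\Lc_{2,\infty}$ by Lemma \ref{facts_about_nctori}(a), the Hölder inequality shows that $c\mapsto (1-\Delta)^{-\frac d4}\rho(c)(1-\Delta)^{-\frac d4}$ is continuous from the operator norm into $\Lc_{1,\infty}$, so $(1-\Delta)^{-\frac d4}\rho(a_n+1/n)(1-\Delta)^{-\frac d4}\to (1-\Delta)^{-\frac d4}\rho(b)(1-\Delta)^{-\frac d4}$ in $\Lc_{1,\infty}$. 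Granting the asserted formula $\lim_t t\mu(t,(1-\Delta)^{-\frac d4}\rho(a)(1-\Delta)^{-\frac d4})=\frac{\Vol(S^{d-1})}{d}\tau(a)$ for invertible smooth $a$, Lemma \ref{bs_perturbation_lemma_sequential} together with the norm‑continuity of the state $\tau$ ($|\tau(a_n+1/n)-\tau(b)|\le\|a_n+1/n-b\|$) then yields the formula for $b$.

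Now fix invertible $0\le a\in C^\infty(\Circ^d_\theta)$. Its spectrum is a compact subset of $(0,\infty)$, so holomorphic functional calculus gives $a^{\frac1{2d}}\in C^\infty(\Circ^d_\theta)$; put $x=\rho(a^{\frac1{2d}})\otimes 1_{\Cplx^N}$, which lies in the algebra of the triple and satisfies $\rho(a)\otimes 1_{\Cplx^N}=x^{2d}$. Proposition \ref{abstract_cif} then gives
\[
    \lim_{t\to\infty} t\mu\bigl(t,(1+D^2)^{-\frac d4}(\rho(a)\otimes 1)(1+D^2)^{-\frac d4}\bigr)=\frac1d\lim_{z\downarrow d}(z-d)\Tr\bigl(x^{2z}(1+D^2)^{-\frac z2}\bigr).
\]
Using $1+D^2=(1-\Delta)\otimes 1_{\Cplx^N}$ and $x^{2z}=\rho(a^{z/d})\otimes 1_{\Cplx^N}$ (functional calculus for the invertible positive operator $\rho(a^{\frac1{2d}})$), the trace over $L_2(\Circ^d_\theta)\otimes\Cplx^N$ equals $N$ times the trace over $L_2(\Circ^d_\theta)$ of $\rho(a^{z/d})(1-\Delta)^{-\frac z2}$. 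Expanding in the trigonometric basis exactly as in the proof of Lemma \ref{torus_analyticity_check} and using traciality of $\tau$ yields $\Tr(\rho(a^{z/d})(1-\Delta)^{-\frac z2})=\tau(a^{z/d})\,\Tr((1-\Delta)^{-\frac z2})$. By Lemma \ref{zeta_function_lemma}, $\Tr((1-\Delta)^{-\frac z2})$ has a simple pole at $z=d$ with residue $\Vol(S^{d-1})$, while $z\mapsto\tau(a^{z/d})$ is holomorphic near $z=d$ with value $\tau(a)$ there; hence the right‑hand side above equals $\frac{N\Vol(S^{d-1})}{d}\tau(a)$.

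Finally, $(1+D^2)^{-\frac d4}(\rho(a)\otimes 1)(1+D^2)^{-\frac d4}=\bigl((1-\Delta)^{-\frac d4}\rho(a)(1-\Delta)^{-\frac d4}\bigr)\otimes 1_{\Cplx^N}$, whose singular value function is the $N$‑fold ``staircase'' of that of $(1-\Delta)^{-\frac d4}\rho(a)(1-\Delta)^{-\frac d4}$, so that $\lim_t t\mu(t,T\otimes 1_{\Cplx^N})=N\lim_t t\mu(t,T)$ whenever the latter limit exists. Dividing the previous display by $N$ therefore gives $\lim_{t\to\infty}t\mu(t,(1-\Delta)^{-\frac d4}\rho(a)(1-\Delta)^{-\frac d4})=\frac{\Vol(S^{d-1})}{d}\tau(a)$, which completes the invertible smooth case and, with the first reduction, the theorem. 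The points that need care are the cancellation of the spinor multiplicity $N$ between the zeta‑function computation and the transfer step, and the reduction to invertible $a$ — this is what allows the complex powers $x^{2z}$ and the root $a^{1/(2d)}$ to be handled by holomorphic functional calculus inside the smooth subalgebra, so that Proposition \ref{abstract_cif} is genuinely applicable.
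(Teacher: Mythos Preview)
Your proof is correct and follows essentially the same route as the paper's: apply Proposition \ref{abstract_cif}, evaluate the residue via Lemmas \ref{torus_analyticity_check} and \ref{zeta_function_lemma}, and pass to general continuous $b$ by approximation using Lemma \ref{bs_perturbation_lemma_sequential}. The only cosmetic differences are that the paper applies Proposition \ref{abstract_cif} directly on $L_2(\Circ^d_\theta)$ (so there is no spinor multiplicity $N$ to introduce and then cancel) and, instead of reducing to invertible $a$ so that $a^{1/(2d)}$ stays smooth, it first proves the formula for $\rho(b)^{2d}$ with $b$ continuous and only then substitutes $b\mapsto b^{1/(2d)}$ inside $C(\Circ^d_\theta)$.
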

\begin{proof} 
    Applying Proposition \ref{abstract_cif|}, for all $0\leq a \in C^\infty(\Circ^d_\theta)$ we have
    \[
        \lim_{t\to\infty} t\mu(t,(1-\Delta)^{-\frac{d}{4}}\rho(a)^{2d}(1-\Delta)^{-\frac{d}{4}}) = \frac{1}{d}\lim_{z\downarrow d} (z-d)\Tr(\rho(a)^{2z}(1-\Delta)^{-\frac{z}{2}}).
    \]
    The above limit as $z\downarrow d$ has already been computed in Lemma \ref{torus_analyticity_check}, and we obtain
    \[
        \lim_{t\to\infty} t\mu(t,(1-\Delta)^{-\frac{d}{4}}\rho(a)^{2d}(1-\Delta)^{-\frac{d}{4}}) = \frac{\Vol(S^{d-1})}{d}\tau(a^{2d}),\quad a \in C^\infty(\Circ^d_\theta).
    \]
    Given $b\in C(\Circ^d_\theta),$ there exists a sequence $\{b_n\}_{n\geq 0}$ in $C^\infty(\Circ^d_\theta)$ such that $b_n\to b$ in the uniform norm.
    For every $n\geq 0$ we have
    \[
        \lim_{t\to\infty} t\mu(t,(1-\Delta)^{-\frac{d}{4}}\rho(b_n)^{2d}(1-\Delta)^{-\frac{d}{4}}) = \frac{\Vol(S^{d-1})}{d}\tau(b_n^{2d}).
    \]
    It follows from Lemma \ref{bs_perturbation_lemma_sequential} that
    \[
        \lim_{t\to\infty} t\mu(t,(1-\Delta)^{-\frac{d}{4}}\rho(b)^{2d}(1-\Delta)^{-\frac{d}{4}}) = \frac{\Vol(S^{d-1})}{d}\tau(b^{2d}).
    \]
    Replacing $b$ with $b^{\frac{1}{2d}}$ completes the proof.    
\end{proof}

\section{Application to semiclassical Weyl laws}
We will apply the following form of the Birman--Schwinger principle, adapted from \cite{MP2021}.
Recall that the \emph{essential spectrum} of a closed linear operator $T$ on a Hilbert space is that part of the spectrum which does not consist of isolated eigenvalues
of finite multiplicity \cite[pp.~236]{Reed-Simon-I}, \cite[Definition 8.3]{Schmudgen2012}. If $T:\mathrm{dom}(T)\to H$ is a self-adjoint positive unbounded linear operator with compact resolvent, i.e. $(1+T)^{-1}\in \Kc(H)$, then 
$T$ has no essential spectrum. If $T$ has compact resolvent and $V$ is a linear operator, then $T+V$ also has compact resolvent and it follows that for all self-adjoint bounded operators $V$, the sum $T+V$ has no essential spectrum. 

Given that $T+V$ has no essential spectrum and is lower bounded, the quantity
\[
    N(\lambda,T+V) = \Tr(\chi_{(-\infty,\lambda)}(T+V)),\quad \lambda \in \Rl
\]
is finite. Equivalently, $N(\lambda,T+V)$ is the number of eigenvalues (counting multiplicity) of the operator $T+V$ which are less than $\lambda \in \Rl.$ The Birman-Schwinger principle asserts
that if $\lambda <0$ then
\[
    N(\lambda,T+V) = \Tr(\chi_{(1,\infty)}(-(T-\lambda)^{-\frac12}V(T-\lambda)^{-\frac12})).
\]
That is, the number of eigenvalues less than $\lambda$ of $T+V$ is equal to the number of eigenvalues exceeding $1$ of the operator $-(T-\lambda)^{-\frac12}V(T-\lambda)^{-\frac12},$ \cite[Lemma 1.4]{BS1989b}, \cite[Lemma 7.1]{Simon2003}, \cite[Theorem 7.9.4]{SimonCourse4}.

The following theorem is not likely to be novel, but we have been unable to find an adequate reference.
\begin{theorem}\label{bs_principle}
        Let $T$ be a self-adjoint positive unbounded linear operator on a Hilbert space $H$ with compact resolvent. Let $V$ be a self-adjoint bounded linear operator.
        For all $q>0,$ we have
        \[  
            \lim_{h\downarrow 0}h^qN(0,hT+V) = \lim_{h\downarrow 0}h^q\Tr(\chi_{(h,\infty)}(-(1+T)^{-\frac12}V(1+T)^{-\frac12}))
        \]  
        if the limit on the right exists.
\end{theorem}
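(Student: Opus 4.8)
The plan is to rewrite $N(0,hT+V)$ via the Birman--Schwinger principle at a convenient energy and then compare the resulting counting function with the one on the right-hand side using two elementary monotonicity properties of eigenvalue counting functions. For a compact self-adjoint operator $X$ and $s>0$ write $n(s,X):=\Tr(\chi_{(s,\infty)}(X))$, a finite nonnegative integer; I shall use that $X\leq Y$ implies $n(s,X)\leq n(s,Y)$, and that $n(s_1+s_2,X+Y)\leq n(s_1,X)+n(s_2,Y)$ for $s_1,s_2>0$ (both are consequences of the min--max principle and Weyl's inequalities for the eigenvalues of a sum). Set $B:=(1+T)^{-\frac12}V(1+T)^{-\frac12}$, which is compact since $(1+T)^{-\frac12}$ is compact and $V$ is bounded, and let $L$ be the assumed right-hand limit, so that $\lim_{h\downarrow0}h^qn(h,-B)=L$.

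First I would write $hT+V=h(1+T)+(V-h)$. The operator $h(1+T)$ is positive, has compact resolvent, and has spectrum contained in $[h,\infty)$, so $0$ lies strictly below its spectrum. Applying the Birman--Schwinger principle with $h(1+T)$ playing the role of $T$, with $V-h$ playing the role of $V$, at energy $\lambda=0$, and then using $(h(1+T))^{-\frac12}=h^{-\frac12}(1+T)^{-\frac12}$ together with the scaling identity $\chi_{(1,\infty)}(X)=\chi_{(h,\infty)}(hX)$, one obtains
\[
    N(0,hT+V)=\Tr\big(\chi_{(h,\infty)}\big(-B+h(1+T)^{-1}\big)\big)=n\big(h,\,-B+h(1+T)^{-1}\big).
\]
It therefore suffices to show that the nonnegative perturbation $h(1+T)^{-1}$ does not affect the limit of $h^qn(h,-B+h(1+T)^{-1})$ as $h\downarrow0$.

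Since $h(1+T)^{-1}\geq0$, monotonicity gives $n(h,-B+h(1+T)^{-1})\geq n(h,-B)$, whence $\liminf_{h\downarrow0}h^qN(0,hT+V)\geq L$. For the opposite inequality, fix $\varepsilon\in(0,1)$ and apply subadditivity with $X=-B$, $Y=h(1+T)^{-1}$, $s_1=(1-\varepsilon)h$, $s_2=\varepsilon h$:
\[
    n\big(h,\,-B+h(1+T)^{-1}\big)\leq n\big((1-\varepsilon)h,\,-B\big)+n\big(\varepsilon,\,(1+T)^{-1}\big).
\]
The last term, which I denote $N_\varepsilon$, is finite and independent of $h$ because $(1+T)^{-1}$ is compact. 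Multiplying by $h^q$ and using $h^qn((1-\varepsilon)h,-B)=(1-\varepsilon)^{-q}\big((1-\varepsilon)h\big)^qn((1-\varepsilon)h,-B)$, the first summand tends to $(1-\varepsilon)^{-q}L$ as $h\downarrow0$ while $h^qN_\varepsilon\to0$; hence $\limsup_{h\downarrow0}h^qN(0,hT+V)\leq(1-\varepsilon)^{-q}L$. Letting $\varepsilon\downarrow0$ gives $\limsup\leq L$, and combined with the lower bound this yields the claim.

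The one delicate point is invoking the Birman--Schwinger principle at energy $\lambda=0$, whereas the form recalled earlier is stated for $\lambda<0$: this is justified because replacing $T$ by $h(1+T)$ moves the bottom of the spectrum to $h>0$, so $0$ remains strictly below the spectrum of the unperturbed operator. Alternatively one may apply the principle at $\lambda=-\delta$ for $0<\delta<h$, with the decomposition arranged so that the resolvent parameter is again $h(1+T)$, and then let $\delta\downarrow0$, using that $N(-\delta,hT+V)\uparrow N(0,hT+V)$ by discreteness of the spectrum and that the corresponding Birman--Schwinger operators increase monotonically in operator norm to the one at $\delta=0$. Apart from this, everything reduces to the two monotonicity properties of $n(s,\cdot)$ noted at the outset, so no genuine obstacle is anticipated.
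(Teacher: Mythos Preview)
Your proof is correct and follows essentially the same route as the paper: the paper derives the identical identity $N(0,hT+V)=\Tr(\chi_{(h,\infty)}(-B+h(1+T)^{-1}))$ (phrased as applying Birman--Schwinger to $hT$ at $\lambda=-h$ rather than to $h(1+T)$ at $\lambda=0$, but these are the same manipulation), and then uses exactly your monotonicity argument for the lower bound and your $\varepsilon$-splitting subadditivity argument for the upper bound. The paper packages this as a pair of limsup/liminf inequalities not assuming existence of the right-hand limit, but the content is identical.
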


Besides using the Birman--Schwinger principle, our argument for Theorem \ref{bs_principle} is an application of the following elementary facts.
First, for all self-adjoint $T$ and $S$ and all $\alpha,\beta\in \Rl$ we have
\begin{equation}\label{subadditivity}
    \Tr(\chi_{(\alpha+\beta,\infty)}(T+S))\leq \Tr(\chi_{(\alpha,\infty)}(T))+\Tr(\chi_{(\beta,\infty)}(S)).
\end{equation}
In particular, if $S\geq 0$ then for all $t \in \Rl$ we have
\begin{equation}\label{monotonicity}
    \Tr(\chi_{(t,\infty)}(T))\leq \Tr(\chi_{(t,\infty)}(T+S)).
\end{equation}

Theorem \ref{bs_principle} is an immediate corollary of the following lemma, 
which is more general in that it does not assume the existence of any limit. 

\begin{lemma}  Let $T$ be a self-adjoint positive unbounded linear operator on a Hilbert space $H$ with compact resolvent. Let $V$ be a self-adjoint bounded linear operator. For all $q>0,$ we have
$$\limsup_{h\to0}h^qN(0,hT+V)\leq \limsup_{h\to0}h^q\Big(\Tr(\chi_{(h,\infty)}(-(1+T)^{-\frac12}V(1+T)^{-\frac12})))\Big),$$
$$\liminf_{h\to0}h^qN(0,hT+V)\geq \liminf_{h\to0}h^q\Big(\Tr(\chi_{(h,\infty)}(-(1+T)^{-\frac12}V(1+T)^{-\frac12})))\Big).$$    
\end{lemma}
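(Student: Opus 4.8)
The plan is to deduce both one-sided estimates from two applications of the Birman--Schwinger principle recalled before Theorem~\ref{bs_principle}, together with the elementary inequality \eqref{subadditivity}. Throughout I would write $K := -(1+T)^{-1/2}V(1+T)^{-1/2}$; since $(1+T)^{-1/2}$ is compact (being the square root of the compact resolvent) and $V$ is bounded, $K$ is a self-adjoint compact operator, so $B_h := \Tr(\chi_{(h,\infty)}(K))$ is finite for each $h>0$ and the right-hand sides of the two displayed inequalities are exactly $h^q B_h$.

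For the $\liminf$ inequality I would first establish the exact identity $N(-h,hT+V) = B_h$. Applying the Birman--Schwinger principle to the positive operator $hT$ (which has compact resolvent since $T$ does), with bounded perturbation $V$ and spectral parameter $\lambda = -h < 0$, gives $N(-h,hT+V) = \Tr(\chi_{(1,\infty)}(-(hT+h)^{-1/2}V(hT+h)^{-1/2})) = \Tr(\chi_{(1,\infty)}(-h^{-1}(1+T)^{-1/2}V(1+T)^{-1/2}))$; rescaling the spectral parameter via $\chi_{(1,\infty)}(h^{-1}X)=\chi_{(h,\infty)}(X)$ identifies this with $B_h$. Since $\chi_{(-\infty,-h)}\le\chi_{(-\infty,0)}$ we get $N(0,hT+V)\ge N(-h,hT+V)=B_h$, so $h^q N(0,hT+V)\ge h^q B_h$ and the $\liminf$ bound is immediate.

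For the $\limsup$ inequality the obstacle --- which I expect to be the only nonroutine point --- is that the Birman--Schwinger principle as stated requires a strictly negative spectral parameter, so it cannot be invoked at $\lambda=0$ directly; I would get around this by shifting the operator. Using that $T+h^{-1}V = h^{-1}(hT+V)$ has the same negative spectral subspace as $hT+V$, and that $\chi_{(-\infty,-1)}(Y-1)=\chi_{(-\infty,0)}(Y)$ for self-adjoint $Y$, one has $N(0,hT+V)=N(-1,\,T+(h^{-1}V-1))$. Applying the Birman--Schwinger principle to $T$ with bounded perturbation $h^{-1}V-1$ and $\lambda=-1<0$, and rescaling the spectral parameter as before, yields the exact formula $N(0,hT+V)=\Tr(\chi_{(h,\infty)}(K+h(1+T)^{-1}))$. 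The error term $h(1+T)^{-1}$ is positive and, crucially, compact, so for each $\delta\in(0,1)$ the count $m(\delta):=\Tr(\chi_{(\delta,\infty)}((1+T)^{-1}))$ is finite and independent of $h$. Then \eqref{subadditivity}, applied with $\alpha=(1-\delta)h$ and $\beta=\delta h$, gives $N(0,hT+V)\le\Tr(\chi_{((1-\delta)h,\infty)}(K))+\Tr(\chi_{(\delta h,\infty)}(h(1+T)^{-1}))=B_{(1-\delta)h}+m(\delta)$.

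It then remains to pass to the limit. Multiplying by $h^q$, the term $h^q m(\delta)\to0$ as $h\downarrow0$, while $h^q B_{(1-\delta)h}=(1-\delta)^{-q}\big((1-\delta)h\big)^q B_{(1-\delta)h}$, and the substitution $h'=(1-\delta)h$ shows $\limsup_{h\to0}\big((1-\delta)h\big)^q B_{(1-\delta)h}=\limsup_{h\to0}h^q B_h$. Hence $\limsup_{h\to0}h^q N(0,hT+V)\le(1-\delta)^{-q}\limsup_{h\to0}h^q B_h$ for every $\delta\in(0,1)$, and letting $\delta\downarrow0$ finishes the $\limsup$ estimate. In short, beyond the Birman--Schwinger principle and \eqref{subadditivity} the only thing one has to notice is that the shift needed to reach $\lambda=0$ generates a positive compact error $h(1+T)^{-1}$, which \eqref{subadditivity} isolates into the finite $h$-independent quantity $m(\delta)$, and which therefore disappears after multiplication by $h^q$.
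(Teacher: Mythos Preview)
Your proof is correct and follows essentially the same route as the paper. Both arguments establish the exact identity $N(0,hT+V)=\Tr(\chi_{(h,\infty)}(K+h(1+T)^{-1}))$ via a shift to a strictly negative spectral parameter (you shift to $N(-1,T+h^{-1}V-1)$, the paper to $N(-h,hT+V-h)$; these yield the same formula after rescaling), then use \eqref{subadditivity} with a parameter $\delta$ (the paper's $\varepsilon$) to split off the compact positive error $h(1+T)^{-1}$ for the $\limsup$ bound. The only cosmetic difference is in the $\liminf$: the paper reads it off directly from the exact formula via \eqref{monotonicity} (dropping the positive term $h(1+T)^{-1}$), whereas you invoke Birman--Schwinger a second time to get $N(-h,hT+V)=B_h$ and then use $N(0,\cdot)\ge N(-h,\cdot)$; both are valid and equally short.
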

\begin{proof} The Birman--Schwinger principle in the form above applied with $\lambda=h$ asserts that
    \[
        N(0,hT+V) = N(-h,hT+V-h) = \Tr(\chi_{(1,\infty)}(-(h+hT)^{-\frac12}(V-h)(h+hT)^{-\frac12})).
    \]
    Equivalently,
    \begin{equation}\label{exact_bs}
        N(0,hT+V) = \Tr(\chi_{(h,\infty)}(-(1+T)^{-\frac12}V(1+T)^{-\frac12}+h(1+T)^{-1})).
    \end{equation}
    For all $0 < \varepsilon < 1$, \eqref{subadditivity} implies that,
    \begin{align*}
        \Tr(\chi_{(h,\infty)}&(-(1+T)^{-\frac12}V(1+T)^{-\frac12}+h(1+T)^{-1}))\\
                             &\leq \Tr(\chi_{(h(1-\varepsilon),\infty)}(-(1+T)^{-\frac12}V(1+T)^{-\frac12}+h(1+T)^{-1})))\\
                             &\quad +\Tr(\chi_{(h \varepsilon,\infty)}(h(1+T)^{-1}))\\
                             &= \Tr(\chi_{(h(1-\varepsilon),\infty)}(-(1+T)^{-\frac12}V(1+T)^{-\frac12}))+\Tr(\chi_{(\varepsilon,\infty)}((1+T)^{-1})).
    \end{align*}

Thus,
\begin{align*}
 \limsup_{h\to0}h^qN(0,hT+V)&\leq\limsup_{h\to0}h^q\Big(\Tr(\chi_{(h(1-\varepsilon),\infty)}(-(1+T)^{-\frac12}V(1+T)^{-\frac12}))\\
                            &\quad +\Tr(\chi_{(\varepsilon,\infty)}((1+T)^{-1}))\Big)\\
                            &=\limsup_{h\to0}h^q\Big(\Tr(\chi_{(h(1-\varepsilon),\infty)}(-(1+T)^{-\frac12}V(1+T)^{-\frac12})))\Big)\\
                            &=(1-\epsilon)^{-q}\cdot \limsup_{h\to0}h^q\Big(\Tr(\chi_{(h,\infty)}(-(1+T)^{-\frac12}V(1+T)^{-\frac12})))\Big). 
\end{align*}
Since $\epsilon\in(0,1)$ is arbitrary, the first inequality follows

Since $h(T+1)^{-1}\geq 0$, \eqref{monotonicity} and \eqref{exact_bs} imply that
\begin{equation}\label{spectral_lower_bound}
N(0,hT+V) \geq \Tr(\chi_{(h,\infty)}(-(1+T)^{-\frac12}V(1+T)^{-\frac12})).
\end{equation}
The second inequality follows.
\end{proof}

Recall that if $S$ is a positive compact linear operator, then $\mu(S)$ can be described as the sequence of eigenvalues of $S$ arranged in decreasing order with multiplicities. Equivalently,
\begin{equation}\label{inverse_characterisation}
    \mu(t,S) = \inf\{s \geq 0\;:\;\Tr(\chi_{(s,\infty)}(S))\leq t\},\quad 0\leq S \in \Kc.
\end{equation}
The following formula is a standard. Its proof is an easy exercise.
\begin{lemma}\label{cut_to_positive_part}
    Let $S$ be a self-adjoint compact linear operator, with positive part $S_+.$ For all $q > 0,$ we have
    \[
        \lim_{h\downarrow 0} h^q\Tr(\chi_{(h,\infty)}(S)) = \lim_{t\to\infty} t\mu(t,S_+)^q.
    \]
    whenever either side exists.
\end{lemma}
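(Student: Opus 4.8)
The plan is to reduce immediately to the positive part and then exploit the fact that the eigenvalue counting function and the singular value function of a positive compact operator are generalized inverses of one another. Since for every $h>0$ the eigenvalues of $S$ exceeding $h$ are precisely the eigenvalues of $S_+$ exceeding $h$, we have $\chi_{(h,\infty)}(S)=\chi_{(h,\infty)}(S_+)$, so $\Tr(\chi_{(h,\infty)}(S))=\Tr(\chi_{(h,\infty)}(S_+))=:N(h)$, and it suffices to prove the identity with $S$ replaced by the positive compact operator $S_+$. From \eqref{inverse_characterisation}, together with the monotonicity and right-continuity of $s\mapsto N(s)$ and the fact that $N$ is integer-valued, one extracts the basic equivalence
\[
    N(s) > t \iff \mu(t,S_+) > s, \qquad s>0,\ t\geq 0,
\]
which says exactly that $N(\cdot)$ and $\mu(\cdot,S_+)$ are the (non-increasing, right-continuous) generalized inverses of each other.

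Granting this, the proof is a routine change of variable in the asymptotic relation. Suppose first that $\lim_{t\to\infty} t\mu(t,S_+)^q = c$. Fix $\varepsilon>0$; then for all large $t$ one has $(c-\varepsilon)t^{-1}\leq \mu(t,S_+)^q\leq (c+\varepsilon)t^{-1}$, the left inequality being vacuous when $c=0$. For small $h>0$, setting $t:=\lceil (c+\varepsilon)h^{-q}\rceil$ forces $\mu(t,S_+)\leq((c+\varepsilon)/t)^{1/q}\leq h$, hence $N(h)\leq t\leq (c+\varepsilon)h^{-q}+1$ and so $\limsup_{h\downarrow 0}h^qN(h)\leq c+\varepsilon$; while (when $c>0$) setting $t:=\lfloor (c-\varepsilon)h^{-q}\rfloor$ forces $\mu(t,S_+)>h$ — up to a harmless sharpening of the constant to absorb the strictness — hence $N(h)>t\geq (c-\varepsilon)h^{-q}-1$ and $\liminf_{h\downarrow 0}h^qN(h)\geq c-\varepsilon$. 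Letting $\varepsilon\downarrow 0$ gives $\lim_{h\downarrow 0}h^qN(h)=c$. The converse implication is obtained by the identical scheme with the two functions swapped: starting from $\lim_{h\downarrow 0}h^qN(h)=c$, one chooses $h:=((c\pm\varepsilon)/t)^{1/q}$ and uses the same equivalence to sandwich $t\mu(t,S_+)^q$ between $c-\varepsilon$ and $c+\varepsilon$.

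I do not expect any genuine obstacle. The only points needing a line of care are the bookkeeping with the floor and ceiling (whose error terms are $O(h^q)\to 0$), the direction of the strict versus non-strict inequalities in the generalized-inverse relation (handled by an arbitrarily small adjustment of the constants), and the degenerate cases: $S_+$ of finite rank, where both sides vanish, and the limit equal to $+\infty$, where the same sandwiching argument with $c-\varepsilon$ replaced by an arbitrarily large constant shows the other side diverges as well. Each of these is dispatched in a sentence, which is consistent with the claim that the proof is an easy exercise.
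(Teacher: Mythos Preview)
Your argument is correct and is the standard one: reduce to $S_+$, use that the counting function $N$ and $\mu(\cdot,S_+)$ are generalized inverses via \eqref{inverse_characterisation}, and run the usual sandwiching. The paper itself gives no proof of this lemma, declaring it ``an easy exercise,'' so there is nothing to compare against; your write-up is exactly what one would expect such an exercise to look like, and the minor bookkeeping points you flag (strict versus non-strict inequalities, floor/ceiling errors, the finite-rank and $c=+\infty$ degenerate cases) are all genuinely harmless.
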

%

Applying the preceding lemma to Theorem \ref{bs_principle} yields the following.
\begin{corollary}\label{bs_principle_in_terms_of_mu}
Let $T$ be a self-adjoint positive unbounded linear operator on a Hilbert space $H$ with compact resolvent. Let $V$ be a self-adjoint bounded linear operator.
	For all $q>0,$ we have
	\[  	\lim_{h\downarrow 0}h^qN(0,hT+V) = \lim_{t\to\infty}t\mu\Big(t,\Big((1+T)^{-\frac12}V(1+T)^{-\frac12}\Big)_-\Big)^q
	\]  
	if the limit on the right exists.

\end{corollary}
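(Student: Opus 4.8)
The plan is to chain together Theorem \ref{bs_principle} and Lemma \ref{cut_to_positive_part}, applied to the single self-adjoint compact operator
$S := -(1+T)^{-\frac12}V(1+T)^{-\frac12}$. First I would check that $S$ falls under the hypotheses of Lemma \ref{cut_to_positive_part}: it is self-adjoint because $V$ is self-adjoint and it is sandwiched symmetrically; and it is compact because $T$ has compact resolvent, so $(1+T)^{-1}\in\Kc(H)$, hence $(1+T)^{-\frac12}\in\Kc(H)$, and multiplying by the bounded operator $V$ keeps it in $\Kc(H)$. Next I would record the elementary identity $(-X)_+ = X_-$ for self-adjoint $X$ (immediate from the scalar identity $(-x)_+ = \tfrac12(|x|-x) = x_-$ via functional calculus), which gives $S_+ = \left((1+T)^{-\frac12}V(1+T)^{-\frac12}\right)_-$. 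Thus the right-hand side of the asserted identity is precisely $\lim_{t\to\infty} t\mu(t,S_+)^q$.

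Now assume that this limit exists. Lemma \ref{cut_to_positive_part}, applied to $S$ with exponent $q$, then tells us that $\lim_{h\downarrow 0} h^q\Tr(\chi_{(h,\infty)}(S))$ exists and equals $\lim_{t\to\infty} t\mu(t,S_+)^q$. With that limit now known to exist, the hypothesis of Theorem \ref{bs_principle} is satisfied, so $\lim_{h\downarrow 0} h^q N(0,hT+V)$ exists and equals $\lim_{h\downarrow 0} h^q\Tr(\chi_{(h,\infty)}(-(1+T)^{-\frac12}V(1+T)^{-\frac12})) = \lim_{h\downarrow 0} h^q\Tr(\chi_{(h,\infty)}(S))$. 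Composing the two equalities yields
\[
    \lim_{h\downarrow 0} h^q N(0,hT+V) = \lim_{t\to\infty} t\mu\Big(t,\Big((1+T)^{-\frac12}V(1+T)^{-\frac12}\Big)_-\Big)^q,
\]
which is the claim.

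I do not expect any genuine obstacle here: all of the analytic content has already been placed into Theorem \ref{bs_principle} (which packages the Birman--Schwinger principle together with the perturbation estimates \eqref{subadditivity}--\eqref{monotonicity}) and into Lemma \ref{cut_to_positive_part} (the standard rearrangement relating a decaying eigenvalue-counting function to the singular value function via \eqref{inverse_characterisation}). The only points requiring care are purely formal: verifying that $S$ is self-adjoint and compact so that Lemma \ref{cut_to_positive_part} applies, identifying $S_+$ with the negative part of $(1+T)^{-\frac12}V(1+T)^{-\frac12}$, and tracking the direction in which the ``if either side exists'' hypotheses propagate, namely from the existence of $\lim_{t\to\infty}t\mu(t,S_+)^q$ through Lemma \ref{cut_to_positive_part} and then into Theorem \ref{bs_principle}.
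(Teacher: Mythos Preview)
Your proposal is correct and matches the paper's approach exactly: the paper simply states that the corollary follows by applying Lemma \ref{cut_to_positive_part} to Theorem \ref{bs_principle}, and your write-up spells out precisely this chaining, including the identification $S_+ = \big((1+T)^{-\frac12}V(1+T)^{-\frac12}\big)_-$ and the verification that $S$ is self-adjoint and compact.
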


In the next lemma, we use the following fact: if $q > 1,$ and $S, T$ are self-adjoint linear operators such that
\[
    S-T \in \Lc_{q,\infty}
\]
then
\begin{equation}\label{positive_part_trick}
    S_+-T_+,\, S_--T_- \in \Lc_{q,\infty}
\end{equation}
where $S_{\pm}$ and $T_{\pm}$ are the positive and negative parts of $S$ and $T$ respectively, so that $T = T_+-T_-$ and $S = S_+-S_-.$

This is in fact a special case of the main result of \cite{PS2011} (compare the similar assertion \eqref{ps_acta_inequality}). To see this,
note that the main result of \cite{PS2011} implies that
\[
    f(S)-f(T) \in \Lc_{q,\infty}
\]
for every Lipschitz continuous function $f$. Since $f(t) = t_+$ and $f(t) = t_-$ are Lipschitz functions, \eqref{positive_part_trick} follows.

\begin{lemma}\label{last section commutator lemma}
Let $T$ be a self-adjoint positive unbounded linear operator, and let $V$ be a self-adjoint bounded linear operator such that
\begin{enumerate}[{\rm (i)}]
\item $(1+T)^{-\frac12}\in\mathcal{L}_{p,\infty};$
\item $[(1+T)^{-\frac12},V_{\pm}^{\frac12}] \in (\mathcal{L}_{p,\infty})_0;$
\end{enumerate}    
where $p>2.$ It follows that
$$((1+T)^{-\frac12}V(1+T)^{-\frac12})_- - V_{-}^{\frac12}(1+T)^{-1}V_{-}^{\frac12} \in (\mathcal{L}_{\frac{p}{2},\infty})_0.$$
\end{lemma}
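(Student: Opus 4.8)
Throughout write $B:=(1+T)^{-\frac12}$, so that hypotheses (i) and (ii) say $B\in\Lc_{p,\infty}$ and $[B,V_\pm^{\frac12}]\in(\Lc_{p,\infty})_0$, and the assertion to be proved reads $(BVB)_- - V_-^{\frac12}B^2V_-^{\frac12}\in(\Lc_{\frac{p}{2},\infty})_0$. The idea is to compare $BVB$ with the auxiliary bounded self-adjoint operator
\[
    \Sigma \;:=\; V_+^{\frac12}B^2V_+^{\frac12} - V_-^{\frac12}B^2V_-^{\frac12}.
\]
Both summands of $\Sigma$ are positive, and their product in either order vanishes since $V_+^{\frac12}V_-^{\frac12}=0$; hence $\Sigma$ is a difference of two positive operators with zero product, so it is already in its Jordan decomposition and $\Sigma_+ = V_+^{\frac12}B^2V_+^{\frac12}$, $\Sigma_- = V_-^{\frac12}B^2V_-^{\frac12}$. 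Granting for a moment that $BVB-\Sigma\in(\Lc_{\frac{p}{2},\infty})_0$, an application of (the $(\Lc_{q,\infty})_0$-version of) the implication \eqref{positive_part_trick} with $S=BVB$, $T=\Sigma$ and $q=\frac{p}{2}>1$ gives $(BVB)_- - \Sigma_- = (BVB)_- - V_-^{\frac12}B^2V_-^{\frac12}\in(\Lc_{\frac{p}{2},\infty})_0$, as required.

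It therefore remains to show $BVB-\Sigma\in(\Lc_{\frac{p}{2},\infty})_0$. Splitting $V=V_+-V_-$ reduces this to proving $BV_\pm B - V_\pm^{\frac12}B^2V_\pm^{\frac12}\in(\Lc_{\frac{p}{2},\infty})_0$ for each sign. Setting $W:=V_\pm^{\frac12}$ and substituting $BW = WB+[B,W]$ yields the algebraic identity
\[
    BW^2B - WB^2W \;=\; (BW)(WB)-(WB)(BW)\;=\;[B,W]\,WB - WB\,[B,W].
\]
Now $[B,W]\in(\Lc_{p,\infty})_0$ by hypothesis, $W$ is bounded, and $B\in\Lc_{p,\infty}$; since $(\Lc_{p,\infty})_0$ is an operator ideal and $(\Lc_{p,\infty})_0\cdot\Lc_{p,\infty}\subseteq(\Lc_{\frac{p}{2},\infty})_0$ by Hölder's inequality, both terms on the right lie in $(\Lc_{\frac{p}{2},\infty})_0$. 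Summing the $+$ and $-$ contributions gives $BVB-\Sigma\in(\Lc_{\frac{p}{2},\infty})_0$, which closes the argument.

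The two points needing care are: (a) the identification $\Sigma_\pm=V_\pm^{\frac12}B^2V_\pm^{\frac12}$, which uses only the elementary observation that if $P,Q\geq0$ and $PQ=0$ then $(P-Q)_+=P$ and $(P-Q)_-=Q$; and (b) passing the relation $BVB-\Sigma\in(\Lc_{\frac{p}{2},\infty})_0$ to negative parts, where the hypothesis $p>2$ enters precisely to make $\frac{p}{2}>1$, so that the Lipschitz perturbation bound of \cite{PS2011} (the source of \eqref{positive_part_trick}) applies in $\Lc_{\frac{p}{2},\infty}$; its $(\Lc_{q,\infty})_0$-refinement follows just as the $(\Lc_{q,\infty})_0$-implications recorded after \eqref{ps_acta_inequality}. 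In particular this route uses neither Lemma \ref{SZ_lemma} nor Lemma \ref{HSZ_modified_inequality}, only hypotheses (i)--(ii), Hölder's inequality, and \cite{PS2011}.
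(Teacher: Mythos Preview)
Your proof is correct and follows essentially the same route as the paper's: both arguments show $BV_\pm B - V_\pm^{\frac12}B^2V_\pm^{\frac12}\in(\Lc_{\frac{p}{2},\infty})_0$ via the same commutator identity, identify the Jordan decomposition of the difference $V_+^{\frac12}B^2V_+^{\frac12}-V_-^{\frac12}B^2V_-^{\frac12}$ using $V_+^{\frac12}V_-^{\frac12}=0$, and then invoke \eqref{positive_part_trick} (with $q=\frac{p}{2}>1$) to pass to negative parts. Your write-up is in fact slightly more careful than the paper's in explicitly flagging the $(\Lc_{q,\infty})_0$-refinement of \eqref{positive_part_trick}, which the paper uses without comment.
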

\begin{proof} We write
$$-(1+T)^{-\frac12}V(1+T)^{-\frac12} = (1+T)^{-\frac12}(V_--V_+)(1+T)^{-\frac12}.$$
Hence,
$$(1+T)^{-\frac12}V_{\pm}(1+T)^{-\frac12}-V_{\pm}^{\frac12}(1+T)^{-1}V_{\pm}^{\frac12}=$$   
$$=[(1+T)^{-\frac12},V_{\pm}^{\frac12}]\cdot V_{\pm}^{\frac12}(1+T)^{-\frac12}-V_{\pm}^{\frac12}(1+T)^{-\frac12}\cdot[(1+T)^{-\frac12},V_{\pm}^{\frac12}].$$
By assumption, we have
$$(1+T)^{-\frac12}V_{\pm}(1+T)^{-\frac12}-V_{\pm}^{\frac12}(1+T)^{-1}V_{\pm}^{\frac12}\in(\mathcal{L}_{\frac{p}{2},\infty})_0.$$   
Consequently,
$$-(1+T)^{-\frac12}V(1+T)^{-\frac12}-V_{-}^{\frac12}(1+T)^{-1}V_{-}^{\frac12}+V_+^{\frac12}(1+T)^{-1}V_+^{\frac12}\in(\mathcal{L}_{\frac{p}{2},\infty})_0.$$    
Since $p>2,$ we may apply \eqref{positive_part_trick} with $q = \frac{p}{2}$ yielding
$$\Big(-(1+T)^{-\frac12}V(1+T)^{-\frac12}\Big)_+-\Big(V_{-}^{\frac12}(1+T)^{-1}V_{-}^{\frac12}-V_+^{\frac12}(1+T)^{-1}V_+^{\frac12}\Big)_+\in(\mathcal{L}_{\frac{p}{2},\infty})_0.$$    
Obviously,
$$\Big(V_{-}^{\frac12}(1+T)^{-1}V_{-}^{\frac12}-V_+^{\frac12}(1+T)^{-1}V_+^{\frac12}\Big)_+= V_{-}^{\frac12}(1+T)^{-1}V_{-}^{\frac12}.$$
Therefore,
$$\Big(-(1+T)^{-\frac12}V(1+T)^{-\frac12}\Big)_+-V_{-}^{\frac12}(1+T)^{-1}V_{-}^{\frac12}\in(\mathcal{L}_{\frac{p}{2},\infty})_0$$    
and the assertion follows.
\end{proof}

The next corollary follows immediately from Corollary \ref{simplified_bs_perturbation_lemma}, Lemma \ref{last section commutator lemma} and Corollary \ref{bs_principle_in_terms_of_mu}.
\begin{corollary}\label{from 55 and 54} Let $T$ be a self-adjoint positive unbounded linear operator on a Hilbert space $H$ with compact resolvent. Let $V$ be a self-adjoint bounded linear operator. Let $p>2$ and suppose the assumptions in Lemma \ref{last section commutator lemma} hold. We have
	\[  	\lim_{h\downarrow 0}h^{\frac{p}{2}}N(0,hT+V) = \lim_{t\to\infty}t\mu\Big(t,V_{-}^{\frac12}(1+T)^{-1}V_{-}^{\frac12}\Big)^{\frac{p}{2}}
	\]   
	if the limit on the right exists.
\end{corollary}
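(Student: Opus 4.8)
The plan is to chain the three cited results in sequence; each of them handles exactly one link in the argument, so the proof is essentially bookkeeping. First I would apply Corollary \ref{bs_principle_in_terms_of_mu} with the parameter $q = \frac{p}{2}$. Its hypotheses hold: $T$ is positive, self-adjoint, unbounded with compact resolvent, and $V$ is bounded self-adjoint, so $hT+V$ has no essential spectrum and $N(0,hT+V)$ is finite. The corollary then gives
\[
    \lim_{h\downarrow 0} h^{\frac{p}{2}} N(0,hT+V) = \lim_{t\to\infty} t\mu\Bigl(t,\bigl((1+T)^{-\frac12}V(1+T)^{-\frac12}\bigr)_-\Bigr)^{\frac{p}{2}},
\]
valid as soon as the right-hand limit exists. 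So it remains to transfer this limit from the operator $\bigl((1+T)^{-\frac12}V(1+T)^{-\frac12}\bigr)_-$ to the operator $V_-^{\frac12}(1+T)^{-1}V_-^{\frac12}$.

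For that transfer, the second step is to invoke Lemma \ref{last section commutator lemma}, whose hypotheses are precisely the standing assumptions (i) $(1+T)^{-\frac12}\in\Lc_{p,\infty}$ and (ii) $[(1+T)^{-\frac12},V_\pm^{\frac12}]\in(\Lc_{p,\infty})_0$ with $p>2$. It yields
\[
    \bigl((1+T)^{-\frac12}V(1+T)^{-\frac12}\bigr)_- - V_-^{\frac12}(1+T)^{-1}V_-^{\frac12} \in (\Lc_{\frac{p}{2},\infty})_0.
\]
Along the way I would record that both operators lie in $\Lc_{\frac{p}{2},\infty}$ separately: by (i) and H\"older's inequality $(1+T)^{-1}\in\Lc_{\frac{p}{2},\infty}$, hence so is $V_-^{\frac12}(1+T)^{-1}V_-^{\frac12}$; similarly $(1+T)^{-\frac12}V(1+T)^{-\frac12}\in\Lc_{\frac{p}{2},\infty}$, and since the singular values of the negative part of a self-adjoint operator are dominated by those of the operator, its negative part is in $\Lc_{\frac{p}{2},\infty}$ as well. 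This is needed so that the next step applies verbatim.

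The third step is a direct application of Corollary \ref{simplified_bs_perturbation_lemma}, with its parameter ``$p$'' taken to be $\frac{p}{2}$: since the difference of the two operators lies in $(\Lc_{\frac{p}{2},\infty})_0$, the quantities $\lim_{t\to\infty}t\mu(t,\cdot)^{\frac{p}{2}}$ for the two operators coincide whenever either exists. Thus the assumed existence of $\lim_{t\to\infty}t\mu(t,V_-^{\frac12}(1+T)^{-1}V_-^{\frac12})^{\frac{p}{2}}$ forces the existence and equality of $\lim_{t\to\infty}t\mu\bigl(t,((1+T)^{-\frac12}V(1+T)^{-\frac12})_-\bigr)^{\frac{p}{2}}$, and substituting this into the first display finishes the proof. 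I do not anticipate a genuine obstacle: all the analytic content has been isolated in the preceding lemmas, and the only points requiring care are keeping the exponents ($\frac{p}{2}$ versus $p$) consistent across the three citations and verifying the $\Lc_{\frac{p}{2},\infty}$-membership so that Corollary \ref{simplified_bs_perturbation_lemma} is applicable as stated.
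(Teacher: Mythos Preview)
Your proposal is correct and follows exactly the route the paper takes: the paper simply states that the corollary ``follows immediately from Corollary \ref{simplified_bs_perturbation_lemma}, Lemma \ref{last section commutator lemma} and Corollary \ref{bs_principle_in_terms_of_mu},'' and you have spelled out precisely that chain, including the check that both operators lie in $\Lc_{\frac{p}{2},\infty}$ so that Corollary \ref{simplified_bs_perturbation_lemma} applies.
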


We now replace $\mu(V_-^{\frac12}(1+T)^{-1}V_-^{\frac12})^{\frac{p}{2}}$ with $\mu(V_-^{\frac{p}{4}}(1+T)^{-\frac{p}{2}}V_-^{\frac{p}{4}}).$
\begin{corollary}\label{very last corollary} Let $T$ be a self-adjoint positive unbounded linear operator on a Hilbert space $H$ with compact resolvent. Let $V$ be a self-adjoint bounded linear operator. Let $p>2$ and suppose the assumptions in Lemma \ref{last section commutator lemma} hold. We have
	\[  	\lim_{h\downarrow 0}h^{\frac{p}{2}}N(0,hT+V) = \lim_{t\to\infty}t\mu\Big(t,V_{-}^{\frac{p}{4}}(1+T)^{-\frac{p}{2}}V_{-}^{\frac{p}{4}}\Big)\]  
	if the limit on the right exists.
\end{corollary}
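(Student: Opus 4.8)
The approach will be to reduce everything to Corollary \ref{from 55 and 54}, which already identifies
\[
\lim_{h\downarrow 0}h^{\frac{p}{2}}N(0,hT+V)=\lim_{t\to\infty}t\mu\Big(t,V_{-}^{\frac12}(1+T)^{-1}V_{-}^{\frac12}\Big)^{\frac{p}{2}}
\]
whenever the right-hand limit exists. Thus it will suffice to show that the two limits
\[
\lim_{t\to\infty}t\mu\Big(t,V_{-}^{\frac12}(1+T)^{-1}V_{-}^{\frac12}\Big)^{\frac{p}{2}}\qquad\text{and}\qquad\lim_{t\to\infty}t\mu\Big(t,V_{-}^{\frac{p}{4}}(1+T)^{-\frac{p}{2}}V_{-}^{\frac{p}{4}}\Big)
\]
exist and coincide as soon as one of them does. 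Since $\mu(t,S)^{\frac{p}{2}}=\mu(t,S^{\frac{p}{2}})$ for positive compact $S$, the first of these equals $\lim_{t\to\infty}t\mu\big(t,(V_{-}^{\frac12}(1+T)^{-1}V_{-}^{\frac12})^{\frac{p}{2}}\big)$, so by Corollary \ref{simplified_bs_perturbation_lemma} (applied with its parameter equal to $1$) the whole statement will follow once we prove
\[
V_{-}^{\frac{p}{4}}(1+T)^{-\frac{p}{2}}V_{-}^{\frac{p}{4}}-\big(V_{-}^{\frac12}(1+T)^{-1}V_{-}^{\frac12}\big)^{\frac{p}{2}}\in(\Lc_{1,\infty})_0.
\]

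To obtain this membership I would invoke Lemma \ref{from_csz_lemma} with its exponent taken to be $\frac{p}{2}$ (which exceeds $1$ precisely because $p>2$), with $A=V_{-}$ and $B=(1+T)^{-1}$; with this choice the conclusion of that lemma is exactly the displayed inclusion. Two hypotheses have to be verified. First, $(1+T)^{-1}=\big((1+T)^{-\frac12}\big)^2\in\Lc_{p/2,\infty}$ by the H\"older inequality, since $(1+T)^{-\frac12}\in\Lc_{p,\infty}$ by hypothesis (i) of Lemma \ref{last section commutator lemma}. Second, one needs $[V_{-}^{\frac12},(1+T)^{-1}]\in(\Lc_{p/2,\infty})_0$; this I would deduce from Lemma \ref{HSZ_modified_inequality} applied with $X=(1+T)^{-\frac12}$, $Y=V_{-}^{\frac12}$, exponent $2$, and $q=p$, starting from the hypothesis $[(1+T)^{-\frac12},V_{-}^{\frac12}]\in(\Lc_{p,\infty})_0$ of Lemma \ref{last section commutator lemma}.

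I do not expect a genuine obstacle: the whole argument is an exercise in composing Corollary \ref{from 55 and 54}, Lemma \ref{from_csz_lemma} and Corollary \ref{simplified_bs_perturbation_lemma}. The only place that calls for attention is bookkeeping with the exponents --- using Lemma \ref{from_csz_lemma} with exponent $\frac{p}{2}$ rather than $p$, identifying $B=(1+T)^{-1}$ (not $(1+T)^{-\frac12}$), and upgrading the given $\Lc_{p,\infty}$-commutator estimate to an $\Lc_{p/2,\infty}$-commutator estimate via Lemma \ref{HSZ_modified_inequality} --- together with keeping track of the condition $p>2$, which is exactly what keeps us in the range of validity of Lemma \ref{from_csz_lemma}.
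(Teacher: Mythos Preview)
Your proposal is correct and follows essentially the same route as the paper: apply Lemma~\ref{from_csz_lemma} with $A=V_-$, $B=(1+T)^{-1}$ and exponent $\tfrac{p}{2}$ to get the $(\Lc_{1,\infty})_0$ inclusion, then combine Corollary~\ref{simplified_bs_perturbation_lemma} with Corollary~\ref{from 55 and 54}. The only difference is that you explicitly verify the hypotheses of Lemma~\ref{from_csz_lemma} (via H\"older and Lemma~\ref{HSZ_modified_inequality}), whereas the paper invokes the lemma without spelling this out.
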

\begin{proof} Applying Lemma \ref{from_csz_lemma} with $A=V_-,$ $B=(1+T)^{-1}$ and $\frac{p}{2}$ instead of $p,$ we obtain that
$$\Big(V_{-}^{\frac12}(1+T)^{-1}V_{-}^{\frac12}\Big)^{\frac{p}{2}}-V_{-}^{\frac{p}{4}}(1+T)^{-\frac{p}{2}}V_{-}^{\frac{p}{4}}\in(\mathcal{L}_{1,\infty})_0.$$	
Since the limit
\[
    \lim_{t\to\infty}t\mu\Big(t,V_{-}^{\frac{p}{4}}(1+T)^{-\frac{p}{2}}V_{-}^{\frac{p}{4}}\Big)
\]
exists, Corollary \ref{simplified_bs_perturbation_lemma} implies that
$$\mu\Big(t,V_{-}^{\frac12}(1+T)^{-1}V_{-}^{\frac12}\Big)^{\frac{p}{2}}-\mu\Big(t,V_{-}^{\frac{p}{4}}(1+T)^{-\frac{p}{2}}V_{-}^{\frac{p}{4}}\Big)=o(t^{-1}),\quad t\to\infty.$$
The assertion follows from Corollary \ref{from 55 and 54}.
\end{proof}

\begin{lemma}\label{second last lemma} Suppose we are in the conditions of Theorem \ref{main_spectral_triple_theorem}. For every $V=V^{\ast}\in \overline{\Ac},$ there exists a limit
$$\lim_{t\to\infty}t\mu(t,(1+D^2)^{-\frac{p}{4}}V_-^{\frac{p}{2}}(1+D^2)^{-\frac{p}{4}}).$$	
\end{lemma}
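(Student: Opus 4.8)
The plan is to observe that this is an immediate consequence of Theorem \ref{main_spectral_triple_theorem}, in the form of Proposition \ref{abstract_cif}, once the correct positive element of $\overline{\Ac}$ is identified. So the only thing to do is reduce the statement to an application of that result.

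First I would note that $\overline{\Ac}$ is, by definition, a $C^*$-subalgebra of $\Lc_\infty(H)$, and is therefore closed under the continuous functional calculus. Since $V = V^* \in \overline{\Ac}$ and the function $t \mapsto (t_-)^{p/2}$ is continuous on $\Rl$ (in particular on the spectrum of $V$, which is a bounded subset of $\Rl$) and nonnegative, the element $V_-^{p/2}$ belongs to $\overline{\Ac}$ and satisfies $V_-^{p/2} \geq 0$. Then I would apply Proposition \ref{abstract_cif} with $b = V_-^{p/2}$: since the spectral triple is assumed to satisfy the hypotheses of Theorem \ref{main_spectral_triple_theorem} (in particular Condition \ref{zeta_condition}), the proposition guarantees the existence of $\lim_{t\to\infty} t\mu(t,(1+D^2)^{-\frac{p}{4}}b(1+D^2)^{-\frac{p}{4}})$ for every $0 \leq b \in \overline{\Ac}$, which is exactly the assertion of the lemma.

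There is essentially no obstacle here; the substantive work has already been carried out in Proposition \ref{abstract_cif} (and the Tauberian machinery behind it). The only point needing even minimal care is the closure of $\overline{\Ac}$ under functional calculus, which is automatic. This lemma is the last ingredient needed to invoke Corollary \ref{very last corollary} for $T$ a rescaled $D^2$ (the $QC^1$ and $\Lc_{p,\infty}$-summability hypotheses being what is used to verify the commutator conditions there), and hence to deduce the semiclassical Weyl law of Theorem \ref{scwl_spectral_triple_thm}.
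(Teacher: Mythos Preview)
Your proposal is correct and follows essentially the same approach as the paper: both observe that $V_-^{p/2}$ lies in $\overline{\Ac}$ by continuous functional calculus in the $C^*$-algebra, and then invoke Proposition \ref{abstract_cif} directly.
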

\begin{proof} 
Since the function $f:x\to x_+^{\frac{p}{2}},$ $x\in\mathbb{R},$ is continuous and $\overline{\Ac}$ is a $C^*$-algebra, it follows that $V_-^{\frac{p}{2}} \in \overline{\Ac}.$ The result is now a special case of Proposition \ref{abstract_cif}.

\end{proof}

With the preceding abstract preliminaries in place, we now proceed to the proof of Theorem \ref{scwl_spectral_triple_thm}.
\begin{proof}[Proof of Theorem \ref{scwl_spectral_triple_thm}]
Initially take $\lambda=0.$ We have that $V_-^{\frac{1}{2}}\in \overline{\Ac},$ and hence Lemma \ref{section 5 cts lemma} implies that
\[
    [(1+D^2)^{-\frac{1}{2}},V_-^{\frac{1}{2}}] \in (\Lc_{p,\infty})_0.
\]
It follows that the assumptions in Lemma \ref{last section commutator lemma} hold for $T=D^2$ and $V.$ Thus, Corollary \ref{very last corollary} asserts that
\[  	\lim_{h\downarrow 0}h^{\frac{p}{2}}N(0,hT+V) = \lim_{t\to\infty}t\mu\Big(t,V_{-}^{\frac{p}{4}}(1+T)^{-\frac{p}{2}}V_{-}^{\frac{p}{4}}\Big)\]  
if the limit on the right exists. However, 
$$\mu\Big(V_{-}^{\frac{p}{4}}(1+T)^{-\frac{p}{2}}V_{-}^{\frac{p}{4}}\Big)=\mu\Big((1+D^2)^{-\frac{p}{4}}V_-^{\frac{p}{2}}(1+D^2)^{-\frac{p}{4}}\Big)$$
and thus the existence of the limit is now guaranteed by Lemma \ref{second last lemma}.

Replacing $V$ with $V-\lambda$ completes the proof in the general case.
\end{proof}

As illustration, we give a semiclassical Weyl law for the noncommutative torus.
\begin{corollary}\label{scwl_torus_continuous}
    Let $d>2.$ For all $V=V^* \in C(\Circ^d_\theta)$ we have
    \[
        \lim_{h\downarrow 0} h^{d}N(\lambda,-h^2\Delta+\rho(V)) = \frac{\Vol(S^{d-1})}{d}\tau((V-\lambda)_-^{\frac{d}{2}}),\quad \lambda\in \Rl.
    \]
\end{corollary}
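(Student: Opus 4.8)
The plan is to reduce Corollary \ref{scwl_torus_continuous} to Theorem \ref{scwl_spectral_triple_thm} applied to the standard isospectral-deformation spectral triple for $\Circ^d_\theta$, and then to identify the right-hand side explicitly using Theorem \ref{strong_cif_nc_torus}. First I would observe that the spectral triple $(C^\infty(\Circ^d_\theta)\otimes 1_{\Cplx^N}, L_2(\Circ^d_\theta)\otimes\Cplx^N, D)$, with $D = \sum_j \partial_j\otimes\gamma_j$, is $\Lc_{d,\infty}$-summable, $QC^1$, and satisfies Condition \ref{zeta_condition} (for $d>2$, so that $p=d>2$ as required) — all of this has been established in the preceding subsection via Lemma \ref{facts_about_nctori}, Lemma \ref{zeta_function_lemma}, and Lemma \ref{torus_analyticity_check}. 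Since $D^2 = (-\Delta)\otimes 1_{\Cplx^N}$ acts diagonally, one has $h^2D^2 + \rho(V) = (-h^2\Delta + \rho(V))\otimes 1_{\Cplx^N}$ after identifying $\rho(V)$ with $\rho(V)\otimes 1_{\Cplx^N}$, so $N(\lambda, h^2D^2+\rho(V)) = N\cdot N(\lambda, -h^2\Delta+\rho(V))$; the factor $N = \lfloor d/2\rfloor$ is a harmless multiplicative constant that will cancel against the analogous multiplicity in the trace on the right-hand side.

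Next I would invoke Theorem \ref{scwl_spectral_triple_thm} with $p = d$ and this $V = V^* \in \overline{\Ac} = C(\Circ^d_\theta)$ (acting on the first tensor factor): it gives
\[
    \lim_{h\downarrow 0} h^d N(\lambda, h^2D^2+\rho(V)) = \lim_{t\to\infty} t\mu\bigl(t, (1+D^2)^{-\frac{d}{4}}(\rho(V)-\lambda)_-^{\frac{d}{2}}(1+D^2)^{-\frac{d}{4}}\bigr),
\]
and in particular the limit on the right exists. Now $(1+D^2)^{-1} = (1-\Delta)^{-1}\otimes 1_{\Cplx^N}$, and $(\rho(V)-\lambda)_-^{d/2} = \rho((V-\lambda)_-^{d/2})\otimes 1_{\Cplx^N}$ by continuous functional calculus inside $C(\Circ^d_\theta)$ (using that $t\mapsto t_-^{d/2}$ is continuous), so the operator inside $\mu$ is $\bigl((1-\Delta)^{-\frac{d}{4}}\rho((V-\lambda)_-^{\frac{d}{2}})(1-\Delta)^{-\frac{d}{4}}\bigr)\otimes 1_{\Cplx^N}$; its singular value sequence is that of the scalar operator with every value repeated $N$ times, so $\mu(t, \cdot) = \mu(t/N, \cdot_{\mathrm{scalar}})$ and hence $t\mu(t,\cdot) \to N\cdot\lim_{s\to\infty} s\mu(s, \cdot_{\mathrm{scalar}})$.

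Finally I would apply Theorem \ref{strong_cif_nc_torus} with $b = (V-\lambda)_-^{d/2} \in C(\Circ^d_\theta)$, $b\geq 0$, which gives
\[
    \lim_{s\to\infty} s\mu\bigl(s, (1-\Delta)^{-\frac{d}{4}}\rho((V-\lambda)_-^{\frac{d}{2}})(1-\Delta)^{-\frac{d}{4}}\bigr) = \frac{\Vol(S^{d-1})}{d}\tau\bigl((V-\lambda)_-^{\frac{d}{2}}\bigr).
\]
Combining the three displays, the two factors of $N$ cancel, and I obtain $\lim_{h\downarrow 0} h^d N(\lambda, -h^2\Delta+\rho(V)) = \frac{\Vol(S^{d-1})}{d}\tau((V-\lambda)_-^{d/2})$, as claimed. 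I do not expect any genuine obstacle here: the corollary is a straightforward specialization, and the only points requiring a word of care are (i) keeping track of the tensor factor $\Cplx^N$ and checking that the multiplicity constant cancels, and (ii) noting that $C(\Circ^d_\theta)$ being a $C^*$-algebra lets us apply continuous functional calculus so that $(V-\lambda)_-^{d/2}$ lands back in $C(\Circ^d_\theta)$, which is exactly what is needed to feed it into Theorem \ref{strong_cif_nc_torus} and to ensure the hypotheses of Theorem \ref{scwl_spectral_triple_thm} are met.
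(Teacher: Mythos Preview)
Your proof is correct and follows essentially the same route as the paper: apply Theorem \ref{scwl_spectral_triple_thm} to the torus spectral triple and then identify the limit via Theorem \ref{strong_cif_nc_torus} with $b=(V-\lambda)_-^{d/2}$. If anything you are more careful than the paper, which passes silently over the $\Cplx^N$ tensor factor that you track explicitly and cancel.
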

\begin{proof}
    Theorems \ref{scwl_spectral_triple_thm} and \ref{strong_cif_nc_torus}  (with $b = V_-^{\frac{d}{2}}$) imply the existence and coincidence of the limits
    \[
        \lim_{h\downarrow 0} h^{d}N(\lambda,-h^2\Delta+\rho(V)) = \lim_{t\to\infty} t\mu(t,(1-\Delta)^{-\frac{d}{4}}(\rho(V)-\lambda)_-^{\frac{d}{2}}(1-\Delta)^{-\frac{d}{4}}),\quad \lambda\in \Rl.
    \]
    From Theorem \ref{strong_cif_nc_torus}, the latter limit is equal to $\frac{\Vol(S^{d-1})}{d}\tau((V-\lambda)_-^{\frac{d}{2}}).$
\end{proof}

The assumption that $V \in C(\Circ^d_\theta)$ in Corollary \ref{scwl_torus_continuous} is far stronger than necessary. 
It was proved in \cite{MP2021} for $d>2$ that if $V = V^*\in L_{\frac{d}{2}}(\Circ^d_\theta),$ then there exists a self-adjoint operator $-h^2\Delta+\rho(V)$ defined
as a sum of quadratic forms. Here, $L_{\frac{d}{2}}(\Circ^d_\theta)$ is an $L_p$-space on the noncommutative torus, we refer to \cite{MP2021} for details.
We may extend Corollary \ref{scwl_torus_continuous} to a much wider class of $V$ by following the arguments of Simon in the classical case \cite{Simon1976} and applying a Cwikel--Lieb--Rozenblum inequality from \cite{MP2021}.
We apply the following inequality (which is a version of \eqref{subadditivity})
\begin{equation}\label{form_subadditivity}
    N(0,T+S)\leq N(0,T)+N(0,S).
\end{equation}
Here, $T$ and $S$ are potentially unbounded self-adjoint operators and $T+S$ may be defined in the sense of quadratic forms.
\begin{corollary}\label{conjecture_confirmation}
    Let $d>2.$ For all $V = V^* \in L_{\frac{d}{2}}(\Circ^d_\theta),$ we have
    \[
        \lim_{h\downarrow 0} h^dN(\lambda,-h^2\Delta+\rho(V)) = \frac{\Vol(S^{d-1})}{d}\tau((V-\lambda)_-^{\frac{d}{2}}),\quad \lambda\in \Rl.
    \]
\end{corollary}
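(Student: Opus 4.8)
The plan is to follow Simon's classical strategy \cite{Simon1976}: reduce to the case $\lambda = 0$ by replacing $V$ with $V - \lambda$ (which still lies in $L_{d/2}(\Circ^d_\theta)$, since constant functions have finite trace), approximate $V$ in the $L_{d/2}$-norm by a continuous self-adjoint $W \in C(\Circ^d_\theta)$ to which Corollary \ref{scwl_torus_continuous} already applies, and then absorb the error using the subadditivity inequality \eqref{form_subadditivity} together with a Cwikel--Lieb--Rozenblum bound of the shape $N(0, -h^2\Delta + \rho(U)) \leq C_d h^{-d}\tau(U_-^{d/2})$, valid for $U = U^* \in L_{d/2}(\Circ^d_\theta)$ and $d > 2$, which is available from \cite{MP2021}.

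For the upper bound, fix $\sigma \in (0,1)$ and $\varepsilon > 0$ and choose $W = W^* \in C(\Circ^d_\theta)$ with $\|V - W\|_{L_{d/2}} < \varepsilon$. Writing the quadratic-form sum $-h^2\Delta + \rho(V) = \big((1-\sigma)(-h^2\Delta) + \rho(W)\big) + \big(\sigma(-h^2\Delta) + \rho(V - W)\big)$ and applying \eqref{form_subadditivity} gives
\[
    N(0, -h^2\Delta + \rho(V)) \leq N\Big(0,\, -h^2\Delta + \frac{1}{1-\sigma}\rho(W)\Big) + N\Big(0,\, -h^2\Delta + \frac{1}{\sigma}\rho(V - W)\Big).
\]
By Corollary \ref{scwl_torus_continuous} the first term contributes $\frac{\Vol(S^{d-1})}{d(1-\sigma)^{d/2}}\tau(W_-^{d/2})$ in the limit $h \downarrow 0$, while the Cwikel--Lieb--Rozenblum bound controls the second by $C_d \sigma^{-d/2} h^{-d}\|V - W\|_{L_{d/2}}^{d/2} \leq C_d \sigma^{-d/2}h^{-d}\varepsilon^{d/2}$. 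Hence $\limsup_{h \downarrow 0} h^d N(0, -h^2\Delta + \rho(V)) \leq \frac{\Vol(S^{d-1})}{d(1-\sigma)^{d/2}}\tau(W_-^{d/2}) + C_d \sigma^{-d/2}\varepsilon^{d/2}$. The elementary estimate $\big\||X|^{d/2} - |Y|^{d/2}\big\|_1 \leq \frac{d}{2}\big(\|X\|_{d/2} + \|Y\|_{d/2}\big)^{\frac{d}{2}-1}\|X - Y\|_{d/2}$ (valid since $\frac{d}{2} \geq 1$) together with the norm-continuity of $U \mapsto U_-$ shows $\tau(W_-^{d/2}) \to \tau(V_-^{d/2})$ as $\varepsilon \downarrow 0$; letting first $\varepsilon \downarrow 0$ and then $\sigma \downarrow 0$ yields $\limsup_{h \downarrow 0} h^d N(0, -h^2\Delta + \rho(V)) \leq \frac{\Vol(S^{d-1})}{d}\tau(V_-^{d/2})$.

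For the matching lower bound, substitute $h^2 = (1-\sigma)g^2$ and use the symmetric splitting $-g^2\Delta + \rho(W) = \big((1-\sigma)(-g^2\Delta) + \rho(V)\big) + \big(\sigma(-g^2\Delta) + \rho(W - V)\big)$, so that \eqref{form_subadditivity} gives $N(0, -h^2\Delta + \rho(V)) = N\big(0, (1-\sigma)(-g^2\Delta) + \rho(V)\big) \geq N(0, -g^2\Delta + \rho(W)) - N\big(0, -g^2\Delta + \frac{1}{\sigma}\rho(W - V)\big)$. Multiplying through by $h^d = (1-\sigma)^{d/2}g^d$, taking $\liminf_{h \downarrow 0}$ (so that $g \downarrow 0$), evaluating the first term by Corollary \ref{scwl_torus_continuous} and the second by the Cwikel--Lieb--Rozenblum bound, and then sending $\varepsilon \downarrow 0$ and $\sigma \downarrow 0$ produces $\liminf_{h \downarrow 0} h^d N(0, -h^2\Delta + \rho(V)) \geq \frac{\Vol(S^{d-1})}{d}\tau(V_-^{d/2})$. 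Combined with the upper bound this settles the case $\lambda = 0$, and the general statement follows by applying it to $V - \lambda$.

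The main obstacle is not the approximation scheme itself, which is routine once the ingredients are assembled, but the functional-analytic bookkeeping needed to make it rigorous: one must verify that $-h^2\Delta + \rho(V)$ and all the operators appearing in the splittings are well-defined self-adjoint operators, bounded below, with purely discrete spectrum in $(-\infty, 0)$ (so that the counting functions $N(0, \cdot)$ are finite), that \eqref{form_subadditivity} genuinely applies to these quadratic-form sums, and, most importantly, that the Cwikel--Lieb--Rozenblum estimate from \cite{MP2021} is available precisely in the form $N(0, -h^2\Delta + \rho(U)) \lesssim_d h^{-d}\tau(U_-^{d/2})$ for $U = U^* \in L_{d/2}(\Circ^d_\theta)$, $d > 2$. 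Granting these inputs, which are exactly what \cite{MP2021} supplies, the remaining estimates are the elementary ones indicated above.
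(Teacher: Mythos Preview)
Your proposal is correct and follows essentially the same route as the paper: reduce to $\lambda=0$, approximate $V$ in $L_{d/2}$ by a continuous potential, split the kinetic term, and control the main piece by Corollary~\ref{scwl_torus_continuous} and the remainder by the CLR bound from \cite{MP2021}. The only cosmetic differences are that the paper couples your two parameters $\sigma$ and $\varepsilon$ into a single one (choosing $\|V-V_\varepsilon\|_{d/2}\leq \varepsilon^2$ so that the error term $\varepsilon^{-d/2}\|V-V_\varepsilon\|_{d/2}^{d/2}\leq \varepsilon^{d/2}$ still vanishes) and dispatches the lower bound with ``following a similar argument,'' whereas you spell it out.
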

\begin{proof}
    Replacing $V$ by $V-\lambda$ if necessary, it suffices to take $\lambda=0.$
    The Cwikel--Lieb--Rozenblum inequality from \cite[Corollary 8.7]{MP2021} implies that if $V =V^*\in L_{\frac{d}{2}}(\Circ^d_\theta),$ then
    \begin{equation}\label{clr_estimate}
        \limsup_{h\downarrow 0} h^dN(0,-h^2\Delta+\rho(V)) \leq c_{d}\|V_-\|_{\frac{d}{2}}^{\frac{d}{2}}.
    \end{equation}
    where $c_{d}>0$ is a constant. Since $C(\Circ^d_\theta)$ is dense in $L_{\frac{d}{2}}(\Circ^d_\theta),$ for every $0 < \varepsilon < 1,$ we can select $V_{\varepsilon}=V_{\varepsilon}^*\in C(\Circ^d_\theta)$ be such that $\|V-V_{\varepsilon}\|_{\frac{d}{2}} \leq \varepsilon^{2}.$
    Note that this implies
    \[
        \lim_{\varepsilon\to 0} \tau((V_{\varepsilon})_-^{\frac{d}{2}}) = \tau(V_-^{\frac{d}{2}}).
    \]
    
    Using the decomposition 
    \[
        -h^2\Delta+\rho(V) = (-h^2(1-\varepsilon)\Delta+\rho(V_{\varepsilon}))+(-h^2\varepsilon\Delta+\rho(V-V_{\varepsilon}))
    \]
    and \eqref{form_subadditivity} yields
    \[
        N(0,-h^2\Delta+\rho(V)) \leq N(0,-h^2(1-\varepsilon)\Delta+\rho(V_{\varepsilon}))+N(0,-h^2\varepsilon\Delta+\rho(V-V_{\varepsilon})).
    \]
    Applying Corollary \ref{scwl_torus_continuous} to the first term and \eqref{clr_estimate} to the second term gives the inequality
    \begin{align*}
        \limsup_{h\downarrow 0} h^dN(0,-h^2\Delta+\rho(V)) &\leq (1-\varepsilon)^{-\frac{d}{2}}\frac{\Vol(S^{d-1})}{d}\tau((V_{\varepsilon})_-^{\frac{d}{2}})+\varepsilon^{-{\frac{d}{2}}}c_d\|(V-V_\varepsilon)_-\|_{\frac{d}{2}}^{\frac{d}{2}}\\
                                                           &\leq (1-\varepsilon)^{-\frac{d}{2}}\frac{\Vol(S^{d-1})}{d}\tau((V_{\varepsilon})_-^{\frac{d}{2}})+c_d\varepsilon^{\frac{d}{2}}.
    \end{align*}
    Since $\varepsilon>0$ is arbitrary, we may take $\varepsilon\to 0$ to arrive at
    \[
        \limsup_{h\downarrow 0} h^dN(0,-h^2\Delta+\rho(V)) \leq \frac{\Vol(S^{d-1})}{d}\tau(V_-^{\frac{d}{2}}).
    \]
    Following a similar argument yields
    \[
        \liminf_{h\downarrow 0} h^dN(0,-h^2\Delta+\rho(V)) \geq \frac{\Vol(S^{d-1})}{d}\tau(V_-^{\frac{d}{2}})
    \]
    which completes the proof.
\end{proof}
This provides an affirmative answer to the Conjecture 8.8 of \cite{MP2021} in the special case $d>2$ and $p = \frac{d}{2}.$

\end{document}